\RequirePackage{fix-cm}
\documentclass[12pt]{article}       % twocolumn
\usepackage{graphicx}
\usepackage{mathptmx}      % use Times fonts if a\newcommand{\tHP}{\tilde {\mathcal H}_P}vailable on your TeX system
%
% insert here the call for the packages your document requires
%\usepackage{latexsym}
% etc.
%
% please place your own definitions here and don't use \def but
% \newcommand{}{}
%
% Insert the name of "your journal" with
% \journalname{myjournal}
%

\usepackage{multirow}
\usepackage{makecell}
\usepackage{pdflscape}
\usepackage{tikz}
\usetikzlibrary{cd}
\usepackage{multicol}
\usepackage{amssymb,amsmath,amsthm}
\newcommand{\R}{\mathbb{R}}
\newcommand{\Z}{\mathbb{Z}}
\newcommand{\RP}{{\mathcal R}_P}
\newcommand{\PP}{\mathbb{P}}
\newcommand{\HP}{{\mathcal H}_P}
\newcommand{\RPc}{{\mathcal R}_{P,c}}
\newcommand{\RPcs}{{\mathcal R}_{P,s,c}}
\newcommand{\A}{\mathbb{A}}
\newcommand{\tHP}{\tilde {\mathcal H}_P}
\newcommand{\RF}{{\mathcal R}_F}

\allowdisplaybreaks
\usepackage{geometry}
\geometry{a4paper, top=20mm, left=20mm, right=20mm, bottom=20mm}

\numberwithin{equation}{section} 
\newtheorem{theorem}{Theorem}[section] 
\newtheorem{proposition}[theorem]{Proposition} 
\newtheorem{lemma}[theorem]{Lemma} 
\newtheorem{corollary}[theorem]{Corollary}

\newtheorem{problem}{Problem} 
\theoremstyle{definition}

\newtheorem{example}[theorem]{Example}
\newtheorem{remark}[theorem]{Remark} 
\theoremstyle{remark}
\newtheorem{claimnumbered}{Claim}

\begin{document}

\title{Projective geometry of Wachspress coordinates%\thanks{Grants or other notes
%about the article that should go on the front page should be
%placed here. General acknowledgments should be placed at the end of the article.}
}

%\titlerunning{Short form of title}        % if too long for running head

\author{Kathl\'e{}n Kohn         \and
        Kristian Ranestad %etc.
}

\maketitle

\begin{abstract}
We show that there is a unique hypersurface of minimal degree passing through the non-faces of a polytope which is defined by a simple hyperplane arrangement.
This generalizes the construction of the adjoint curve of a polygon by Wachspress in 1975.
The defining polynomial of our adjoint hypersurface is the adjoint polynomial introduced by Warren in 1996.
This is a key ingredient for the definition of Wachspress coordinates, which are barycentric coordinates on an arbitrary convex polytope.
The adjoint polynomial also appears both in algebraic statistics, when studying the moments of uniform probability distributions on polytopes,
and in intersection theory, when computing Segre classes of monomial schemes. 
We describe the Wachspress map, the rational map defined by the Wachspress coordinates, and the Wachspress variety, the image of this map.
The inverse of the Wachspress map is the projection from the linear span of the image of the adjoint hypersurface.
To relate adjoints of polytopes to classical adjoints of divisors in algebraic geometry,
we study irreducible hypersurfaces that have the same degree and multiplicity along the non-faces of a polytope as its defining hyperplane arrangement.
We list all finitely many combinatorial types of polytopes in dimensions two and three for which such irreducible hypersurfaces exist. 
In the case of polygons, the general such curves< are elliptic.
In the three-dimensional case, the general such surfaces are either K3 or elliptic.
\end{abstract}

\section{Introduction}
Barycentric coordinates on convex polytopes have many applications, such as mesh parameterizations in geometric modelling, deformations in computer graphics, or polyhedral finite element methods.
Whereas barycentric coordinates are uniquely defined on simplices, there are different versions of barycentric coordinates on more general convex polytopes.
For instance, \emph{mean value coordinates} and \emph{Wachspress coordinates} are both commonly used in practice. 
A nice overview on different versions of barycentric coordinates, their history and their applications is~\cite{Flo15}.

This article provides an algebro-geometric study of Wachspress coordinates,
which were first introduced for polygons by Wachspress~\cite{Wachs75} and later generalized to higher dimensional convex polytopes by Warren~\cite{War96}.
Wachspress defined the \emph{adjoint curve} of a polygon 
as the minimal degree curve passing through the intersection points of pairs of lines containing non-adjacent edges of the polygon;
see Figure~\ref{fig:wachspress}.
Warren defined the \emph{adjoint polynomial} for any convex polytope $P$ in $\R^n$:
he first fixes a triangulation $\tau(P)$ of $P$ into simplices such that the vertices of each occurring simplex $\sigma$ are vertices of $P$,
and then defines the \emph{adjoint} to be the polynomial
\begin{equation}
\label{eq:warrenAdjoint}
	\mathrm{adj}_{\tau(P)}(t) := \sum_{\sigma \in \tau(P)} \mathrm{vol}(\sigma) \prod_{v \in V(P) \setminus V(\sigma)} \ell_v(t),
\end{equation}
where $t = (t_1, t_2, \ldots, t_n)$, the set of vertices of a polytope is denoted by $V(\cdot)$, and $\ell_v(t)$ denotes the linear form $1-v_1t_1-v_2t_2 - \ldots - v_nt_n$ associated to a vertex $v$.
Warren shows that the adjoint is independent of the chosen triangulation, so we set $\mathrm{adj}_{P} := \mathrm{adj}_{\tau(P)}$.
Moreover, he observes that the adjoint of $P$ vanishes on the intersections of pairs of hyperplanes containing non-adjacent facets of the dual polytope $P^\ast$.
If $P$ is a polygon, this implies that $\mathrm{adj}_P$ coincides with Wachspress' adjoint of the dual polygon $P^\ast$.
Finally, Warren defines the \emph{Wachspress coordinates} of a polytope $P$ as follows:
\begin{equation}
\label{eq:barycentric}
	\forall u \in V(P) : \quad \beta_u(t) := \frac{\mathrm{adj}_{F_u}(t) \cdot \prod \limits_{F \in \mathcal{F}(P) : \, u \notin F} \ell_{v_F}(t) }{\mathrm{adj}_{P^\ast}(t)},
\end{equation}
where $\mathcal{F}(P)$ denotes the set of facets of $P$,
$F_v$ denotes the facet of the dual polytope $P^\ast$ corresponding to the vertex $v \in V(P)$,
and $v_F$ denotes the vertex of the dual polytope $P^\ast$ corresponding to the facet $F \in \mathcal{F}(P)$.

Warren's adjoint also appears in algebraic statistics and intersection theory.
\begin{figure}
\centering
\includegraphics[width=0.33\textwidth]{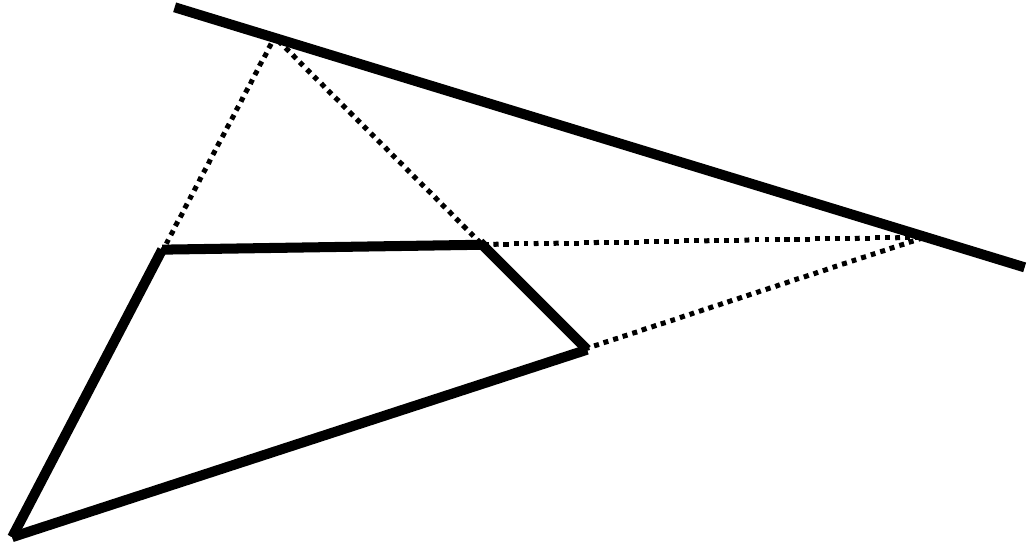}
~~~~~~~~~~
\includegraphics[width=0.33\textwidth]{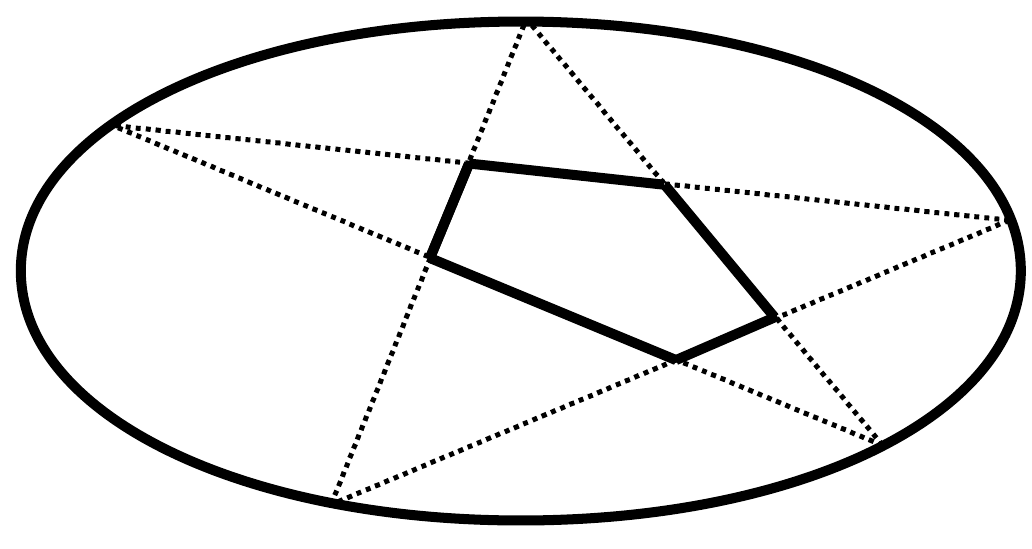}
\caption{Adjoint curves of a quadrangle and a pentagon.}
\label{fig:wachspress}
\end{figure}

\paragraph{\textbf{Intersection theory.}} 
The adjoint is the central factor in Segre classes of monomial schemes.
This was observed by Aluffi~\cite{aluffi1,aluffi2}, but without the language of adjoints. 
To explain Aluffi's setup, we consider a smooth variety $V$ with smooth hypersurfaces $X_1, X_2, \ldots, X_n \subset V$ meeting with normal crossings in 
$V$.
For an index tuple $I = (i_1, i_2, \ldots, i_n) \in \Z_{\geq 0}^n$,
we write $X^I$ for the hypersurface obtained by taking $X_{j}$ with multiplicity $i_j$.
Any finite subset $\mathcal{A} \subset \Z_{\geq 0}^n$ of index tuples defines a \emph{monomial subscheme} $S_\mathcal{A} := \bigcap_{I \in \mathcal{A}} X^I$
as well as a \emph{Newton region} $N_\mathcal{A}$ which is the complement in $\R^n_{\geq 0}$ of the convex hull of the positive orthants
translated at the points in $\mathcal{A}$,
i.e., $N_\mathcal{A} = \R^n_{\geq 0} \setminus \mathrm{convHull} \left(\bigcup_{I \in \mathcal{A}} (\R^n_{>0} + I) \right)$.
From Aluffi's results~\cite{aluffi1,aluffi2} we deduce the following in Section~2:
\begin{proposition}
\label{prop:segreClass}
If the Newton region $N_\mathcal{A}$ is finite, the Segre class of the monomial subscheme $S_\mathcal{A}$ in the Chow ring of $V$ is
\begin{equation}
\label{eq:segreClass}
\frac{n! \cdot X_1 \cdot X_2 \cdots X_n \cdot\mathrm{adj}_{N_\mathcal{A}}(-X_1, -X_2, \ldots, -X_n)}{\prod \limits_{v \in V(N_\mathcal{A})} \ell_v(-X_1, -X_2, \ldots, -X_n)}.
\end{equation}
\end{proposition}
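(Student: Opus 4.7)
The plan is to translate Aluffi's computation of Segre classes of monomial schemes from \cite{aluffi1,aluffi2} into the language of Warren's adjoint polynomial, using the defining formula~\eqref{eq:warrenAdjoint}. The key observation is that Aluffi's formula admits a triangulation-based form whose terms match exactly, one by one, the simplex contributions in Warren's sum.

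First, I would extract from \cite{aluffi1,aluffi2} the following combinatorial form of the Segre class: after fixing any triangulation $\tau(N_\mathcal{A})$ of the (bounded) Newton region into simplices whose vertices lie in $V(N_\mathcal{A})$, Aluffi's expression can be rewritten as
\[
s(S_\mathcal{A},V) \;=\; \sum_{\sigma \in \tau(N_\mathcal{A})} n! \cdot \mathrm{vol}(\sigma) \cdot \frac{X_1 \cdot X_2 \cdots X_n}{\prod_{v \in V(\sigma)} \ell_v(-X_1,-X_2,\ldots,-X_n)},
\]
where each vertex $v = (v_1, \ldots, v_n) \in V(N_\mathcal{A})$ contributes the linear form $\ell_v(-X_1, \ldots, -X_n) = 1 + v_1 X_1 + \cdots + v_n X_n$. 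This linear form is precisely the total Chern class of the normal bundle appearing after blowing up along the stratum cut out by the monomials at the vertex~$v$; the volume factor $n! \cdot \mathrm{vol}(\sigma)$ arises from integrating over the simplex $\sigma$ (equivalently, from the pushforward of the fundamental class in Aluffi's blow-up computation).

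Second, I would put the sum over a common denominator. Multiplying numerator and denominator of each summand by $\prod_{v \in V(N_\mathcal{A}) \setminus V(\sigma)} \ell_v(-X_1,\ldots,-X_n)$ collects the whole expression into
\[
s(S_\mathcal{A},V) \;=\; \frac{n! \cdot X_1 \cdot X_2 \cdots X_n \cdot \displaystyle\sum_{\sigma \in \tau(N_\mathcal{A})} \mathrm{vol}(\sigma) \prod_{v \in V(N_\mathcal{A}) \setminus V(\sigma)} \ell_v(-X_1,\ldots,-X_n)}{\prod_{v \in V(N_\mathcal{A})} \ell_v(-X_1,\ldots,-X_n)}.
\]
By Warren's formula~\eqref{eq:warrenAdjoint} applied to the polytope $N_\mathcal{A}$, the numerator sum is exactly $\mathrm{adj}_{N_\mathcal{A}}(-X_1,-X_2,\ldots,-X_n)$, which yields~\eqref{eq:segreClass}. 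A pleasant byproduct is that Warren's theorem that the adjoint is independent of the chosen triangulation matches, and reproves, the well-definedness of Aluffi's Segre class computation.

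The main obstacle is the first step: Aluffi's results are phrased in terms of iterated blowups of $V$ along the strata of the normal crossings divisor $X_1 \cup \cdots \cup X_n$ and Chern classes of exceptional divisors, rather than directly as a sum over a triangulation of $N_\mathcal{A}$. I would therefore need to verify carefully that the contribution of each maximal stratum in Aluffi's resolution, as the blowup centers are chosen compatibly with a triangulation of $N_\mathcal{A}$ by lattice simplices with apex at the origin (or any fixed interior vertex), produces exactly the summand displayed above, with the correct factor $n! \cdot \mathrm{vol}(\sigma)$ and the correct linear forms $\ell_v(-X_1,\ldots,-X_n)$ attached to the vertices of $\sigma$. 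Once this dictionary between Aluffi's resolution data and the combinatorics of $N_\mathcal{A}$ is set up, the rest of the argument is the purely formal manipulation above.
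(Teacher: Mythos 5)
Your second step---putting the per-simplex terms over the common denominator $\prod_{v \in V(N_\mathcal{A})} \ell_v(-X)$ and recognizing Warren's formula~\eqref{eq:warrenAdjoint}---is exactly the computation in the paper, so the endgame is fine. The issue is the first step, which you correctly identify as the crux and then leave unproven. The route you sketch for it is also not the natural one: Aluffi's computation is not organized as a sequence of blowups whose centers can be ``chosen compatibly with a triangulation of $N_\mathcal{A}$ by lattice simplices with apex at the origin''; the blowup/resolution data follows the strata of the normal crossings divisor and the Newton subdivision, not an arbitrary triangulation of the Newton region, and the origin need not even be a vertex of $N_\mathcal{A}$. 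Establishing your claimed ``dictionary'' between exceptional-divisor contributions and the simplices of a chosen triangulation would require genuinely new work, and as written the proposal does not supply it.

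The gap closes immediately if you use the right statement of Aluffi's: in \cite{aluffi2} (Thm.~1.1) the Segre class is expressed directly as the integral $\int_{N_\mathcal{A}} \frac{n!\,X_1\cdots X_n}{(1+w_1X_1+\cdots+w_nX_n)^{n+1}}\,dw$ over the Newton region, and Lemma~2.5 of the same paper evaluates this integral over a simplex $\sigma$ with finite vertices as $\frac{n!\,\mathrm{vol}(\sigma)\,X_1\cdots X_n}{\prod_{v\in V(\sigma)}\ell_v(-X)}$, which is precisely the summand you want. Choosing any triangulation of $N_\mathcal{A}$ that introduces no new vertices, additivity of the integral then gives your displayed sum with no blowup bookkeeping at all; this is how the paper proceeds. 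So the proposal is salvageable, but as it stands the step you call the ``main obstacle'' is exactly the part that needs either this citation or a substantial argument you have not given.
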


\begin{remark}
If $N_\mathcal{A}$ has vertices at infinity, these are points at infinity in the direction of the standard basis vectors $e_1, e_2, \ldots, e_n$.
The Segre class of  $S_\mathcal{A}$ is the limit of~\eqref{eq:segreClass}.
This limit is a rational function of the same form as~\eqref{eq:segreClass}
where the linear form $\ell_{v_i}$ associated to a vertex $v_i$ at infinity in the direction of $e_i$ is $\ell_{v_i}(t) = -t_i$;
see Remark~\ref{rem:infiniteNewtonRegionExtended}.
 \hfill$\diamondsuit$
\end{remark}

\begin{multicols}{2}
\begin{example}
We are using the running example in~\cite{aluffi1}.
Here $n = 2$ and $\mathcal{A} = \lbrace (2,6), (3,4), (4,3), (5,1), (7,0) \rbrace$.
The Newton region $N_\mathcal{A}$ is a hexagon with a vertex at infinity in the direction of the second standard basis vector. 
We will see in Remark~\ref{rem:infiniteNewtonRegionExtended}
that the Segre class of the monomial scheme $S_\mathcal{A}$ is
\end{example}
\columnbreak
\centering
\includegraphics[width=0.3\textwidth]{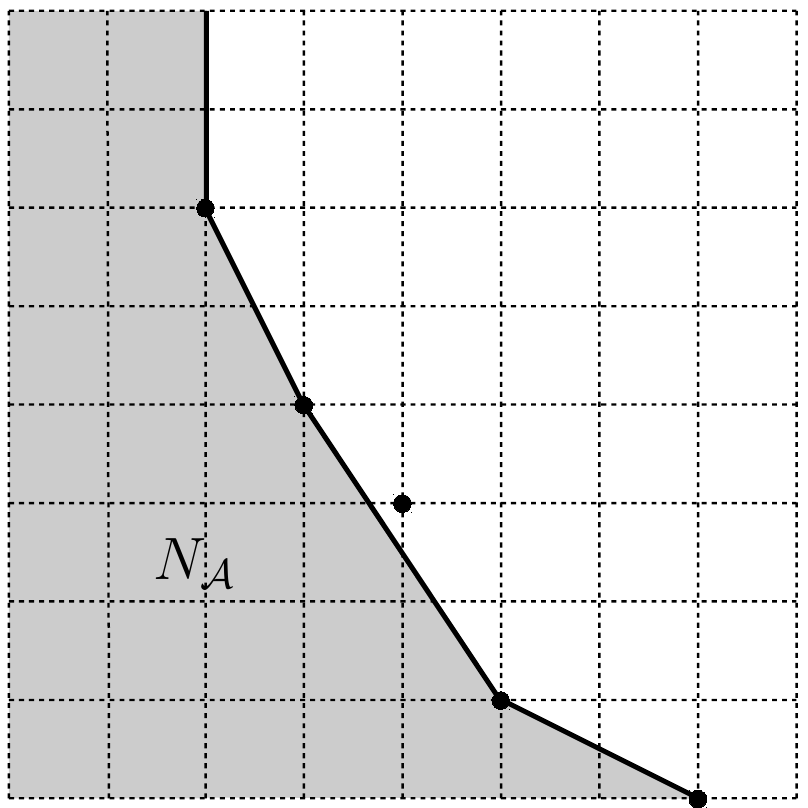}
\end{multicols}
\vspace*{-6mm}
$$
\frac{2X_1X_2 \, \mathrm{adj}_{N_\mathcal{A}}(-X_1,-X_2) }{X_2(1+2X_1+6X_2)(1+3X_1+4X_2)(1+5X_1+X_2)(1+7X_1)},
\quad\text{ where }
$$
$$
\hspace*{13mm}
\mathrm{adj}_{N_\mathcal{A}}(t) = 1-15  t_1 -22 t_2 +71 t_1^2+212 t_1 t_2+95  t_2^2 -105 t_1^3 -476 t_1^2 t_2-511 t_1 t_2^2-84 t_2^3. 
\hspace*{12mm}
\diamondsuit
$$

\paragraph{\textbf{Algebraic statistics.}}
Here the adjoint appears when studying the moments of the uniform probability distribution on a polytope.
For a probability distribution $\mu$ on $\R^n$ and an index tuple $I = (i_1, i_2, \ldots, i_n) \in \Z_{\geq 0}^n$, the \emph{$I$-th moment of $\mu$} is
$$
m_I(\mu) := \int_{\R^n} w_1^{i_1} w_2^{i_2} \ldots w_n^{i_n} d \mu.
$$
If $\mu_P$ is the uniform probability distribution on a polytope $P$, we shortly write
 $m_I(P):=m_I(\mu_P)$.
A suitably normalized generating function over all moments of a fixed simplicial polytope $P$ in $\R^n$ with $d$ vertices $v_1, v_2, \ldots, v_d$ is a rational function whose numerator is Warren's adjoint of $P$~\cite{moments}:
\begin{equation*}
\sum_{I \in \Z_{\geq 0}^n} c_I \, m_I(P) \, t^I = \frac{\mathrm{adj_P(t)}}{\mathrm{vol}(P) \prod \limits_{i=1}^d \ell_{v_i}(t)}, \quad\quad\quad \text{where } c_I := \binom{i_1 + i_2 + \ldots + i_n + n}{i_1, i_2, \ldots, i_n, n}.
\end{equation*}
In~\cite[p. 6]{moments} it was also observed that the adjoint of the simplicial polytope $P$ vanishes on all linear spaces
which do not contain faces of the dual polytope $P^\ast$ but are intersections of hyperplanes spanned by facets of $P^\ast$.
Moreover, the authors conjectured that the adjoint is uniquely characterized by this vanishing property.  In this article, we prove this conjecture. 

\paragraph{\textbf{Unique adjoint hypersurfaces.}}
We generalize Wachspress' construction of the adjoint from polygons to %simple 
polytopes
%(recall that a polytope is simple if and only if its dual polytope is simplicial)
and show that our definition of the adjoint coincides with Warren's definition.
In particular, we answer a question by Wachspress~\cite{Wachs11}:
in 2011 he asked for a geometric construction of the unique adjoint associated to a tesseract (i.e., a four-dimensional hypercube); see Example~\ref{ex:tess}.

To avoid dealing with degenerate situations such as parallel faces, we typically work in complex projective $n$-space $\PP^n$.
With a polytope $P$ in $\PP^n$ we denote the collection of the Zariski closures of the faces of a convex polytope in $\R^n$.
If the polytope is full-dimensional, 
the union of these projective subspaces is a hyperplane arrangement $\HP$. 
We study the \emph{residual arrangement} $\RP$ of $P$, which consists of all linear spaces
that are intersections of hyperplanes in $\HP$ and do not contain any face of $P$.

\begin{example}\label{firstex}
If $P$ is a polygon in the plane with $d$ edges, the residual arrangement $\RP$ consists of $\binom{d}{2}-d$ points. 
These are exactly the intersection points of the extended edges outside of the polygon $P$, as studied by Wachspress.

If $P$ is a triangular prism (i.e. the second polytope in Table~\ref{tab:polytopes}),
the planes spanned by the two triangle facets of $P$ intersect in a line
and the planes spanned by the three quadrangle facets intersect in a point. 
The residual arrangement consists of this line and this point.

If $P$ is a cube, each pair of planes spanned by opposite facets of the cube intersects in a line. 
Hence, the residual arrangement $\RP$ consists of three lines.
If $P$ is a regular cube, the three lines lie in a common plane (the ``plane at infinity'').
If the cube $P$ is perturbed, the three lines are skew.
\hfill$\diamondsuit$
\end{example}

The concept of residual arrangements allows us to generalize Wachspress' adjoint. 
To be more precise we prove the following assertion in Section~\ref{sec:adjoint} (see Theorem~\ref{thm:adjoint}).

\begin{theorem}
\label{thm:adjointIntro}
Let $P$ be a full-dimensional polytope in $\PP^n$ with $d$ facets.
If the hyperplane arrangement $\HP$ is simple (i.e. through any point in $\PP^n$ pass at most $n$ hyperplanes in $\HP$),
there is a unique hypersurface $A_P$ in $\PP^n$ of degree $d-n-1$
which vanishes along the residual arrangement $\RP$.
\end{theorem}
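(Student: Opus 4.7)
My plan is to prove existence by exhibiting Warren's adjoint polynomial $\mathrm{adj}_{P^*}$ and verifying it vanishes on $\RP$, and to prove uniqueness by induction on the dimension $n$. For existence, since $P$ is simple, the dual polytope $P^*$ is simplicial, so Warren's formula~\eqref{eq:warrenAdjoint} is directly applicable; it yields a polynomial of degree $|V(P^*)| - n - 1 = d - n - 1$, whose linear-form factors are (up to scaling) the defining equations $\ell_F$ of the facet hyperplanes of $P$. Warren himself observes vanishing on the codimension-two residual components; the higher-codimension vanishing on all $L \in \RP$ follows by a parallel argument that exploits the independence of the formula from the triangulation together with the fact that $L$ is contained in no face of $P$.

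Uniqueness: I induct on $n$, with trivial base. Let $A$ be a hypersurface of degree $d - n - 1$ vanishing on $\RP$, and fix a facet $F$ of $P$ with supporting hyperplane $H_F \cong \PP^{n-1}$. For every facet $F'$ not adjacent to $F$, simplicity of $\HP$ ensures that $H_F \cap H_{F'}$ is a hyperplane of $H_F$ lying in $\RP$, so the linear form $\ell_{F'}|_{H_F}$ divides $A|_{H_F}$. Factoring these $d-1-d_F$ forms out, where $d_F$ denotes both the number of facets of $P$ adjacent to $F$ and the number of facets of $F$ as a polytope in $H_F$, gives
\begin{equation*}
A|_{H_F} \;=\; B_F \cdot \prod_{F' \not\sim F} \ell_{F'}|_{H_F}, \qquad \deg B_F \;=\; d_F - n.
\end{equation*}
A short check using simplicity shows $B_F$ vanishes on the residual arrangement $\RF$ of $F$ inside $H_F$: any $L' \in \RF$ lies in $\RP$ as well, while simplicity forbids $L'$ from being contained in any $H_{F'}$ for $F' \not\sim F$, so the product factor cannot absorb the vanishing. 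By the inductive hypothesis $B_F$ is a scalar multiple of the adjoint $A_F$ of $F$, so $A|_{H_F}$ is determined up to a scalar $\mu_F$.

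Given two such hypersurfaces $A_1, A_2$ with $A_1|_{H_F} = \mu_F A_2|_{H_F}$ for every facet $F$, I propagate equality of the scalars through the facet adjacency graph of $P$: for adjacent facets $F, F'$ meeting in the codimension-two face $G = F \cap F'$, evaluating both restriction relations at a general point of $G$ forces $\mu_F = \mu_{F'}$. Since the adjacency graph of a convex polytope is connected, all $\mu_F$ coincide, and after rescaling $A_1 - A_2$ vanishes on every $H_F$; then all $d$ linear forms $\ell_F$ divide $A_1 - A_2$, whose degree $d - n - 1 < d$ forces $A_1 = A_2$. The main obstacle is the non-vanishing input $A_2|_G \not\equiv 0$ needed to equate the scalars: the simplicity check above shows $\ell_{F''}|_G \not\equiv 0$ for $F'' \not\sim F$, so via the factorization the issue reduces to $B_F|_G \not\equiv 0$, i.e.\ the adjoint $A_F$ does not vanish identically on a facet of $F$. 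I would handle this by strengthening the induction hypothesis to include exactly this non-degeneracy statement, and then verifying it for the $n$-dimensional adjoint at the end of the inductive step, arguing that if $A_P$ vanished identically on a facet $F$ then $\ell_F$ would divide $A_P$, producing a polynomial of degree $d - n - 2$ still vanishing on a sufficiently large portion of $\RP$ to contradict a degree count.
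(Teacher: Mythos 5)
Your route is genuinely different from the paper's: the paper proves existence and uniqueness simultaneously by a double induction on $n$ and $d$, removing one facet hyperplane $H$ and using the exact sequence $0 \to \mathcal{I}_{\mathcal{R}_Q \cup\,\mathcal{C}}(d-n-2) \to \mathcal{I}_{\RP}(d-n-1) \to \mathcal{I}_{(\RP)|_H}(d-n-1) \to 0$ together with $h^0=h^1=0$ for the kernel, and it strengthens the inductive statement to include $h^1(\mathcal{I}_{\RP}(d-n-1))=0$ and the fact that $A_P$ misses every vertex of $P$. Your facet-restriction and scalar-propagation scheme could in principle replace that, but as written it has genuine gaps. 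First, the claim that ``simplicity forbids $L'$ from being contained in any $H_{F'}$ for $F' \not\sim F$'' only follows from the at-most-$n$-hyperplanes-through-a-point condition when $L'$ is a point: if $L'\in\RF$ has codimension $c$ in $H_F$, containment in $H_{F'}$ puts $c+2$ hyperplanes through its points, which is perfectly consistent with simplicity as soon as $c+2\le n$. So for $n\ge 4$ a positive-dimensional component of $\RF$ may lie in a non-adjacent facet hyperplane, the product factor can then absorb the vanishing, and you cannot conclude that $B_F$ vanishes on $\RF$, hence cannot identify $B_F$ with $A_F$. (Your argument is fine for $n=2,3$, where all components of $\RF$ are points, but the theorem is for all $n$.)

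Second, the non-degeneracy you yourself flag as the main obstacle is not actually established. The scalar propagation needs both $A_2|_{H_F}\not\equiv 0$ (so that $\mu_F$ is defined) and $A_F|_G\not\equiv 0$; the former is an $n$-dimensional non-vanishing statement that you only propose to prove \emph{at the end} of the same inductive step, and the proposed proof is an unspecified ``degree count'' — it is not clear that a polynomial of degree $d-n-2$ cannot vanish on the part of $\RP$ away from $H_F$ without further work; this is exactly the point where the paper needs its stronger inductive statement (the adjoint avoids all vertices of $P$), which it proves via the cohomological machinery, not by a naive count. As set up, your argument is circular at this spot. Finally, your existence step leans on the assertion that Warren's adjoint vanishes on all of $\RP$ (not merely on the codimension-two part Warren treats); this is Proposition~\ref{prop:WarrenCompare} in the paper and requires a genuine induction using Warren's identity~\eqref{eq:warrenLinear}, so ``a parallel argument'' needs to be spelled out. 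Note also that the paper's Theorem~\ref{thm:adjoint} does not need Warren's polynomial at all: existence falls out of the same exact-sequence computation, since $h^1$ of the kernel vanishes and the restricted system on $H$ is nonempty.
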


If $\HP$ is simple, we call $A_P$ the \emph{adjoint} of the polytope $P$.
Another way of stating this theorem is:
for a polytope $P$ with a simple hyperplane arrangement $\HP$, the ideal sheaf $\mathcal{I}_{\RP}$ of the residual arrangement twisted by $d-n-1$ has a unique global section up to scaling with constants,
i.e. $h^0 (\PP^n, \mathcal{I}_{\RP}(d-n-1)) = 1$.

\begin{example}
\label{ex:firstAdjoints}
If $P$ is a polygon in the plane, then the adjoint $A_P$ is exactly the adjoint curve described by Wachspress.
If $P$ is a triangular prism, then the adjoint $A_P$ is the plane spanned by the line and the point in the residual arrangement $\RP$.

If $P$ is a perturbed cube such that the three lines in the residual arrangement $\RP$ are skew, then the adjoint $A_P$ is the unique quadric passing through the three lines.
If the cube $P$ is regular and the three lines lie in a common plane, then the adjoint $A_P$ degenerates to that plane doubled.
However, in this case the plane arrangement $\HP$ is not simple and %the adjoint is not unique:
there is a three-dimensional family of quadrics passing through the three lines, 
as such a quadric consists of the common plane of the three lines together with any other plane.
\hfill$\diamondsuit$
\end{example}

To compare Warren's adjoint with our definition, we recall Warren's vanishing observation stated above:
his adjoint $\mathrm{adj}_P$ of a polytope $P$ vanishes on the codimension two part of the residual arrangement $\mathcal{R}_{P^\ast}$ of the dual polytope $P^\ast$~\cite[Thm.~5]{War96}.
We generalize this assertion to the whole residual arrangement (for a proof, see Section~\ref{sec:adjoint}).

\begin{proposition}
\label{prop:WarrenCompare}
For a full-dimensional convex polytope $P$ in $\R^n$, Warren's adjoint $\mathrm{adj}_P$ vanishes along the residual arrangement $\mathcal{R}_{P^\ast}$ of the dual polytope $P^\ast$.
In particular, for a simple hyperplane arrangement $\mathcal{H}_{P^\ast}$, the zero locus of the adjoint polynomial $\mathrm{adj}_P$ is the adjoint hypersurface~$A_{P^\ast}$.
\end{proposition}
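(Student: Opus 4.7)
The plan is to combine the triangulation-invariance of Warren's adjoint with a careful choice of triangulation tailored to each $L \in \mathcal{R}_{P^\ast}$. The goal is to make every summand of \eqref{eq:warrenAdjoint} individually vanish on $L$ by choosing a triangulation whose simplices all miss at least one vertex among those cutting out $L$.

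First I would translate membership in $\mathcal{R}_{P^\ast}$ into combinatorial data on $V(P)$. The hyperplanes of $\mathcal{H}_{P^\ast}$ are exactly $\{\ell_v = 0\}_{v \in V(P)}$, so any intersection of these hyperplanes has the form $L = \bigcap_{v \in S}\{\ell_v = 0\}$ where $S := \{v \in V(P) : \ell_v|_L \equiv 0\}$ is taken maximal. Because $P^\ast \cap \{\ell_v = 0\}$ is the facet $F^\ast_v$ of $P^\ast$, and by the duality of face lattices $F^\ast_v$ contains the face of $P^\ast$ dual to a face $F$ of $P$ exactly when $v \in V(F)$, we see that $L$ contains this dual face iff $S \subseteq V(F)$. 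Hence $L \in \mathcal{R}_{P^\ast}$ iff $S$ is not contained in the vertex set of any facet of $P$.

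Now fix such an $L$ with its set $S$. If $|S| > n+1$, the vanishing is automatic: every simplex $\sigma$ of every triangulation satisfies $|V(\sigma)| = n+1 < |S|$, so some $v \in S \setminus V(\sigma)$ appears in the product defining the $\sigma$-summand of \eqref{eq:warrenAdjoint} and contributes $\ell_v(t) = 0$ at $t \in L$. For $|S| \leq n+1$, I would assume $P$ is not a simplex (otherwise $\mathcal{R}_{P^\ast}$ is empty and the claim is vacuous), pick $v_0 \in V(P) \setminus S$, and evaluate $\mathrm{adj}_P$ using the \emph{pulling triangulation} of $P$ at $v_0$: recursively triangulate each facet of $P$ not containing $v_0$ and cone every resulting simplex from $v_0$. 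Each maximal simplex has the form $\sigma = v_0 \ast \sigma'$ with $V(\sigma') \subseteq V(F)$ for a single facet $F$ not containing $v_0$. If $V(\sigma) \supseteq S$, then $v_0 \notin S$ forces $S \subseteq V(\sigma') \subseteq V(F)$, contradicting our choice of $S$. Every summand of \eqref{eq:warrenAdjoint} therefore carries a factor $\ell_v$ with $v \in S$ that vanishes on $L$, and triangulation-invariance yields $\mathrm{adj}_P|_L \equiv 0$.

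For the second assertion of the proposition, the polynomial $\mathrm{adj}_P$ has degree $|V(P)| - n - 1 = |\mathcal{F}(P^\ast)| - n - 1$, matching the degree of $A_{P^\ast}$; under the simplicity assumption Theorem~\ref{thm:adjointIntro} asserts that such a hypersurface vanishing along $\mathcal{R}_{P^\ast}$ is unique, so the zero locus of $\mathrm{adj}_P$ is $A_{P^\ast}$. The main obstacle I anticipate is the structural claim about pulling triangulations — that every maximal simplex containing $v_0$ has its remaining $n$ vertices inside a single facet avoiding $v_0$ — which is standard in discrete geometry but is the geometric input that makes the termwise vanishing argument work.
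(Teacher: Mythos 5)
Your proposal is correct, but it takes a genuinely different route from the paper's. The paper proves the vanishing by induction on $n$ using Warren's identity \eqref{eq:warrenLinear}: for a component $C$ cut out by the hyperplanes dual to vertices $v^1,\dots,v^c$ of $P$ that do not span a face, each facet summand either contains a factor $\ell_{v^i}$ with $v^i\notin V(F)$, which vanishes on $C$, or else all $v^i$ lie in $V(F)$, which forces $v_F\in C$ and lets the induction hypothesis apply to $\mathrm{adj}_F$ via $C/\R v_F\subset\mathcal{R}_{F^\ast}$. You instead exploit the triangulation-independence of \eqref{eq:warrenAdjoint} (already invoked in the paper to define $\mathrm{adj}_P$) together with a pulling triangulation at a vertex $v_0\notin S$: every maximal simplex is $v_0\ast\sigma'$ with $V(\sigma')$ inside a single facet not containing $v_0$, hence cannot contain $S$, so every summand carries a vanishing factor $\ell_v$ with $v\in S$; the case $|S|>n+1$ is trivial for any triangulation, and $S=V(P)$ can only occur in that case, so a suitable $v_0$ always exists. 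Your dictionary ``$L\in\mathcal{R}_{P^\ast}$ iff $S$ lies in no facet's vertex set'' is the correct dual statement, and only the easy direction is actually used. What each approach buys: yours is non-inductive and more elementary, needing only the defining formula, triangulation-invariance, and the standard structure of pulling triangulations (the one external ingredient you rightly flag); the paper's stays within Warren's algebraic machinery and directly generalizes his own codimension-two argument. Both treatments share the same mild informality about components of $\mathcal{R}_{P^\ast}$ at infinity, where the vanishing should be read on termwise homogenizations, and both conclude the second assertion identically from the degree count and the uniqueness in Theorem~\ref{thm:adjointIntro}.
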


This shows that, for every convex polytope in $\R^n$ with $d$ facets, there is a hypersurface of degree $d-n-1$ passing through the residual arrangement (namely the zero locus of $\mathrm{adj}_{P^\ast}$).
If the hyperplane arrangement $\HP$ in $\PP^n$ is simple, this hypersurface is unique due to Theorem~\ref{thm:adjointIntro}.
Otherwise, this hypersurface might not be unique as Example~\ref{ex:firstAdjoints} demonstrates.
Nevertheless, the adjoint hypersurface is well-defined, independently of the simplicity of the hyperplane arrangement:
we can deduce from
Proposition~\ref{prop:WarrenCompare}
that the adjoint hypersurface of a polytope $P$ with a non-simple hyperplane arrangement
is the unique limit of the adjoint hypersurfaces of all perturbations of $P$ with a simple hyperplane arrangement (for a proof, see Section~\ref{sec:adjoint}).

\begin{corollary}
\label{cor:limit}
Let $P$ be a full-dimensional polytope in $\PP^n$ with $d$ facets.
If $P_t$ and $Q_t$ are continuous families of polytopes with $d$ facets and simple hyperplane arrangements for $t \in (0,1)$
such that $P = \lim \limits_{t \to 0} P_t$ and $P = \lim \limits_{t \to 0} Q_t$,
then the limits $\lim \limits_{t \to 0} A_{P_t}$ and $\lim \limits_{t \to 0} A_{Q_t}$ of their adjoint hypersurfaces coincide.
\end{corollary}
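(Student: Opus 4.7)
The plan is to reduce the geometric statement to continuity of Warren's adjoint polynomial in the vertex coordinates of a polytope, via Proposition~\ref{prop:WarrenCompare}. For each $t \in (0,1)$, simplicity of the hyperplane arrangements of $P_t$ and $Q_t$ gives that $A_{P_t}$ is the zero locus of $\mathrm{adj}_{P_t^\ast}$ and $A_{Q_t}$ is the zero locus of $\mathrm{adj}_{Q_t^\ast}$. It therefore suffices to show that both polynomial families converge to the same nonzero form of degree $d-n-1$ as $t \to 0$.

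The central step is a continuity argument for Warren's adjoint. Formula~\eqref{eq:warrenAdjoint} expresses $\mathrm{adj}_{\tau(R)}$ as a polynomial in $t$ whose coefficients are polynomial in the vertex coordinates of $R$, since each simplex volume is a determinant and each $\ell_v$ is affine in $v$. For any family $R_t$ of polytopes with $d$ labeled vertices varying continuously in $t$, one can partition the parameter interval into finitely many subintervals on each of which the combinatorial type of $R_t$ is constant, fix a single triangulation $\tau$ on each subinterval, and conclude that $\mathrm{adj}_{R_t}$ depends polynomially, hence continuously, on the vertex positions along that subinterval. Warren's triangulation-independence theorem then patches these pieces together continuously across combinatorial walls, since the formulas from the adjacent triangulations must agree at the wall (the simplices that change contribute vanishing volumes in the limit).

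Applying this to $R_t = P_t^\ast$ and $R_t = Q_t^\ast$, both of which are families of polytopes with $d$ vertices converging to $P^\ast$, I obtain
\[
\lim_{t \to 0} \mathrm{adj}_{P_t^\ast} \;=\; \mathrm{adj}_{P^\ast} \;=\; \lim_{t \to 0} \mathrm{adj}_{Q_t^\ast}
\]
as polynomials up to scalar. The form $\mathrm{adj}_{P^\ast}$ is nonzero: evaluating~\eqref{eq:warrenAdjoint} at $t = 0$ yields $\sum_\sigma \mathrm{vol}(\sigma) = \mathrm{vol}(P^\ast) > 0$, so the polynomial is not identically zero and does not degenerate along either family. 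Thus the polynomial convergences correspond to hypersurface convergences in the projective space of degree $d-n-1$ forms on $\PP^n$, and one concludes $\lim_{t \to 0} A_{P_t} = \lim_{t \to 0} A_{Q_t}$, both equal to the zero locus of $\mathrm{adj}_{P^\ast}$.

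The main obstacle is establishing the continuity of $\mathrm{adj}_R$ across changes in the combinatorial type of $R$, because no single triangulation can be valid throughout a family that crosses such a wall. Warren's triangulation-independence theorem is exactly the tool needed here: it ensures that the vertex-polynomial extensions of~\eqref{eq:warrenAdjoint} computed from different triangulations agree at combinatorial transitions, so the whole construction extends to a continuous function of the polytope even where the combinatorial type degenerates.
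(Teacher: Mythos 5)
Your proposal is correct and follows essentially the same route as the paper: it reduces via Proposition~\ref{prop:WarrenCompare} to convergence of Warren's adjoint polynomials and deduces $\lim_{t\to 0}\mathrm{adj}_{P_t^\ast}=\mathrm{adj}_{P^\ast}=\lim_{t\to 0}\mathrm{adj}_{Q_t^\ast}$ from the triangulation formula~\eqref{eq:warrenAdjoint}. The only cosmetic difference is that the paper chooses triangulations varying continuously with the family (converging to a triangulation of $P^\ast$), whereas you work piecewise on parameter ranges of constant combinatorial type and invoke Warren's triangulation independence at transitions; your extra observation that $\mathrm{adj}_{P^\ast}$ is nonzero is a harmless additional check.
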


\begin{remark}
The unique limit in Corollary~\ref{cor:limit} is the adjoint hypersurface of $P$, independently of the simplicity of the hyperplane arrangement $\HP$.
In a real affine chart where $P$ is convex, this is the (possibly non-reduced) zero locus of the adjoint polynomial $\mathrm{adj}_{P^\ast}$.
 \hfill$\diamondsuit$
\end{remark}

\paragraph{\textbf{The Wachspress coordinate map.}}
The study of the map defined by the Wachspress coordinates \eqref{eq:barycentric}, particularly in dimensions two and three, was proposed by Garcia and Sottile~\cite{GarciaSottile}
and Floater and Lai~\cite{FloaterLai}. 
The two-dimensional case was thoroughly analyzed by Irving and Schenck~\cite{IS14}.
We extend their results to three and higher dimensions.
In particular, we will see that already the three-dimensional case is considerably more complicated than the case of polygons. 

The dual polytope $P^\ast$ of a simple polytope $P$ in $\R^n$ is simplicial, so the facet $F_u$ of $P^\ast$ corresponding to a vertex $u$ of $P$ is a simplex.
Thus, the Wachspress coordinates~\eqref{eq:barycentric} reduce in this case to
\begin{equation}
\label{eq:barycentricSimple}
	\forall u \in V(P) : \quad \beta_u(t) := \frac{\mathrm{vol}(F_u) \cdot \prod \limits_{F \in \mathcal{F}(P) : \, u \notin F} \ell_{v_F}(t) }{\mathrm{adj}_{P^\ast}(t)}.
\end{equation}
The linear form $\ell_{v_F}$ simply is the defining equation of the hyperplane spanned by the facet~$F$.
As the polytope $P$ is simple, the numerator of each rational function in~\eqref{eq:barycentricSimple} has degree $d-n$, where $d$ denotes the number of facets of $P$.
Hence, these numerators define a rational map
\begin{equation}
\label{eq:wachspressMap}
\begin{split}
	\omega_P: \PP^n &\,\dashrightarrow \PP^{N-1},\\
	t &\longmapsto \left( \prod \limits_{F \in \mathcal{F}(P) : \, u \notin F} \ell_F(t) \right)_{u \in V(P)},
\end{split}
\end{equation}
where $N$ denotes the number of vertices of $P$ and $\ell_F$ is a homogeneous linear equation defining the projective closure of the hyperplane spanned by the facet $F$.
We call $\omega_P$ the \emph{Wachspress map} of the simple polytope $P$, and call the coordinates of $\omega_P$ enumerated by the vertices $u\in V(P)$ the \emph{Wachspress coordinates}.
The Wachspress map is not defined everywhere:
if we assume the hyperplane arrangement $\HP$ to be simple, it turns out that $\omega_P$ is undefined exactly along the residual arrangement of $P$ (for a proof see Section~\ref{sec:barycentric}).

\begin{theorem}
\label{thm:baseLocus}
For a full-dimensional polytope $P$ in $\PP^n$ with  
a simple hyperplane arrangement $\HP$,
the base locus of the Wachspress map $\omega_P$ is the residual arrangement $\RP$.
\end{theorem}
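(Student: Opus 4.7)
The plan is to show that a point $t \in \PP^n$ lies in the base locus of $\omega_P$ if and only if the intersection of all facet hyperplanes of $\HP$ through $t$ contains no face of $P$, which is exactly the condition defining $\RP$. To this end I would introduce, for each $t \in \PP^n$, the set $S_t := \{F \in \mathcal{F}(P) : t \in H_F\}$ of facets whose spanning hyperplane passes through $t$, together with its intersection $L_t := \bigcap_{F \in S_t} H_F$, which is the smallest flat of $\HP$ containing $t$.

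First I would translate the base-locus condition into combinatorics. By \eqref{eq:wachspressMap}, $t$ belongs to the base locus iff, for every $u \in V(P)$, the monomial $\prod_{F : u \notin F} \ell_F$ vanishes at $t$, i.e. iff for every vertex $u$ there exists $F \in S_t$ with $u \notin F$. Taking the contrapositive, $t$ is \emph{not} in the base locus iff some vertex of $P$ lies in every facet of $S_t$; since an intersection of facets is a (possibly empty) face and every nonempty face contains a vertex, this is equivalent to $\bigcap_{F \in S_t} F \neq \emptyset$.

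Next I would unpack the condition $t \in \RP$: by definition it means that $L_t$ contains no face of $P$. The theorem thus reduces to the equivalence
\[
\bigcap_{F \in S_t} F = \emptyset \quad\Longleftrightarrow\quad L_t \text{ contains no face of } P.
\]
The implication $\Leftarrow$ is immediate, since any nonempty intersection of facets is a face of $P$ contained in $L_t = \bigcap_{F \in S_t} H_F$.

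The nontrivial direction, and the only place where simplicity enters, is the converse: assuming some face $G \subseteq L_t$, I would choose a vertex $u$ of $G$, so that $u \in L_t$ and hence $u \in H_F$ for every $F \in S_t$. Simplicity of $\HP$ forces the $n$ hyperplanes of $\HP$ through $u$ to be exactly $\{H_F : u \in F\}$, so the incidence $u \in H_F$ upgrades to the combinatorial incidence $u \in F$, giving $u \in \bigcap_{F \in S_t} F$. The main obstacle is precisely this step: without simplicity one cannot conclude that a vertex lying on a facet hyperplane also lies on the facet itself, and it is this hyperplane/face bridge that the hypothesis on $\HP$ is needed to secure.
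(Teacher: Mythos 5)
Your set-theoretic argument is correct, and for the inclusion of the base locus into $\RP$ it is in fact more direct than the paper's: the paper proves that direction by induction on $n$ (a base point on a hyperplane $H$ is shown to be a base point of the Wachspress coordinates of the facet $P\cap H$, viewed as a polytope in $H$), whereas you get it in one step from the observation that a vertex $u$ of a face contained in $L_t$ lies, by simplicity, on exactly the $n$ hyperplanes spanned by the facets through $u$, so $u\in H_F$ forces $u\in F$. The points you leave implicit — that a vertex of a full-dimensional polytope lies on at least $n$ facets, that distinct facets span distinct hyperplanes, and that $t\in\RP$ is equivalent to the minimal flat $L_t$ containing no face (a subflat of a face-free flat is face-free) — are all easily supplied.

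There is, however, a genuine gap: you only establish equality of sets, while the theorem as the paper proves and later uses it is a statement about the base \emph{scheme} of the linear system spanned by the Wachspress coordinates. The second, and much longer, half of the paper's proof shows that the scheme cut out by the ideal generated by the $\omega_{P,u}$ has no embedded components and coincides with $\RP$ locally at every point $p\in\RP$: it introduces the union $Z_p$ of the components of $\RP$ through $p$, shows that $Z_p$ is cut out scheme-theoretically by hyperplanes of $\HP$ together with certain reducible quadrics (Claims~1 and~2 in the paper's proof), and then exhibits, for each such hyperplane or quadric, a Wachspress coordinate whose zero divisor contains it while its remaining components avoid $p$ (Claims~3 and~4, using the sets $\Omega(p,s)$ and $\Omega(p,L)$). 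This scheme-theoretic refinement is what allows the blow-up $X_P$ along the base locus to be identified with the blow-up of $\PP^n$ along $\RP$, and it underlies the Bertini argument in the proof of Proposition~\ref{prop:wachspressVariety}, where the general member of the system is shown to be smooth away from the double and triple points of $\RP$. Your argument tests only pointwise vanishing of the coordinates, so it cannot detect embedded components or non-reduced structure; as written it proves a strictly weaker statement, and you would need to add a local analysis at the points of $\RP$ along the lines of the paper's second half.
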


This assertion shows that the $N$ homogeneous forms in~\eqref{eq:wachspressMap} live in the space
$$\Omega_P := H^0(\PP^n, \mathcal{I}_{\RP}(d-n)).$$
Evaluating the homogeneous form corresponding to a vertex $u \in V(P)$ at a vertex $v \in V(P)$ yields zero if and only if $u \neq v$.
Hence, the $N$ homogeneous forms in~\eqref{eq:wachspressMap}  are linearly independent in $\Omega_P$.
In fact, they form a basis of $\Omega_P$, as we show in Section~\ref{sec:adjoint} (see Theorem~\ref{thm:wachspressCoords}).

\begin{theorem}
\label{thm:dimOmega}
For a full-dimensional polytope $P$ in $\PP^n$ with  
a simple hyperplane arrangement $\HP$,
the dimension of $\Omega_P$ equals the number of vertices of $P$.
\end{theorem}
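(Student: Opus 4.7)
The plan is to complement the lower bound $\dim \Omega_P \geq N$ (established in the text via the linear independence of the $N$ Wachspress coordinate forms $w_u := \prod_{F \not\ni u} \ell_F$, which follows from the diagonal evaluation matrix $w_u(v) = c_u\,\delta_{u,v}$ with $c_u \neq 0$) by showing that the $w_u$'s span $\Omega_P$. For any $f \in \Omega_P$, the form $g := f - \sum_{u \in V(P)} \frac{f(u)}{w_u(u)}\, w_u$ lies in $\Omega_P$ and vanishes at every vertex of $P$, so it suffices to prove that any such $g$ is zero.

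I would argue this by induction on $n$; the base case $n=1$ is immediate, since $P$ is a segment, $\RP = \emptyset$, $\Omega_P = H^0(\PP^1, \mathcal{O}(1))$ has dimension $2$, and a linear form on $\PP^1$ vanishing at the two endpoints is zero. For $n \geq 2$ I would show that $\ell_F \mid g$ for each facet $F$ of $P$. Since distinct $\ell_F$'s are pairwise coprime, $\prod_F \ell_F$ (of degree $d$) would then divide $g$ (of degree $d-n < d$), forcing $g = 0$.

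To establish $g|_{H_F} \equiv 0$, restrict to $H_F \cong \PP^{n-1}$. The restriction has degree $d-n$ and vanishes on $V(F) = V(P) \cap H_F$ and on $\RP \cap H_F$; its codimension-one components in $H_F$ are precisely the $d-1-a_F$ hyperplanes $H_F \cap H_{F'}$ with $F'$ non-adjacent to $F$ (where $a_F$ counts the facets adjacent to $F$), since $H_F \cap H_{F'}$ fails to be a codimension-two face of $P$ exactly when $F \cap F' = \emptyset$. Setting $\lambda_{F'} := \ell_{F'}|_{H_F}$, the pairwise distinct linear forms $\lambda_{F'}$ all divide $g|_{H_F}$, yielding a factorization
\[
g|_{H_F} \;=\; \Bigl(\prod_{F'\ \text{non-adj.~to }F}\lambda_{F'}\Bigr)\cdot h, \qquad \deg h \;=\; a_F - (n-1).
\]
Since $F$ is a full-dimensional simple polytope in $H_F$ with $a_F$ facets and a simple induced hyperplane arrangement, the inductive hypothesis applies once one verifies $h \in \Omega_F$ and $h|_{V(F)} \equiv 0$: simplicity of $\HP$ gives $\lambda_{F'}(v) \neq 0$ for $v \in V(F)$ and non-adjacent $F'$, so $g(v) = 0$ forces $h(v) = 0$; and every $L \in \mathcal{R}_F$ satisfies $L \in \RP$ (any face of $P$ inside $L \subseteq H_F$ is already a face of $F$), so $g|_L \equiv 0$, and whenever $L \not\subseteq H_{F'}$ for all non-adjacent $F'$, irreducibility of $L$ and $\prod\lambda_{F'}|_L \not\equiv 0$ give $h|_L = 0$.

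The main obstacle is the remaining case in which some $L \in \mathcal{R}_F$ happens to lie in a non-adjacent $H_{F'}$. Simplicity forbids this for $n \leq 3$ (otherwise more than $n$ hyperplanes would pass through $L$) but permits it for $n \geq 4$. In that case the codimension-two stratum $L_0 := H_F \cap H_{F'} \in \RP$ contains $L$, so $g$ belongs globally to the ideal $(\ell_F,\ell_{F'})$ of $L_0$; writing $g = a\,\ell_F + b\,\ell_{F'}$ gives $g|_{H_F} = \lambda_{F'} \cdot b|_{H_F}$, and the Koszul residue $b|_{L_0}$ carries the vanishing information needed to pin down $h$ along $L$. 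Iterating this residue extraction over all non-adjacent facets containing $L$, and exploiting the sharp identity $h^0(\mathcal{I}_{\RP}(d-n-1)) = 1$ from Theorem~\ref{thm:adjointIntro} to control the higher-order behavior along codimension-two strata, should force $h|_L = 0$ and close the induction.
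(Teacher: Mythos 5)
Your route is genuinely different from the paper's (which cuts off a facet and runs a cohomological induction on $n$ and $d$ through the exact sequence of Proposition~\ref{prop:sequence}, proving $h^0(\mathcal{I}_{\mathcal{R}_Q\cup\,\mathcal{C}}(d-n-1))=N_Q-N_C$ and $h^1=0$), and the skeleton — show evaluation at $V(P)$ is injective on $\Omega_P$ by proving each $\ell_F$ divides any $g\in\Omega_P$ vanishing at all vertices — is a reasonable plan whose easy steps (distinctness of the traces $\lambda_{F'}$, $h(v)=0$ at vertices of $F$, $h|_L=0$ for components $L$ of $\RF$ not inside any non-adjacent trace) are fine. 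But the proof does not close at exactly the point you flag. When a component $L$ of $\RF$ lies in $H_F\cap H_{F'}$ for a non-adjacent $F'$, the vanishing of $g|_{H_F}$ along $L$ is absorbed by the factor $\lambda_{F'}$ and says nothing about $h|_L$. Your last paragraph does not repair this: writing $g=a\,\ell_F+b\,\ell_{F'}$ only uses that $g$ vanishes on the codimension-two stratum $H_F\cap H_{F'}$, and you never establish any vanishing of the residue $b$ along $L$ (only $g$, not $b$, is known to vanish on $\RP$); likewise the identity $h^0(\mathcal{I}_{\RP}(d-n-1))=1$ is invoked but never connected to the desired conclusion by an actual argument. ``Should force $h|_L=0$'' is a hope, not a step, so the induction is not completed and the upper bound $\dim\Omega_P\le N$ is not proved.

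Your dichotomy for when the bad case can occur is also off, which matters because the honest alternatives are either to prove it never occurs or to handle it. Simplicity excludes it not just for $n\le 3$ but also for $n=4$ and $n=5$: a component $L\subset\RF$ of codimension $c\ge 2$ in $H_F$ lying in a non-adjacent $H_{F'}$ is contained in at least $c+2$ hyperplanes of $\HP$; if $\dim L=0$ this already gives $n+1$ hyperplanes through a point, and if $\dim L\ge 1$ then choosing $\dim L=n-1-c$ further hyperplanes not containing $L$ (their restrictions to $L$ always have a common point) produces a point lying on at least $(c+2)+(n-1-c)=n+1$ hyperplanes. This forces the configuration into a narrow numerical range (roughly $n\ge 6$ with few facets), but it is not ruled out in general, and in that range your argument as written fails. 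Either supply a genuine argument for $h|_L=0$ in this case, or prove (using simplicity plus polytope geometry) that the case cannot arise; the Koszul sketch does neither. By contrast, the paper's induction sidesteps this entirely by never dividing: it restricts to $H$, uses Corollary~\ref{cor:residualRestricted} and the adjoint theorem on the facet, and controls the kernel term $\mathcal{I}_{\mathcal{R}_Q\cup\,\mathcal{C}}(d-n-1)$ by propagating vanishing over the connected complex of cut-off faces.
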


Hence, the Wachspress map is in fact of the form $\omega_P: \PP^n \dashrightarrow \PP(\Omega_P^\ast) \cong \PP^{N-1}$.
Its image $W_P := \overline{\omega_P(\PP^n)}$ is the \emph{Wachspress variety} of the polytope $P$.
The projective span $\mathbb{V}_P$ of the image of the unique adjoint $A_P$ under the Wachspress map is a projective subspace of $\PP(\Omega_P^\ast)$. 
We denote the linear projection from this subspace by $\rho_P$.
In Section~\ref{sec:barycentric}, we show that this projection restricted to the Wachspress variety is the inverse of the Wachspress map.

\begin{theorem}
\label{thm:inverseOfWachspress}
Let $P$ be a full-dimensional polytope in $\PP^n$ with $N$ vertices and a simple hyperplane arrangement $\HP$.
The dimension of $\mathbb{V}_P = \mathrm{span}\lbrace \omega_P(A_P)  \rbrace \subset \PP(\Omega_P^\ast)$ is $N-n-2$. 
The projection $\rho_P: \PP(\Omega_P^\ast) \dashrightarrow \PP^n$ from $\mathbb{V}_P$ restricted to the Wachspress variety $W_P$ is the inverse of the Wachspress map $\omega_P$.
\end{theorem}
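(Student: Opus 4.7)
The plan is to identify the linear system of hyperplanes in $\PP(\Omega_P^\ast)$ containing $\mathbb{V}_P$ with the $(n+1)$-dimensional subspace $a_P \cdot H^0(\PP^n,\mathcal{O}(1)) \subset \Omega_P$, where $a_P$ denotes a defining polynomial of the adjoint $A_P$, and then read off $\rho_P$ explicitly. Under the natural duality, a linear form on $\PP(\Omega_P^\ast)$ is given by an element $f \in \Omega_P$, and it vanishes at $\omega_P(t)$ exactly when $f(t)=0$, since $\omega_P$ sends $t$ to the evaluation functional $\mathrm{ev}_t \in \Omega_P^\ast$. Hence, by Theorem~\ref{thm:baseLocus} and Zariski closure, the hyperplanes of $\PP(\Omega_P^\ast)$ containing $\omega_P(A_P)$ correspond precisely to the forms $f \in \Omega_P$ that vanish set-theoretically on $A_P$.

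For any linear form $L$ on $\PP^n$, the product $a_P \cdot L$ has degree $d-n$ and vanishes on $\RP$ (because $a_P$ does, by Theorem~\ref{thm:adjointIntro}), so it lies in $\Omega_P$. The map $L \mapsto a_P \cdot L$ is injective because the homogeneous coordinate ring of $\PP^n$ is an integral domain, so it produces an $(n+1)$-dimensional subspace of the vanishing system. The reverse inclusion --- that every $f \in \Omega_P$ vanishing on $A_P$ is divisible by $a_P$, hence of the form $a_P \cdot L$ --- combined with $\dim \Omega_P = N$ from Theorem~\ref{thm:dimOmega}, yields codimension exactly $n+1$ for $\mathbb{V}_P$ in $\PP(\Omega_P^\ast)$, i.e., $\dim \mathbb{V}_P = N-n-2$.

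To identify $\rho_P$ with the inverse of $\omega_P$, I would choose the basis $\{a_P \cdot x_0, \ldots, a_P \cdot x_n\}$ of this linear system, where $x_0, \ldots, x_n$ are the standard homogeneous coordinates on $\PP^n$; this basis identifies the target of $\rho_P$ with the original $\PP^n$. The projection then sends $[\phi] \in \PP(\Omega_P^\ast)$ to $[\phi(a_P x_0) : \ldots : \phi(a_P x_n)]$, so
\[
\rho_P(\omega_P(t)) \;=\; [\,a_P(t)\, x_0(t) : \ldots : a_P(t)\, x_n(t)\,] \;=\; [\,x_0(t) : \ldots : x_n(t)\,] \;=\; t
\]
whenever $a_P(t) \neq 0$. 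Hence $\rho_P$ restricted to $W_P$ inverts $\omega_P$ on a dense open subset.

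The main obstacle is the reverse divisibility step: every $f \in \Omega_P$ vanishing set-theoretically on $A_P$ must be divisible by $a_P$ itself rather than merely by its radical. This amounts to $A_P$ being reduced, a property one expects to hold whenever the hyperplane arrangement $\HP$ is simple. A natural route is to verify reducedness through the perturbation/limit perspective of Corollary~\ref{cor:limit}: for generic simple polytopes the adjoint is smooth and irreducible (as suggested by the elliptic and K3 cases announced in the abstract), and reducedness of the limit together with a semicontinuity argument in families should cover the general simple case.
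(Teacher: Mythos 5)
Your proposal is correct and follows the paper's own route almost verbatim: the proof of Lemma~\ref{lem:dimImageAdjoint} likewise extends $t_0\,\mathrm{adj}_{P^\ast},\ldots,t_n\,\mathrm{adj}_{P^\ast}$ to a basis of $\Omega_P$, identifies hyperplanes through $\mathbb{V}_P$ with degree-$(d-n)$ forms vanishing on $A_P$, and asserts---exactly the divisibility step you flag, without further comment---that such a form is $\mathrm{adj}_{P^\ast}$ times a linear form, after which the computation $\rho_P(\omega_P(t))=\mathrm{adj}_{P^\ast}(t)\cdot t$ gives the inverse just as in your argument. One word of caution on your closing aside: a perturbation/limit argument cannot by itself deliver reducedness of $A_P$, since reducedness can be lost in limits (the regular cube, whose adjoint degenerates to a double plane, is the paper's own example, albeit at a non-simple arrangement), so that step stands in your write-up on essentially the same footing as in the paper's proof, where it is simply asserted.
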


As the Wachspress map $\omega_P$ is rational, we consider the blow-up $\pi_P: X_P \to \PP^n$ of $\PP^n$ along the base locus of $\omega_P$ (i.e., along the residual arrangement $\RP$ according to Theorem~\ref{thm:baseLocus}).
Thus $X_P$ is the closure of the graph of the Wachspress map $\omega_P$.
In this way, the projection of $X_P \subset \PP^n \times \PP(\Omega_P^\ast)$ onto the second factor yields a lifting $\tilde{\omega}_P: X_P \to W_P$ of the Wachspress map.
We summarize our maps in the commutative diagram in Figure~\ref{fig:diagram}
(we introduce the blowup $\pi_P^s: X_P^s \to \PP^n$ after Proposition~\ref{prop:wachspressVariety}):
\begin{figure}
\centering
\begin{normalsize}
\begin{tikzcd}
&
\PP^n \times \PP(\Omega_P^\ast)
\arrow[dr] \\
X_P^s \arrow[r]
\arrow[dr, "\pi_P^s" below]
&
X_P \arrow[d, "\pi_P" left]
\arrow[u, hook]
\arrow[dr, "\tilde{\omega}_P" {above, near start}] &[16mm]
\PP(\Omega_P^\ast) \arrow[dl, dashed, "\;\rho_P" {above, near start}]
\\
&
\PP^n \arrow[r, dashed, "\omega_P" below] &[16mm]
W_P  \arrow[u, hook]
\end{tikzcd}
\end{normalsize}
\caption{The Wachspress map $\omega_P$ with its inverse and associated blowups.}
\label{fig:diagram}
\end{figure}
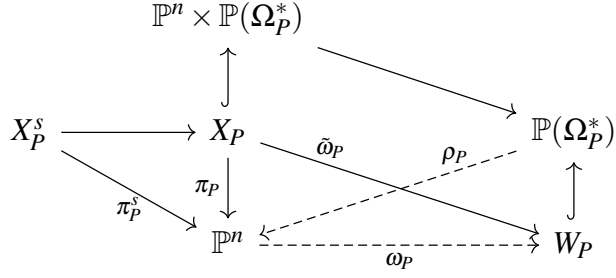

\begin{example}[see \cite{IS14}]\label{ex:surface} 
When $P$ is a $d$-gon in $\PP^2$, the Wachspress coordinates span a  $d$-dimensional vector space $\Omega_P$ and the residual arrangement $\RP$ consists of $\binom{d}{2}-d$ points,
so $\pi_P:X_P\to \PP^2$ is the blowup of these points and the morphism $\tilde{\omega}_P$ maps $X_P$ into $ \PP^{d-1}.$
The image $W_P$ is a surface of degree $\binom{d-2}{2}+1$.  The adjoint curve $A_P$ is, for a general $P$, a smooth curve of degree $d-3$ whose image under $\omega_P$ has degree $\binom{d-3}{2}$.
\hfill$\diamondsuit$
\end{example}

We extend these results by Irving and Schenck to polytopes in $3$-space.

\begin{example} \label{ex:prismCube}
If $P$ is a tetrahedron, the residual arrangment $\RP$ is the empty set, the adjoint surface is empty and the Wachspress map is the identity map on $\PP^3$. 

If $P$ is a triangular prism, the adjoint surface is the unique plane spanned by the line and the point of $\RP$ (see Example  \ref{ex:firstAdjoints}).
The Wachspress map is given by $(\ell_{\triangle_1}:\ell_{\triangle_2}) \otimes (\ell_{\square_1}:\ell_{\square_2}:\ell_{\square_3})$,
where the $\ell_{\triangle_i}$ and $\ell_{\square_j}$ denote the defining equations of the planes spanned by triangular and quadrangular facets, respectively.
The image of the Wachspress map is the Segre embedding of $\PP^1\times \PP^2$ in $\PP^5$.  The restriction of the Wachspress map to the adjoint plane is the projection from the point, so the image of the adjoint plane is a line.

If $P$ is a cube, perturbed so that the plane arrangement $\HP$ is simple, the adjoint surface is the unique quadric surface that contains the three lines of $\RP$ (see Example \ref {ex:firstAdjoints}).
The Wachspress map is given by $(\ell_1:\ell_6) \otimes (\ell_2 : \ell_5) \otimes (\ell_3 : \ell_4)$,
where $\ell_i$ and $\ell_{7-i}$ denote the defining equations of two planes spanned by opposite facets of the perturbed cube $P$.
The image of the Wachspress map is the Segre embedding of $\PP^1\times \PP^1\times \PP^1$ in $\PP^7$.  The restriction of the Wachspress map to the adjoint surface contracts the ruling of lines that meet all three lines  in $\RP$, so the image of the surface is a twisted cubic curve.
\hfill$\diamondsuit$
\end{example}

In fact, for all other three-dimensional polytopes defined by simple plane arrangements,
the image of the adjoint surface $A_P$ under the Wachspress map $\omega_P$ is a surface.
We prove the following extension of the results by Irving and Schenck in Section~\ref{sec:barycentric}.

\begin{proposition}
\label{prop:wachspressVariety}
Let $P$ be a full-dimensional polytope in $\PP^3$  with $d$ facets and a simple plane arrangement $\HP$.
Let $a$ be the number of isolated points in the residual arrangement~$\RP$,
let $b$ be the number of double points (i.e. points where exactly two lines in $\RP$ meet),
and let $c$ be the number of triple points (where three lines in $\RP$ meet).
   The rational variety $W_P\subset \PP^{2d-5}$ has degree 
 $$
{\rm deg}\; W_P\!=\!2b+4c-a-\frac{1}{2}(d-3)(d^2\!-11d+26)\!=\!b+2c+1-\frac{1}{6}(d-3)(d-4)(d-11) $$  
   and sectional genus
  $g(W_P)=b+2c +1+\frac{1}{2}(d-3)(d-6).$
 
  The image $\bar A_P := \overline{\omega_P(A_P)}  \subset W_P$ of the adjoint surface  $A_P$
is a curve if and only if $P$ is a triangular prism or a cube (see Example~\ref{ex:prismCube}).
If $P$ is not a triangular prism, cube, or tetrahedron, the image $\bar A_P$ is a surface of degree
    $${\rm deg}\; \bar A_P=2b+4c-a-\frac{1}{2}(d-3)(d-4)(d-6)=b+2c+1-\frac{1}{6}(d-3)(d^2-12d+38)$$
and sectional genus
  $g(\bar A_P)=b+2c +1-\frac{1}{2}(d-3)(d-4).$
  
  The image $\bar D\subset W_P$ of any surface in $X_P$ linearly equivalent to the strict transform of $\HP$ has degree
  $${\rm deg}\; \bar D= \sum \limits_{F \in \mathcal{F}(P)}  \binom{|V(F)|-2}{2}+d=4b+9c-3a-\frac{1}{2}(d-3)(3d^2-30d+64),$$
while the sectional genus is   
$$g(\bar D)=\!\! \sum \limits_{F \in \mathcal{F}(P)} \binom{|V(F)|-3}{2}+2d-5=1+4b+9c-3a-\frac{1}{2}(d-3)(3d^2-30d+68 ).$$
  \end{proposition}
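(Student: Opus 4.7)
The plan is to lift $\omega_P$ to a morphism on a smooth model of $\PP^3$ and to extract each stated quantity as an intersection number. Let $\pi_P^s: X_P^s \to \PP^3$ be the iterated blowup that first blows up the points of $\RP$ (the $a$ isolated, $b$ double, and $c$ triple points) and then blows up the strict transforms of the lines of $\RP$. Write $H := (\pi_P^s)^*\mathcal{O}_{\PP^3}(1)$, denote by $E_\ell$ the exceptional divisor over a line $\ell \subset \RP$, and by $E_p$ the total transform in $X_P^s$ of the exceptional $\PP^2$ over a point $p \in \RP$. A general Wachspress numerator $\prod_{F:u\notin F}\ell_F$ vanishes to order one along each $\ell$ and to order $m_p$ at each $p$, where $m_p=1$ at isolated points and $m_p$ equals the number of lines of $\RP$ through $p$ (so $m_p=2,3$ at double, resp.\ triple, points). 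Hence the Wachspress linear system lifts on $X_P^s$ to the line bundle
\begin{equation*}
L \;=\; (d-3)\,H \;-\; \sum_{\ell} E_\ell \;-\; \sum_{p} m_p\, E_p.
\end{equation*}
By Theorems \ref{thm:baseLocus} and \ref{thm:inverseOfWachspress}, $|L|$ is base-point free and the resulting morphism $\tilde\omega_P^s: X_P^s \to W_P$ is birational, so $\deg W_P = L^3$.

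The strict transform of the adjoint hypersurface satisfies $\tilde A_P = L - H$: indeed $A_P$ has degree $d-4$ and, by Theorem \ref{thm:adjointIntro} together with the vanishing-order analysis underlying Proposition \ref{prop:WarrenCompare}, it vanishes along $\RP$ with exactly the same multiplicities as a generic element of $|L|$. Hence $\deg \bar A_P = L^2 \cdot \tilde A_P = L^3 - L^2 \cdot H$ whenever $\tilde\omega_P^s|_{\tilde A_P}$ is generically finite onto its image; the three cases in which $\bar A_P$ drops dimension (tetrahedron, triangular prism, cube) are settled directly using Example \ref{ex:prismCube}. Similarly, the strict transform $\tilde D$ of a general plane is determined by its multiplicities of intersection with each stratum of $\RP$, and the facet contribution $\binom{|V(F)|-2}{2}$ in the displayed formula for $\deg \bar D$ is precisely the Wachspress degree of the polygon $F$ of Example \ref{ex:surface}. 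The sectional genera follow by adjunction on $X_P^s$: from the canonical class $K_{X_P^s} = -4H + \sum_\ell E_\ell + 2\sum_p E_p$ (obtained by iterating the blowup-canonical formula in codimensions $2$ and $3$), one has $g(W_P) = 1 + \tfrac{1}{2}(K_{X_P^s}+2L)\cdot L^2$, and analogous formulas with $L$ replaced by its restrictions to $\tilde A_P$ and $\tilde D$ yield $g(\bar A_P)$ and $g(\bar D)$.

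Carrying out the expansion reduces to evaluating the basic intersection numbers on $X_P^s$: $H^3=1$, $H^2\cdot E_\bullet=0$, $H\cdot E_\ell^2=-1$, $H\cdot E_p^2=0$, together with the triple self-intersections $E_\ell^3$, $E_p^3$ and the mixed terms $E_\ell^a\cdot E_p^{3-a}$ for $p\in\ell$. The principal obstacle is the bookkeeping of these last terms: after blowing up a double or triple point $p$ first, the strict transform of the point exceptional $\PP^2$ becomes $\PP^2$ blown up at the $m_p$ intersection points with the strict transforms of the lines of $\RP$ through $p$, and the normal bundle of each $E_\ell$ is correspondingly twisted at each such point. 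These local corrections are what introduce the $a$, $b$, $c$ dependence into the basic intersection numbers; once they are tabulated, the two stated expressions for $\deg W_P$ and the remaining formulas for $g(W_P)$, $\deg \bar A_P$, $g(\bar A_P)$, $\deg \bar D$, and $g(\bar D)$ follow by polynomial expansion.
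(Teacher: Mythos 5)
Your plan hinges on two claims about the lift of the Wachspress system to $X_P^s$, and both fail. First, the multiplicities $m_p$ are wrong. By Theorem~\ref{thm:baseLocus} the base scheme is the \emph{reduced} residual arrangement, so the divisor to subtract is governed by the multiplicity of the \emph{general} member of the system, not by the number of lines of $\RP$ through $p$ (individual numerators $\omega_{P,u}$ have multiplicity $1$, $2$ or $3$ at such points depending on $u$). At a double point the two lines of $\RP$ lie in a common plane $H$ of $\HP$ (by simplicity), the local base ideal is $(x,yz)$, and there are coordinates of multiplicity exactly one there (take $u$ a vertex of the edge on the third line through $p$); hence the general member is smooth at double points, and at triple points it is a quadric cone of multiplicity $2$ — this is exactly the paper's class $(d-3)h-E_{(3)}-E_{(2)}-2E_{(3),t}$ in~\eqref{eq:classTildeS}, not your $m_p=2,3$. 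Second, even after correcting the coefficients, the lifted system on $X_P^s$ is \emph{not} base-point free: every member is tangent at a double point to the common plane $H$ (or singular there), so after blowing up $p$ all strict transforms contain the line $E_p\cap\tilde H$ in the exceptional $\PP^2$, and blowing up the two lines of $\RP$ does not remove this base curve. Consequently $\deg W_P\neq L^3$ and the adjunction formula $2g-2=(K_{X_P^s}+2L)\cdot L^2$ does not compute the sectional genus. The error is not mere bookkeeping: for the $d=6$ polytope with facet sizes $5,5,4,4,3,3$ (row four of Table~\ref{tab:polytopes}, $(a,b,c)=(2,2,0)$, $\deg W_P=8$) your $L$ gives $L^3=4$, and the corrected class still gives $6$, because of the unresolved base curves over the two double points. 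The same defects propagate to $\deg\bar A_P$, $\deg\bar D$ and the genera. In addition, you only verify via Example~\ref{ex:prismCube} that $\bar A_P$ is a curve for the prism and the cube; the proposition also asserts the converse, which needs an argument as in Lemma~\ref{lem:adjointImageIsCurve} (a facet with at least five vertices gives a non-contracted adjoint curve, forcing $d\le 6$, etc.).

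The paper sidesteps exactly these difficulties by never doing threefold intersection theory on a model where the system would have to be free: it works on $X_P$, the blowup of the actual base ideal, invokes Bertini to get a general member $S$ with only ordinary double points at the triple points of $\RP$, and restricts every computation to the smooth surface $\tilde S$. There the needed data are the surface intersection numbers of Lemma~\ref{exceptional}, and the dependence on $a,b,c$ enters through $E^2=-a$ and through $(C+E_t)^2$, which is evaluated by computing the arithmetic genus of the nodal configuration of $\binom{d-3}{2}+c$ lines and applying adjunction on $\tilde S$; Lemma~\ref{lem:residualArrP3} then converts between the two displayed forms of each formula, and $\deg\bar D$ is cross-checked facet by facet via Example~\ref{ex:surface}. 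If you want to salvage your approach on a smooth threefold, you would have to blow up further (resolving the base curves over the double points, and re-examining the triple points), re-derive the correct free class, and only then expand — at which point you are essentially reconstructing the corrections that the paper's restriction-to-$\tilde S$ argument packages automatically.
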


As the variety $X_P$ is typically not smooth (see Lemma~\ref{exceptional}),
we blow up a little further.
The map $\pi_P^s$ (see Figure~\ref{fig:diagram}) also blows up $\PP^n$ along $\RP$, but decomposed into a sequence of blowups each along a smooth locus.

 To be precise, we consider a decomposition $\RP=\bigcup_{c=2}^n \RPc$, where $ \RPc$ is the union of the irreducible components of $\RP$ of codimension $c$ that are not contained in any component of codimension $c-1$ in $\RP$. Similarly, we decompose the singular locus ${\rm Sing}\RP= \bigcup_{c=3}^n \RPcs$, where $\RPcs$ is the union of codimension $c$ 
 linear spaces in the singular locus of $\RP$ that lie on $c$ hyperplanes in $\HP$.  
  We first blow up all isolated points and $0$-dimensional singular points in~$\RP$, i.e ${\mathcal R}_{P,n}\cup {\mathcal R}_{P,s,n}$.    Next, we blow up the strict transform of the lines ${\mathcal R}_{P,n-1}\cup {\mathcal R}_{P,s,n-1}$, and so on.   The procedure ends with the blowup of the strict transform of the codimension three locus ${\mathcal R}_{P,3}\cup {\mathcal R}_{P,s,3}$ and finally of the strict transform of the codimension two locus~${\mathcal R}_{P,2}$.  The map $\pi_P^s: X_P^s \to \PP^n$  is the composition of these blowups, which ensures that $X_P^s$ is smooth.

\paragraph{\textbf{Polytopal hypersurfaces.}}
The blow-up $\pi_P^s$ allows us to relate adjoints of polytopes to the classical notion of adjoints of hypersurfaces and divisors.
For a hypersurface  $Y\subset\PP^n$ of degree $d$, an \emph{adjoint} is a hypersurface of degree $d-n-1$ that contains every component of codimension $c$ of the singular locus of $Y$ where $Y$ has multiplicity $c$.
More generally, for a smooth variety $X$ and a smooth subvariety $Y \subset X$ of codimension one, an \emph{adjoint to $Y$ in $X$} is a codimension-one subvariety of $X$ whose class in the Chow ring of $X$ is $K_X + [Y]$ (i.e., the canonical class of $X$ plus the class of $Y$).
This adjoint appears in the \emph{adjunction formula}: the canonical class $K_Y$ of $Y$ is $K_X + [Y]$ restricted to $Y$.  
Therefore adjoints play an important role in the classification of varieties (for a nice survey, see~\cite{BS}).

If the strict transform of $\HP$ in $X_P^s$ is linearly equivalent to a smooth divisor~$\tilde D$,
then we can compare adjoints of $\tilde D$ in $X_P^s$ to the adjoint $A_P$ of the polytope $P$.
More formally, we study the linear system $\Gamma_P$ of divisors $D$ in $\PP^n$ which have degree $d$ and vanish with multiplicity $c$ along $\RPc$.
We call the hypersurfaces in $\Gamma_P$ the \emph{polytopal hypersurfaces} associated to $P$.
Note that the hyperplane arrangement $\HP$ is a polytopal hypersurface.
%We call the strict transforms $\tilde D$ in $X_P^s$ of the divisors $D$ in the linear system~$\Gamma_P$ the \emph{polytopal hypersurfaces} associated to $P$.
In Section~\ref{sec:deformations}, we show that each smooth strict transform $\tilde D$ in $X_P^s$ of a polytopal hypersurface $D \in \Gamma_P$ associated to a polytope $P$ with a simple hyperplane arrangement $\HP$ has a unique adjoint in $X_P^s$. 
The hypersurface $A_P$ of the polytope $P$ is an adjoint for any $D\in \Gamma_P$ that is smooth outside $\RP$, and its strict transform is the main component of the unique adjoint of $\tilde D$ in $X_P^s$ (for a proof see Section~\ref{sec:deformations}).

\begin{proposition}
\label{prop:relateAdjoints}
Let $P$ be a full-dimensional polytope in $\PP^n$ with a simple hyperplane arrangement $\HP$.
Moreover, let $\tilde{A}_P$ be the strict transform of the adjoint $A_P$ under the blow-up $\pi_P^s: X_P^s \to \PP^n$, and let $[\tilde{A}_P]$ be its  class in the Chow ring of~$X_P^s$.
%Then $$K_{X_P} + [\tHP] = [\tilde{A}_P]+[E_P],$$
Every strict transform $\tilde D$ in $X_P^s$ of a polytopal hypersurface $D \in \Gamma_P$ satisfies $$K_{X_P^s} + [\tilde{D}] = [\tilde{A}_P]+[E_P],$$
where  $E_P$ is an effective divisor whose image $\pi_P^s(E_P)$ is contained in the singular locus of~$\RP$.
If $\tilde D$ is smooth, it has a unique adjoint in $X_P^s$, and thus a unique canonical divisor: namely $\tilde{D} \cap (\tilde{A}_P+E_P)$.
\end{proposition}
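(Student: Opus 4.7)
The strategy is to reduce the identity to a Chow-ring computation on $X_P^s$, exploiting the explicit structure of $\pi_P^s$ as an iterated blowup of smooth centers ordered from high to low codimension. Index the successive blowups by $j$, writing $E_j$ for the exceptional divisor, $c_j$ for the codimension (in $\PP^n$) of the linear subspace corresponding to the $j$-th center, and $m_j$, $a_j$ for the multiplicities of the strict transforms of $D$ and $A_P$, respectively, along that center. Standard blowup formulas then give
\begin{align*}
K_{X_P^s} &= \pi_P^{s,*}\bigl(-(n+1)H\bigr) + \sum_j (c_j-1)\,E_j,\\
[\tilde{D}] &= \pi_P^{s,*}(dH) - \sum_j m_j\,E_j,\\
[\tilde{A}_P] &= \pi_P^{s,*}\bigl((d-n-1)H\bigr) - \sum_j a_j\,E_j,
\end{align*}
where $H$ is the hyperplane class on $\PP^n$. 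Since $-(n+1)+d-(d-n-1)=0$, the pull-back terms cancel in $K_{X_P^s}+[\tilde{D}]-[\tilde{A}_P]$, and setting $[E_P] := \sum_j (a_j + c_j - 1 - m_j)\,E_j$ immediately yields the identity asserted in the proposition.

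What then remains is to show that $[E_P]$ has an effective representative whose support maps into $\mathrm{Sing}(\RP)$. For a smooth center $Z_j$ lying in $\RPc$, the defining condition of $\Gamma_P$ forces $m_j = c_j$, while the uniqueness of $A_P$ from Theorem~\ref{thm:adjointIntro} together with a local analysis of $\mathcal{I}_{\RP}(d-n-1)$ at a generic point of a smooth component gives $a_j = 1$; the coefficient therefore vanishes, so no part of $E_P$ lies over $\RP\setminus\mathrm{Sing}(\RP)$. For a singular center $Z_j \subseteq \RPcs$, which lies on exactly $c_j$ hyperplanes of $\HP$, the required inequality $a_j + c_j - 1 \geq m_j$ must be verified: the multiplicity of any polytopal $D$ at $Z_j$ is controlled from above by the codimension and multiplicity data of the components of $\RP$ passing through $Z_j$, and the same components force $A_P$ to acquire a correspondingly large multiplicity $a_j$ at $Z_j$, since $A_P$ vanishes on each of them. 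Because the blowup is ordered by decreasing codimension, at the stage where $Z_j$ is blown up, the higher-codimension components of $\RP$ through $Z_j$ have already been separated from each other, reducing the estimate to a local scheme-theoretic comparison. This coordinated multiplicity bookkeeping at singular centers is the main technical step of the argument.

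Finally, for the uniqueness of the adjoint when $\tilde{D}$ is smooth, any adjoint in $X_P^s$ lies in the class $K_{X_P^s}+[\tilde{D}]=[\tilde{A}_P + E_P]$, so the claim reduces to $h^0\bigl(X_P^s, \mathcal{O}(\tilde{A}_P + E_P)\bigr)=1$. Since $E_P$ is $\pi_P^s$-exceptional, pushing forward identifies every global section with a section of $\mathcal{I}_{\RP}(d-n-1)$ on $\PP^n$, and Theorem~\ref{thm:adjointIntro} ensures that this space is one-dimensional, spanned by $A_P$ itself. Hence $\tilde{A}_P+E_P$ is the unique effective representative, and the adjunction formula identifies its restriction to the smooth divisor $\tilde{D}$ with the canonical divisor $K_{\tilde{D}}$.
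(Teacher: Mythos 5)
Your overall architecture is the same as the paper's: write the classes of $K_{X_P^s}$, $[\tilde D]$ and $[\tilde A_P]$ on the iterated blowup, observe that the $h$-parts cancel because $d-(n+1)-(d-n-1)=0$, define $E_P$ as the exceptional remainder, and obtain uniqueness of the adjoint (hence of the canonical divisor of a smooth $\tilde D$) from the uniqueness of $A_P$, i.e.\ from $h^0(\PP^n,\mathcal{I}_{\RP}(d-n-1))=1$ in Theorem~\ref{thm:adjointIntro}. Those parts are in order, up to the usual care that your $E_j$ must be total transforms of the stepwise exceptional divisors and the $m_j$, $a_j$ multiplicities of the strict transforms at the moment the corresponding center is blown up.

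The gap is exactly the step you defer: effectivity of the coefficients $a_j+c_j-1-m_j$ over the singular centers, which you call ``the main technical step'' and do not prove. Moreover, the mechanism you sketch for it is misdirected: you hope the components of $\RP$ through a center $Z_j\subset\RPcs$ force $A_P$ to have ``correspondingly large'' multiplicity $a_j$, but $A_P$ has degree $d-n-1$ and is in general smooth along $\mathrm{Sing}\,\RP$ (in the paper's three-dimensional examples the adjoint surfaces are smooth), so the only usable input is $a_j\ge 1$. The paper closes the inequality from the other side: a codimension-$c$ component of $\RPcs$ lies on exactly $c$ hyperplanes of $\HP$, so $\HP$ has multiplicity exactly $c$ there, and a polytopal hypersurface $D\in\Gamma_P$ has multiplicity at most $c$ along it; hence the coefficient is $(c_j-m_j)+(a_j-1)\ge 0$ with no need to inflate $a_j$. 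Until you supply that multiplicity bound for $D$ (or an equivalent), the effectivity of $E_P$ is not established. A smaller but real point: for the support statement $\pi_P^s(E_P)\subset\mathrm{Sing}\,\RP$ you need $a_j=1$ (not merely $a_j\ge 1$) along the smooth strata $\RPc$, which you assert via an unspecified ``local analysis''; this is the same fact the paper uses (multiplicity one of $A_P$ along each component of $\RPc$) and should be stated explicitly as an input rather than left implicit.
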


As a smooth strict transform $\tilde D$ of a polytopal hypersurface $D$ has a unique canonical divisor, it is interesting to determine its birational type.
In Section~\ref{sec:deformations},  we identify all combinatorial types of simple polytopes in dimensions two and three which admit smooth strict transforms of their polytopal hypersurfaces and we describe their birational types.

\begin{proposition}
\label{prop:TypesInDim2}
Let $d \in \mathbb{Z}, d \geq 3$, and
let $P$ be a general polygon in $\PP^2$ with $d$ edges.
There is a polygonal curve $D \in \Gamma_P$ with a smooth strict transform $\tilde D$ in $X_P^s$ if and only if $d \leq 6$.
These curves are elliptic.
\end{proposition}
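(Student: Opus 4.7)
My plan is to work on $X_P^s = \mathrm{Bl}_{\RP}(\PP^2)$, which in dimension two is just the ordinary blowup of the $r := |\RP| = \binom{d}{2}-d$ residual points. Let $H$ be the pullback of a line and $E_1,\dots,E_r$ the exceptional divisors. Every $D \in \Gamma_P$ has strict transform $\tilde D \sim dH - 2\sum_i E_i$, and $K_{X_P^s} = -3H + \sum_i E_i$; so adjunction gives
$$
2g(\tilde D)-2 \;=\; \tilde D\cdot(\tilde D+K_{X_P^s}) \;=\; d(d-3)-2r \;=\; 0,
$$
so any smooth $\tilde D$ is elliptic, which settles the birational-type claim for all $d$.

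For existence when $d\le 6$, the case $d=3$ is immediate since $\RP=\emptyset$. For $d=4,5$ the expected dimension $d(6-d)$ of $\Gamma_P$ is strictly positive, and I verify that the base scheme at each residual point is exactly $\mathfrak{m}_r^2$, so the generic member has an honest node there; Bertini on $X_P^s$ then produces a smooth irreducible element. For $d=6$ the expected dimension is $0$, but $\Gamma_P$ contains the two distinct elements $\HP$ and $A_P^2$ (note $\deg A_P=3$ and $A_P$ passes simply through each of the nine points of $\RP$), giving $\dim\Gamma_P\ge 1$; a local tangent-cone computation at each residual point $r$ shows that the generic member of this pencil has tangent cone $\lambda \ell_a\ell_b+\mu t^2$, where $\ell_a,\ell_b$ are the two lines of $\HP$ through $r$ and $t$ is the tangent line to $A_P$ at $r$. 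This conic is non-degenerate for generic $\lambda,\mu$ (two distinct tangent directions), hence a genuine node, and Bertini produces the desired smooth element.

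For non-existence when $d\ge 7$, the crucial numerical input is $\tilde D^2 = d^2-4r = d(6-d)<0$. Assume $\tilde D$ is smooth, hence reduced. If $\tilde D$ were irreducible, then $\tilde D$ would be the unique effective divisor in its linear class: any other effective $D'\sim \tilde D$ either contains $\tilde D$ (so $D'-\tilde D$ is effective and linearly equivalent to $0$, hence zero, giving $D'=\tilde D$) or meets it in $D'\cdot\tilde D\ge 0$, contradicting $\tilde D^2<0$. Since $\tilde \HP$ also lies in the class $dH-2\sum_i E_i$, this would force $\tilde D=\tilde \HP$, contradicting irreducibility (as $\tilde \HP$ is a union of $d$ lines). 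A smooth reducible $\tilde D=\bigsqcup\tilde D_j$ with pairwise disjoint components and classes $a_jH-\sum_i b_{j,i}E_i$ must satisfy $\sum_j a_j=d$, $\sum_j b_{j,i}=2$ for each $i$, and $\tilde D_j\cdot\tilde D_l=0$ for $j\neq l$; combining these with the constraint that each component's class is represented by an actual plane curve of degree $a_j$ and using the incidence structure of $\RP$ (each line of $\HP$ carries exactly $d-3$ residual points; every residual point lies on exactly two lines of $\HP$), one shows by case analysis on the degree partitions that no effective decomposition exists for general $P$ once $d\ge 7$.

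The main obstacle is the last step (ruling out reducible smooth $\tilde D$), where one has to exhaust the possible decompositions of $dH-2\sum_i E_i$ into disjoint effective pieces and verify, by Severi-type dimension counts combined with the rigid incidence pattern of $\RP$, that each alternative is empty for a general polygon. The irreducible case, by contrast, falls out immediately from the negative self-intersection argument together with the presence of $\tilde \HP$ in the same linear class.
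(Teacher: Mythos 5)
Your existence argument for $d\le 6$ and the adjunction computation for the genus are fine and essentially follow the paper's route (Bertini, nodes exactly at $\RP$, genus $\binom{d-1}{2}-\bigl(\binom{d}{2}-d\bigr)=1$; for $d=6$ the pencil spanned by $\HP$ and $A_P^2$ is indeed the way to see $\dim\Gamma_P>0$). The genuine gap is in the ``only if'' direction for $d\ge 7$: your negative self-intersection argument only disposes of an \emph{irreducible} smooth $\tilde D$, and only under the additional assumption that $D$ has multiplicity exactly two at every residual point (members of $\Gamma_P$ are merely required to have multiplicity at least two, and your identification $\tilde D\sim\tHP$ uses the exact value). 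The reducible case, which you yourself flag as ``the main obstacle,'' is not proved: ``one shows by case analysis on the degree partitions \ldots using Severi-type dimension counts'' is exactly the missing content, and it is not a routine verification, so as written the proposal does not establish the proposition for $d\ge 7$.

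The paper closes this case by an elementary B\'ezout argument (its Lemma on fixed components, specialized to polygons), which handles reducible curves and arbitrary multiplicities in one stroke and makes the self-intersection machinery unnecessary. Let $\ell$ be the line spanned by an edge of $P$; it contains exactly $d-3$ points of $\RP$, at each of which any $D\in\Gamma_P$ has multiplicity at least two. If $\ell\not\subset D$, then
\begin{equation*}
d \;=\; \deg D \;\ge\; \sum_{p\in\ell\cap\RP}\operatorname{mult}_p D \;\ge\; 2(d-3),
\end{equation*}
forcing $d\le 6$. Hence for $d\ge 7$ every edge-line is a component of every $D\in\Gamma_P$, and since these $d$ lines already exhaust the degree, $\Gamma_P=\lbrace \HP\rbrace$. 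Its strict transform $\tHP$ is singular at the $d$ points lying over the vertices of $P$ (the vertices are not blown up, so the two branches there are not separated), so no $D\in\Gamma_P$ has a smooth strict transform. I recommend replacing your reducible-case sketch by this argument; your irreducible-case observation then becomes superfluous.
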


\begin{theorem}
\label{thm:TypesInDim3}
Let $\mathcal{C}$ be a combinatorial type of simple three-dimensional polytopes, and
let $P$ be a general polytope in $\PP^3$ of type $\mathcal{C}$. 
There is a polytopal surface $D \in \Gamma_P$ with a smooth strict transform $\tilde D$ in $X_P^s$ if and only if
$\mathcal{C}$ is one of the nine combinatorial types in Table~\ref{tab:polytopes}.

In that case, the general $D \in \Gamma_P$ 
is either an elliptic surface (if $P$ has a hexagonal facet)
or a K3-surface (if $P$ has at most pentagonal facets).

\end{theorem}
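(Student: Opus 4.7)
The starting point is Proposition~\ref{prop:relateAdjoints}: when $\tilde D$ is smooth, adjunction gives
\[
K_{\tilde D} = \bigl([\tilde{A}_P] + [E_P]\bigr)\big|_{\tilde D},
\]
so the birational type of $\tilde D$ is controlled by how $A_P$ and the exceptional divisors of $\pi_P^s$ meet $\tilde D$. My plan is to (a) bound the combinatorial complexity of $P$ by demanding that a generic member of $\Gamma_P$ have a smooth strict transform in $X_P^s$, (b) enumerate the finitely many surviving combinatorial types, and (c) compute $K_{\tilde D}$ in each case to separate the K3 from the properly elliptic cases.

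\textbf{Step 1: smoothness conditions.} A general $D\in\Gamma_P$ has degree $d$, multiplicity $2$ along each line component of $\RP$, and multiplicity $3$ at each isolated or triple point of $\RP$. For each facet $F$ of $P$, the intersection $D\cap \Pi_F$ is a plane curve of degree $d$ which is forced to contain each extended edge of $F$ with multiplicity $2$ (since those lines lie in $\RP$ with multiplicity $2$). Subtracting these fixed components leaves a residual curve in $\Pi_F$ of degree $d-2|V(F)|$, which must be effective and base-point free on $F$'s plane for $\tilde D$ to be smooth above the facet. This inequality, together with the analogous condition at triple points of $\RP$, already forces strong bounds on $d$ and on the valences of facets; in particular it rules out any facet with more than six edges, and bounds the combinatorial types that admit smooth $\tilde D$ to a finite list.

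\textbf{Step 2: enumeration.} With the bounds from Step 1 in place, I would run through the combinatorial classification of simple $3$-polytopes whose facets all have at most six vertices and whose $f$-vector is small enough to satisfy the residual inequalities. For each candidate type, check directly (using the local models of the blowups constituting $\pi_P^s$) that the generic $D\in\Gamma_P$ does produce a smooth strict transform. This is the verification that exactly the nine types of Table~\ref{tab:polytopes} survive. The converse direction, which is the main obstacle, is to show that for every combinatorial type \emph{not} in the table, either the required residual curves on facet planes fail to exist for dimension reasons or the generic $D$ acquires an unavoidable singularity along the exceptional locus of $\pi_P^s$; this is done facet-by-facet using the local picture described in Step~1.

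\textbf{Step 3: K3 versus elliptic.} For each of the nine admissible types, apply the adjunction formula above. When every facet of $P$ has at most five vertices, the adjoint $A_P$ has degree $d-4$ small enough that its intersection with $D$ is contained in the base locus $\RP$, so $[\tilde A_P]+[E_P]$ restricts to zero on $\tilde D$, giving $K_{\tilde D}=0$ and hence a K3 surface. When $P$ has a hexagonal facet $F$, the restriction $A_P\cap \Pi_F$ contains the Wachspress adjoint cubic of the hexagon $F$, which is for general $P$ a smooth plane elliptic curve; its strict transform in $\tilde D$ is a smooth genus-$1$ curve whose class realizes $K_{\tilde D}$ as an effective divisor numerically proportional to a fiber, exhibiting $\tilde D$ as properly elliptic. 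The hard part will be the complete ruling-out of combinatorial types in Step~2; the K3/elliptic dichotomy in Step~3 then falls out of adjunction and the concrete shape of $A_P$ in each of the nine cases.
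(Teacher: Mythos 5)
Your Step 1 asserts that the lines spanned by the edges of a facet $F$ lie in $\RP$ with multiplicity two, so that $D\cap\Pi_F$ must contain each extended edge doubled, leaving a residual curve of degree $d-2|V(F)|$. But lines through edges of $P$ contain faces of $P$ and are therefore \emph{not} in the residual arrangement: by Corollary~\ref{cor:residualRestricted}, $\RP\cap H_F$ consists of the $d-e-1$ lines cut by the hyperplanes of $\HP$ that do not define facets of $F$ (where $e=|V(F)|$), together with the finite point set $\RF$. So the curve residual to the forced double lines in $D\cap H_F$ has degree $d-2(d-e-1)=2e-d+2$, not $d-2|V(F)|$, and the resulting constraint (the paper's Lemma~\ref{irred}) bounds facet sizes from \emph{below} ($e\geq d-4$ for non-triangular facets, and $d\leq 8$ if a triangle occurs), not from above. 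Your inequality $d-2|V(F)|\geq 0$ runs in the wrong direction: it would exclude the hexagonal prism ($d=8$ with hexagonal facets), which \emph{is} one of the nine admissible types, and it does not produce the actual bound $d\leq 8$ obtained from $e\geq d-5$ combined with $\sum_{F}|V(F)|=6d-12$. Moreover, the elimination of the remaining bad types with $d=7,8$ is not a routine facet-by-facet smoothness check; it requires specific plane-curve arguments (e.g.\ three non-collinear isolated points of $\RP$ in a hexagonal facet plane forcing a contradiction via complete intersections on a cubic), and the positive direction --- that the general $D$ for the nine surviving types is irreducible with smooth strict transform --- is verified computationally, a step your Step 2 only promises.

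Step 3 is also incorrect as stated. The restriction of $[\tilde A_P]+[E_P]$ to $\tilde D$ is in general a \emph{nonzero} effective divisor, so $K_{\tilde D}=0$ is false for most of the nine types: for the triangular prism, for instance, the adjoint plane meets the quintic $D$ in a plane quintic curve, certainly not contained in $\RP$, and several rows of Table~\ref{tab:polytopes} are non-minimal K3s carrying $(-1)$-lines. The correct route (Corollary~\ref{K3orelliptic}) is: first prove $\tilde D$ is regular, using $h^1(\mathcal{O}_{\tilde D}(K_{\tilde D}))=h^1(\mathcal{I}_{\RP}(d-4))=0$ from Theorem~\ref{thm:adjoint}; then use that the canonical curve is unique, so $\chi(\mathcal{O}_{\tilde D})=2$; then identify the canonical curve via Lemma~\ref{canonicalcurve} as the union of strict transforms of the facet adjoint curves plus exceptional curves. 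When all facets have at most five vertices these components are rational and exceptional of the first kind, so the minimal model is a K3; a hexagonal facet contributes a plane cubic, forcing a minimal (regular) elliptic surface. Without the regularity statement and this analysis of the canonical curve, "$K_{\tilde D}=0$ hence K3'' is both unproved and literally false, and the elliptic case ("numerically proportional to a fiber'') is asserted rather than derived.
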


We describe all combinatorial types of simple three-dimensional polytopes with smooth strict transforms of their polytopal surfaces in detail in Subsections~\ref{ssec:prisms} and \ref{ssec:Nprisms}.
Some of their key properties are summarized in Table~\ref{tab:polytopes}.

Since Wachspress asked for a geometric construction of the unique adjoint associated to a tesseract (i.e., a four-dimensional hypercube) in 2011~\cite{Wachs11},
we discuss this higher dimensional example in detail.

\begin{example}
\label{ex:tess}
Let $P$ be a hypercube in $\PP^n$, which is perturbed so that its hyperplane arrangement is simple.
The polytope $P$ has $2n$ facets and $2^n$ vertices.
The residual arrangement $\RP$ consists of $n$ subspaces of codimension two,
as every pair of hyperplanes spanned by opposite facets of $P$ intersects in such a subspace.
There is an $(n-2)$-dimensional family of lines in $\PP^n$ passing through all $n$ subspaces in $\RP$.
The union of these lines forms the adjoint hypersurface $A_P$ of degree $n-1$.

Similar to the cube case (cf. Example~\ref{ex:prismCube}), we see that the Wachspress variety $W_P$ is the Segre embedding of $\left( \PP^1 \right)^n$ in $\PP^{2^n-1}$.
The restriction of the Wachspress map to the adjoint hypersurface contracts each line in the ruling described above.
Explicit calculations for $n \leq 5$ suggest that
the image of $A_P$ under the Wachspress map is a rational variety of dimension $n-2$ projectively equivalent to the image of a general $(n-2)$-dimensional subspace of $\PP^n$.
In fact, we computed for small $n$ that
the intersection of $W_P$ with two general hyperplanes which contain the image of $A_P$ has two components:
both are rational and isomorphic to the image of $\PP^{n-2}$ mapped by hypersurfaces of degree $n$ that contain $n$ linear spaces of dimension $n-4$ in general position
(these $n$ linear spaces are the intersection of the $\PP^{n-2}$ with $\RP$).
The intersection of the two components is an anticanonical divisor on each of them.  

The general hyperplane sections of $W_P$ are images of hypersurfaces of degree $n$ that contain $\RP$, while the polytopal hypersurfaces in $\Gamma_P$ have degree $2n$ and are singular along~$\RP$.
So  $P$ has an irreducible polytopal hypersurface and its images in $W_P$ is the intersection with a quadric hypersurface. 
In particular, that image is a Calabi-Yau variety.
\hfill$\diamondsuit$
\end{example}

It is natural to ask if all  our results can be extended to polytopes in dimension four and higher.
Moreover, since many of our results are restricted to polytopes which are defined by simple hyperplane arrangements,
it would be interesting to study more general classes of polytopes.

\begin{problem}
Can the results in Proposition~\ref{prop:wachspressVariety}, Theorem~\ref{thm:TypesInDim3} and Table~\ref{tab:polytopes} 
be extended to polytopes in $\PP^n$ whose hyperplane arrangements are simple, with $n>3$?

In particular, is there a simple description of such polytopes which have smooth strict transforms of their polytopal hypersurfaces?
What are their birational types?
\end{problem}

\begin{problem}
Can our results be extended to polytopes whose hyperplane arrangements are not simple? 

In particular, 
find a general definition of the adjoint hypersurface of a polytope
which does not depend on the simplicity of its hyperplane arrangement
and does not involve limits as in Corollary~\ref{cor:limit}.
\end{problem}

\begin{remark}
\label{rem:cohenMac}
Let $\mathcal{C}$ be one of the combinatorial types in Table~\ref{tab:polytopes}
and let $P$ be a general polytope in $\PP^3$ of type $\mathcal{C}$.
We verified computationally that the rational variety $W_P\subset \PP^{2d-5}$ is arithmetically Cohen Macaulay.
This further extends results by Irving and Schenck~\cite{IS14}, who showed in the case of polygons in the plane that the Wachspress surface $W_P$ is arithmetically Cohen Macaulay.
 \hfill$\diamondsuit$
\end{remark}

\begin{problem}
Does Remark~\ref{rem:cohenMac} hold for \emph{every} combinatorial type of simple three-
dimensional polytopes?
What about simple polytopes in $n$ dimensions, and non-simple polytopes?
\end{problem}

\begin{landscape}
\begin{table}
\caption{Combinatorial types of simple polytopes $P$ in $\PP^3$ with smooth strict transforms $\tilde D$ of polytopal surfaces $D \in \Gamma_P$. \hspace*{-30mm} \newline
\small A $\circ$ incident to $2$ lines in the drawings of $\RP$ means that the lines do \emph{not} intersect in $\PP^3$. $(a,b,c)$ are defined in Prop.~\ref{prop:wachspressVariety}.
}
\label{tab:polytopes}
\begin{footnotesize}
\begin{tabular}{cccccccc}
\Xhline{2\arrayrulewidth}
\rule{0pt}{3ex}
comb. &
facet & 
$\RP$ &
$(a,b,c)$ & 
$W_P$ & 
$\overline{w_P(A_P)}$ & 
$\dim \Gamma_P$ & 
$\overline{w_P(D)}$   
\\
type &
sizes &
&
&
(deg., sec. genus) &
(deg., sec. genus) &&
(deg., sec. genus)
\\
\hline 
\multirow{4}{*}{\includegraphics[width = 0.1\textwidth]{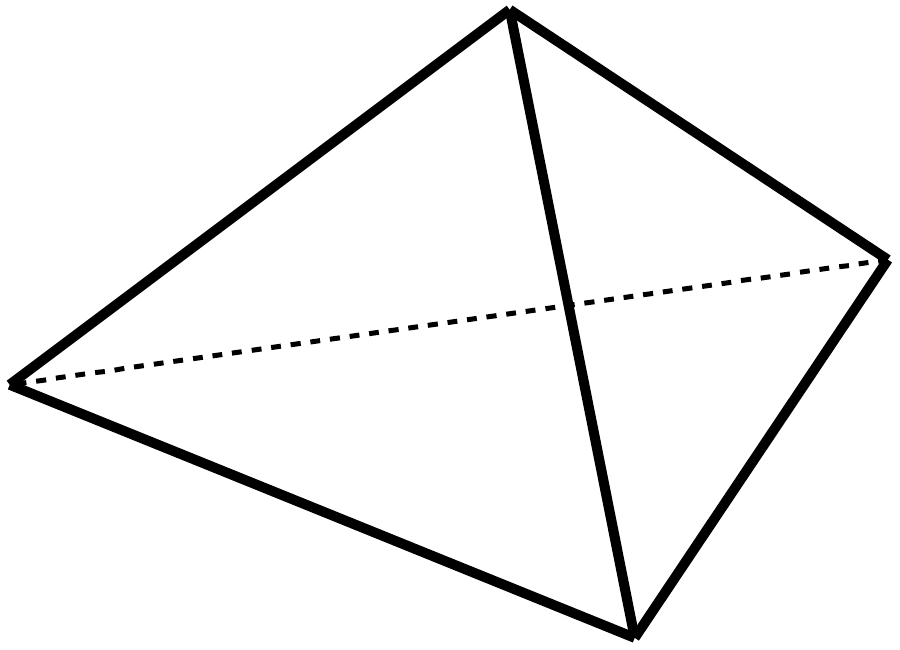}} &
\multirow{4}{*}{$3\,3\,3\,3$}
&&
\multirow{4}{*}{$(0,0,0)$}
&&&
\multirow{4}{*}{$34$}
\\ &&&&
$\PP^3$ &
$0$ &&
minimal K3 
\\ &&&&
$(1,0)$ &&&
(smooth quartic in $\PP^3$)
\\\vspace*{-3mm}
&\\
\multirow{4}{*}{\includegraphics[width = 0.1\textwidth]{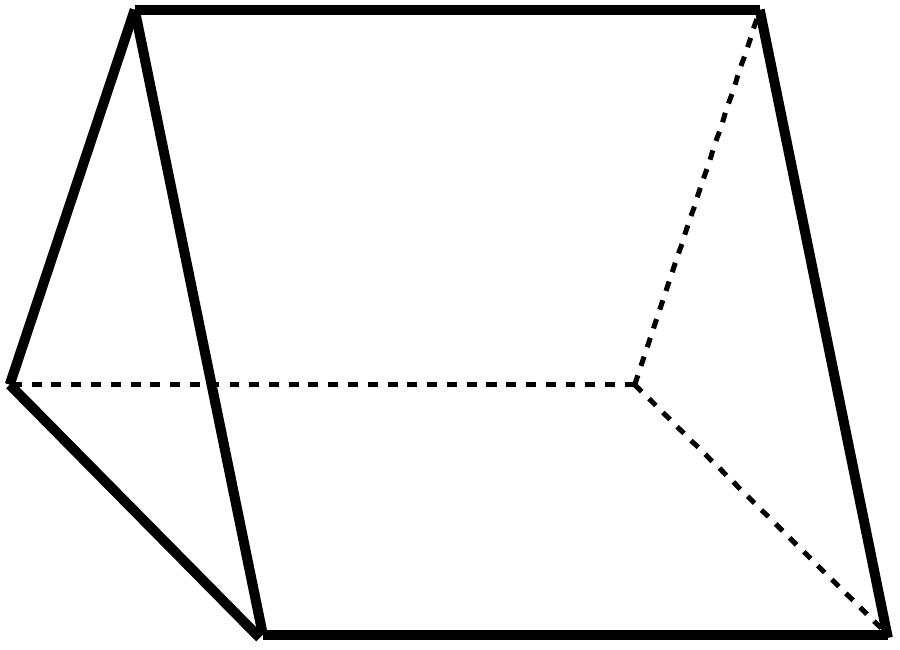}} &
\multirow{4}{*}{$4\,4\,4\,3\,3$} &
\multirow{4}{*}{\includegraphics[width = 0.1\textwidth]{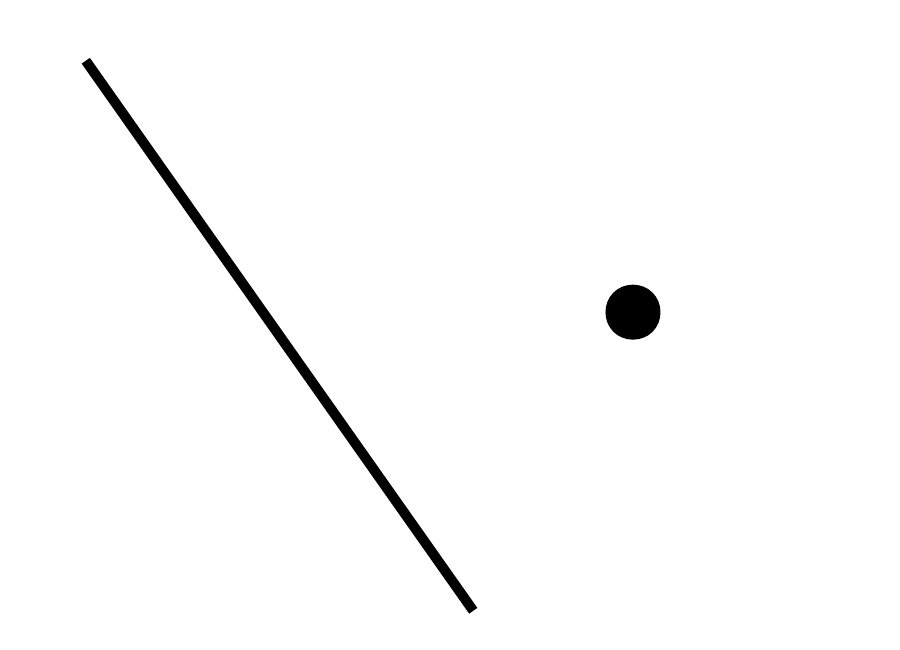}} &
\multirow{4}{*}{$(1,0,0)$}
&&&
\multirow{4}{*}{$23$}
\\ &&&&
$\PP^1 \times \PP^2 \subset \PP^5$ &
line &&
minimal K3
\\ &&&&
$(3,0)$ &&&
$(8,5)$
\\\vspace*{-3mm}
&\\
\multirow{4}{*}{\includegraphics[width = 0.1\textwidth]{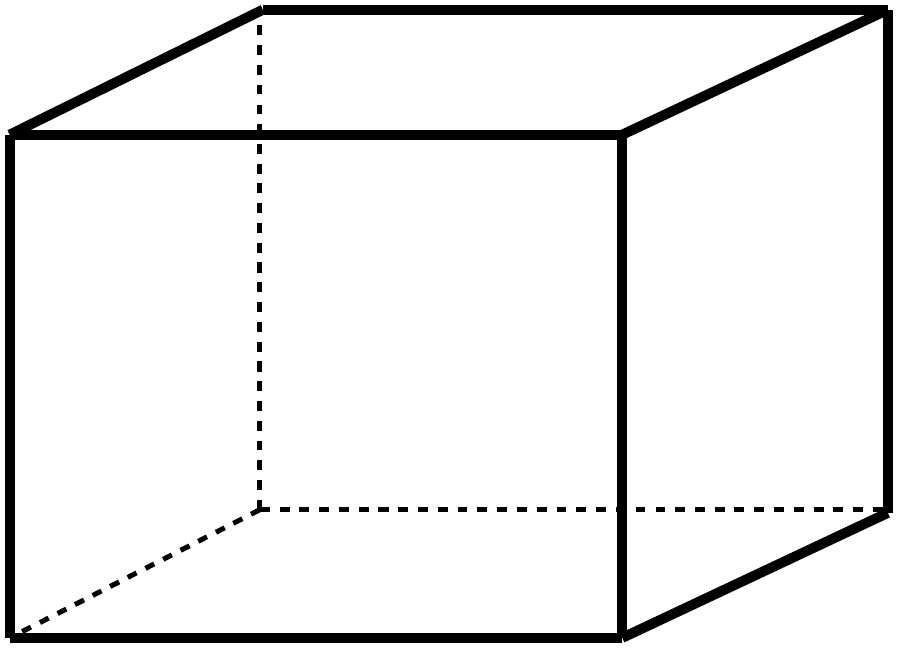}} &
\multirow{4}{*}{$4\,4\,4\,4\,4\,4$} &
\multirow{4}{*}{\includegraphics[width = 0.1\textwidth]{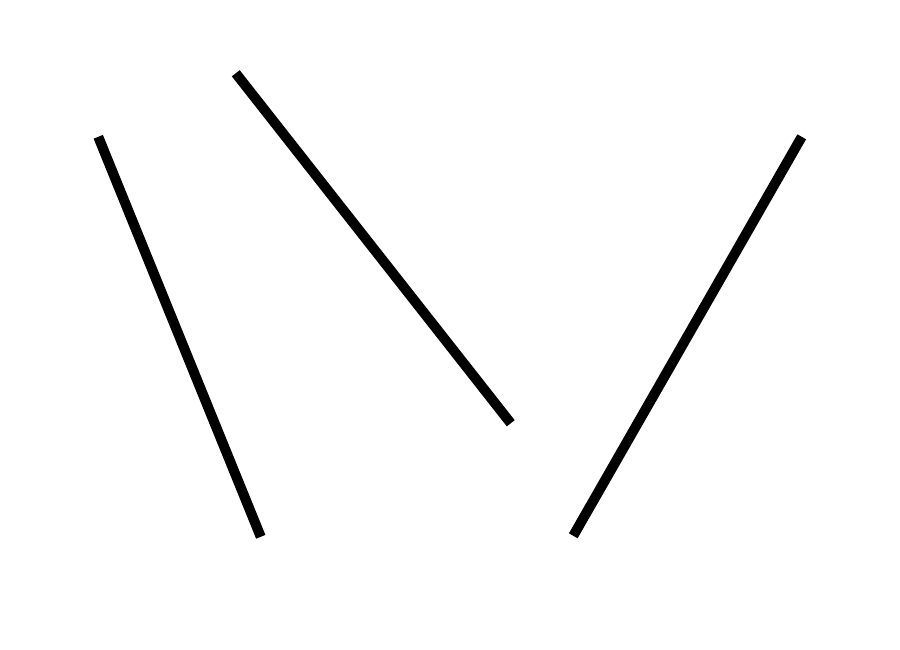}} &
\multirow{4}{*}{$(0,0,0)$}
&&&
\multirow{4}{*}{$26$} 
\\ &&&&
$\PP^1 \times \PP^1 \times \PP^1 \subset \PP^7$ &
twisted cubic curve &&
minimal K3
\\ &&&&
$(6,1)$ &&&
$(12,7)$
\\\vspace*{-3mm}
&\\
\multirow{4}{*}{\includegraphics[width = 0.1\textwidth]{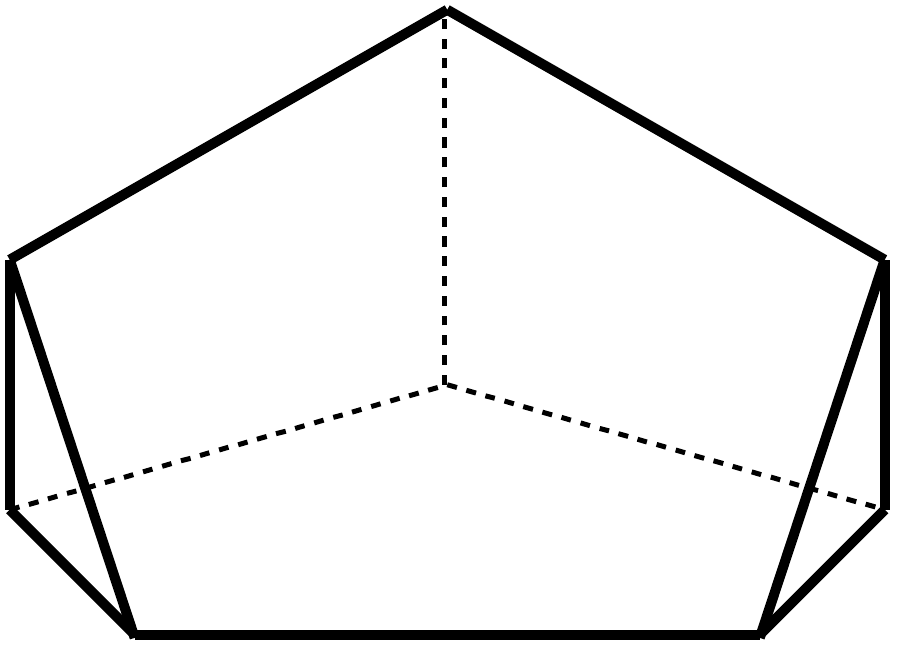}} &
\multirow{4}{*}{$5\,5\,4\,4\,3\,3$} &
\multirow{4}{*}{\includegraphics[width = 0.1\textwidth]{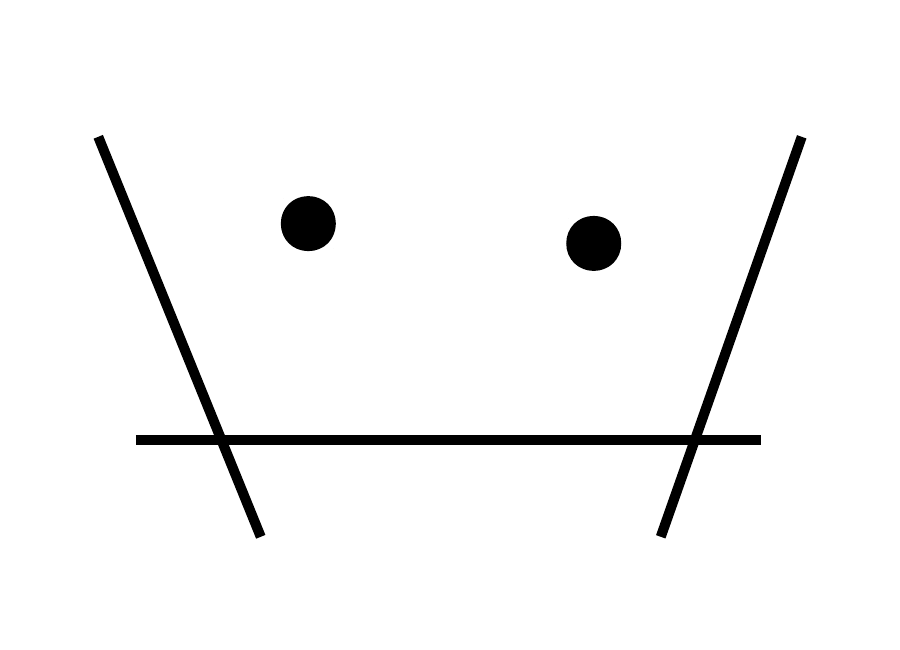}} &
\multirow{4}{*}{$(2,2,0)$}
&&&
\multirow{4}{*}{$17$} 
\\ &&&&
$W_P \subset \PP^7$&
quadric surface &&
{non-minimal K3}
\\ &&&&
$(8,3)$ & 
$(2,0)$ &&
$(14,9)$
\\\vspace*{-3mm}
&\\
\multirow{4}{*}{\includegraphics[width = 0.1\textwidth]{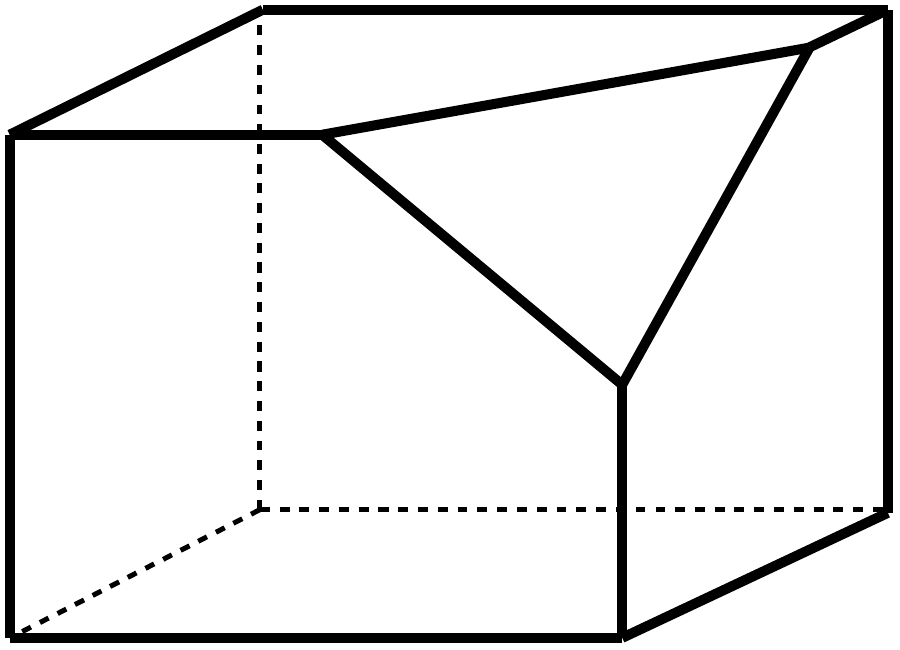}} &
\multirow{4}{*}{$5\,5\,5\,4\,4\,4\,3$} &
\multirow{4}{*}{\includegraphics[width = 0.1\textwidth]{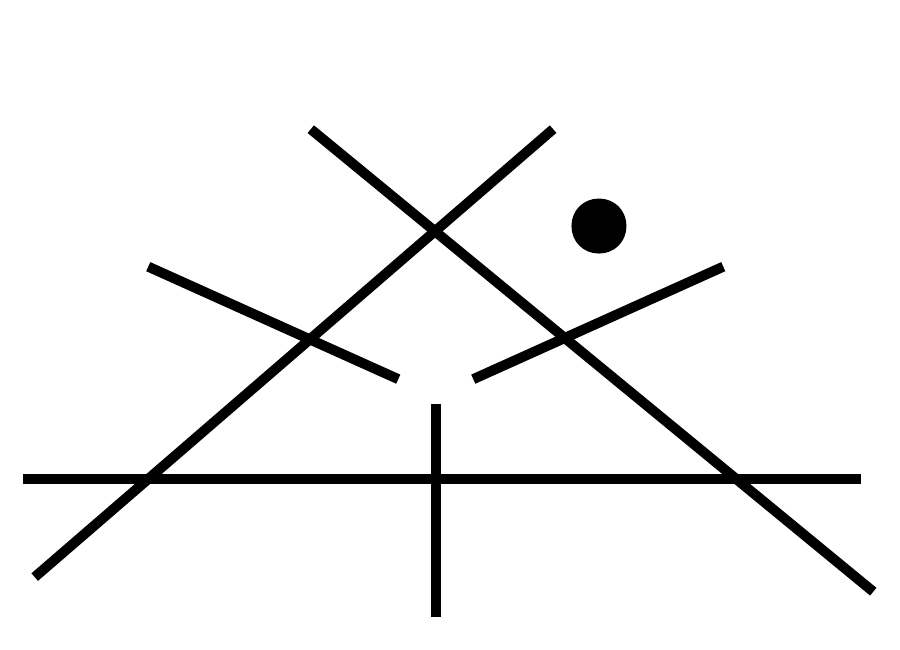}} &
\multirow{4}{*}{$(1,6,0)$}
&&&
\multirow{4}{*}{$7$} 
\\ &&&&
$W_P \subset \PP^9$ & 
del Pezzo surface in $\PP^5$ &&
{non-minimal K3}
\\ &&&&
$(15,9)$ &
$(5,1)$ &&
$(19,12)$
\\\vspace*{-3mm}
&\\
\multirow{4}{*}{\includegraphics[width = 0.1\textwidth]{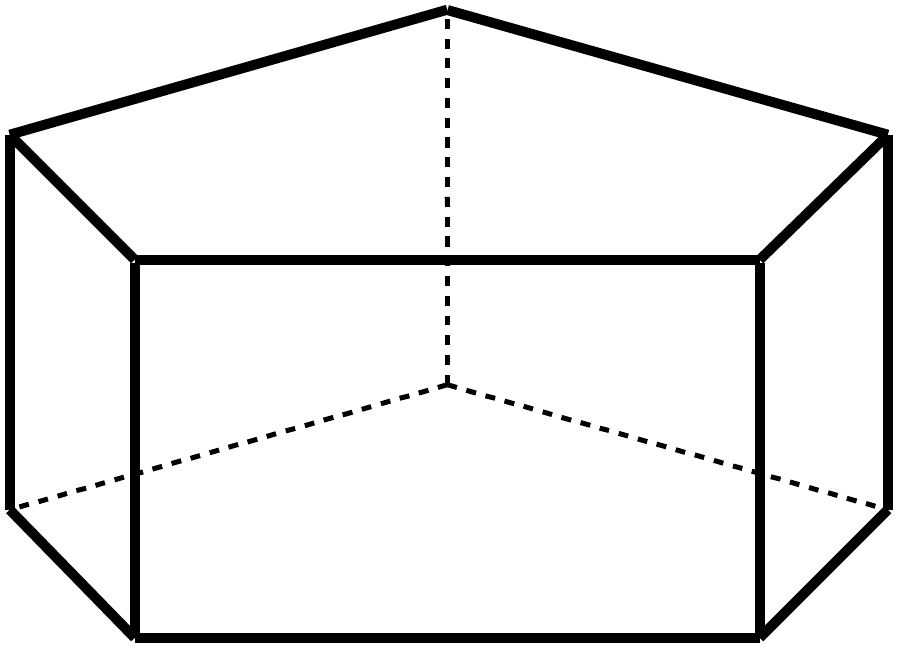}} &
\multirow{4}{*}{$5\,5\,4\,4\,4\,4\,4$} &
\multirow{4}{*}{\includegraphics[width = 0.1\textwidth]{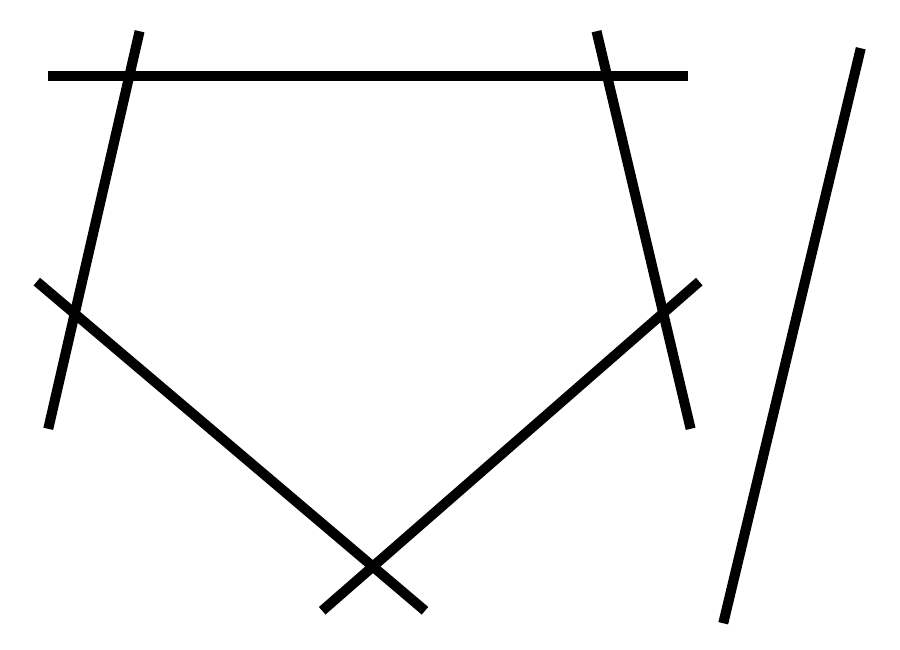}} &
\multirow{4}{*}{$(0,5,0)$}
&&&
\multirow{4}{*}{$12$} 
\\ &&&&
Fano $3$-fold in $\PP^9$ &
rational scroll in $\PP^5$ &&
{non-minimal K3}
\\ &&&&
$(14,8)$ &
$(4,0)$ &&
$(18,11)$
\\\vspace*{-3mm}
&\\
\multirow{4}{*}{\includegraphics[width = 0.1\textwidth]{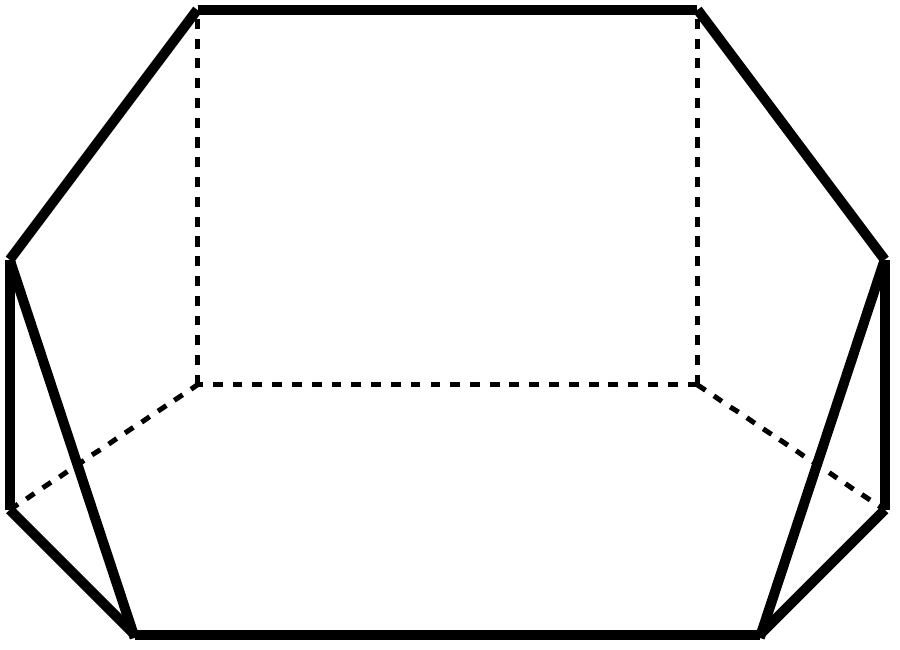}} &
\multirow{4}{*}{$6\,6\,4\,4\,4\,3\,3$} &
\multirow{4}{*}{\includegraphics[width = 0.1\textwidth]{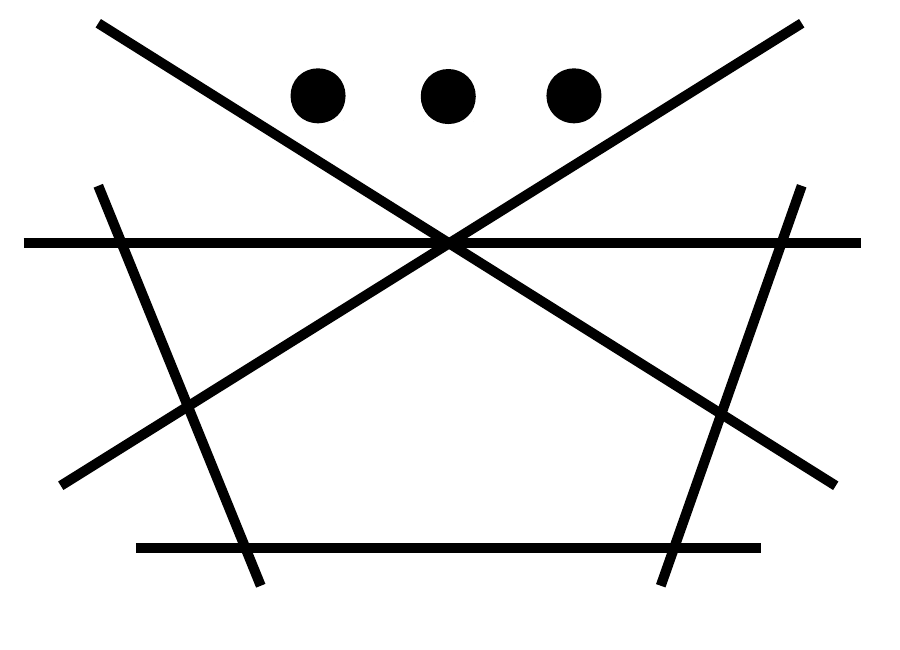}} &
\multirow{4}{*}{$(3,6,1)$}
&&&
\multirow{4}{*}{$4$} 
\\ &&&&
$W_P \subset \PP^9$ &
rational elliptic surface in $\PP^5$ &&
{minimal elliptic}
\\ &&&&
$(17,11)$ & 
$(7,3)$ &&
$(22,15)$
\\\vspace*{-3mm}
&\\
\multirow{4}{*}{\includegraphics[width = 0.1\textwidth]{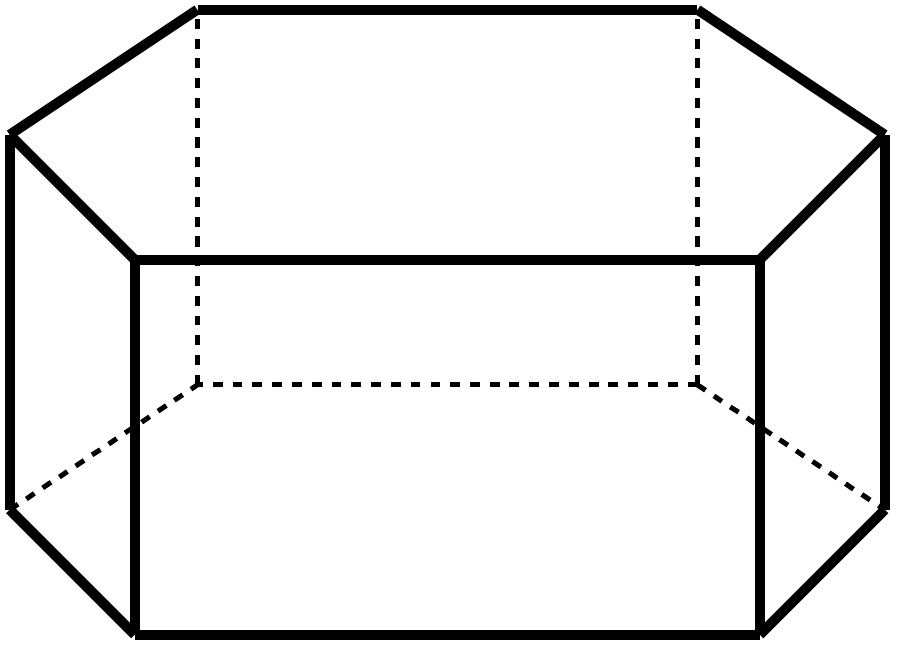}} &
\multirow{4}{*}{$6\,6\,4\,4\,4\,4\,4\,4$} &
\multirow{4}{*}{\includegraphics[width = 0.14\textwidth]{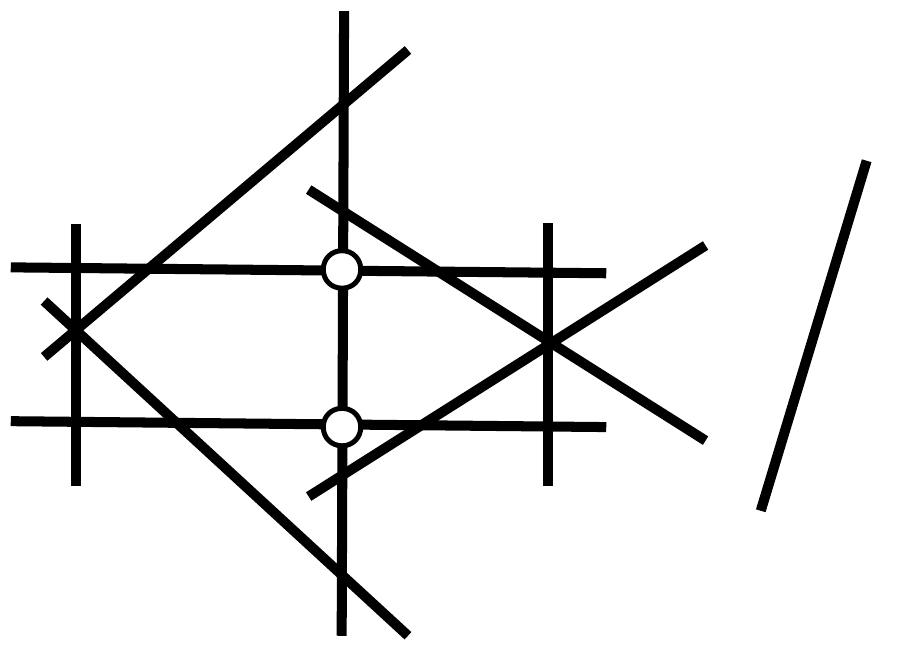}} &
\multirow{4}{*}{$(0,12,2)$}
&&&
\multirow{4}{*}{$3$} 
\\ &&&&
$W_P \subset \PP^{11}$ &
elliptic K3-surface in $\PP^7$ &&
{minimal elliptic}
\\ &&&&
$(27,22)$ &
$(12,7)$ &&
$(26,17)$
\\\vspace*{-3mm}
&\\
\multirow{4}{*}{\includegraphics[width = 0.1\textwidth]{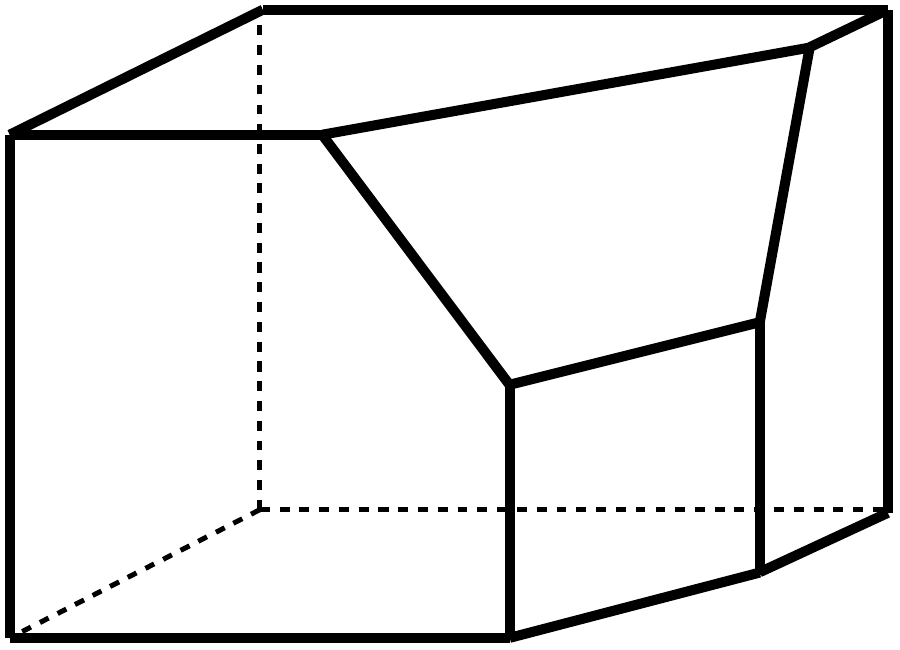}} &
\multirow{4}{*}{$5\,5\,5\,5\,4\,4\,4\,4$} &
\multirow{4}{*}{\includegraphics[width = 0.14\textwidth]{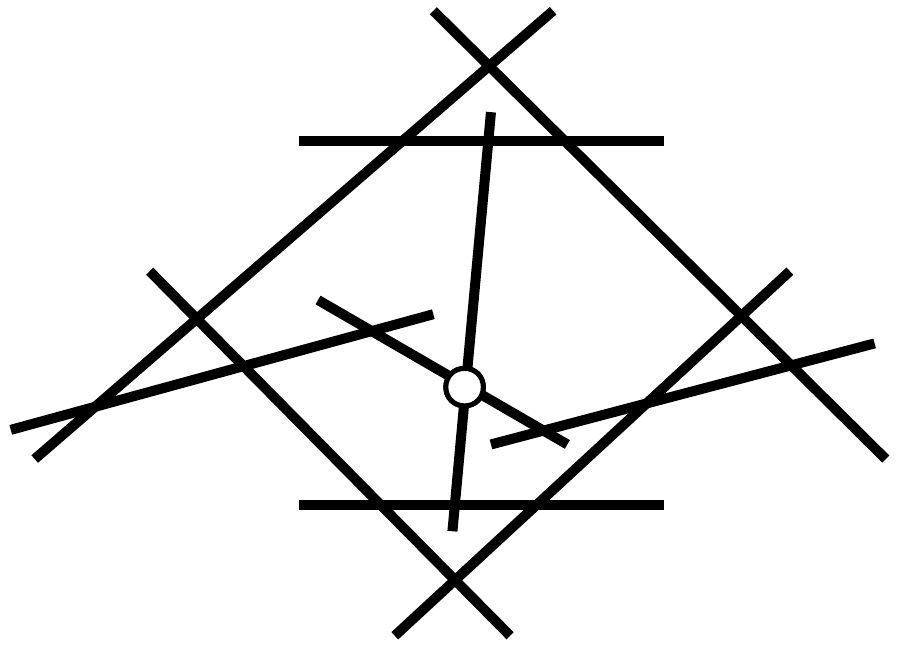}} &
\multirow{4}{*}{$(0,16,0)$}
&&&
\multirow{4}{*}{$1$} &
\\ &&&&
$W_P \subset \PP^{11}$ &
K3-surface in $\PP^7$ &&
{non-minimal K3}
\\ &&&&
$(27,22)$ &
$(12,7)$ &&
$(24,15)$
\\\vspace*{-3mm}
&\\
&\\
\Xhline{2\arrayrulewidth}
\end{tabular}
\end{footnotesize}
\end{table}
\end{landscape}

 \section{The adjoint}
 \label{sec:adjoint}
Our main result in this section is to show that every full-dimensional polytope $P$ defined by a simple hyperplane arrangement has a unique \emph{adjoint} $A_P$.

\begin{theorem}
\label{thm:adjoint}
Let $d, n \in \Z_{>0}$ with $d \geq n+1$.
For every full-dimensional polytope $P$ in~$\PP^n$ with $d$ facets and a simple hyperplane arrangement $\HP$,
there is a unique hypersurface $A_P$ in~$\PP^n$ of degree $d-n-1$ vanishing on $\RP$,
i.e. $h^0 (\PP^n, \mathcal{I}_{\RP}(d-n-1)) = 1$.
Moreover, this hypersurface does not pass through any vertex of $P$,
and $h^1 (\PP^n, \mathcal{I}_{\RP}(d-n-1)) = 0$.
\end{theorem}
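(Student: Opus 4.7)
Existence of an adjoint is already in hand via Warren's polynomial: by Proposition~\ref{prop:WarrenCompare}, $\mathrm{adj}_{P^\ast}$ is a section of $\mathcal{O}_{\PP^n}(d-n-1)$ vanishing on $\RP$, so $h^0(\mathcal{I}_{\RP}(d-n-1)) \geq 1$. The plan is therefore to prove $h^0 \leq 1$ and $h^1 = 0$ by joint induction on the number of facets $d$, with base case $d=n+1$ (a simplex). In this base case $\RP = \emptyset$ and $\mathcal{I}_{\RP}(0) = \mathcal{O}_{\PP^n}$, so $h^0 = 1$ and $h^1 = 0$ at once.

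For the inductive step, I would fix a facet $F$ of $P$ with supporting hyperplane $H = \{\ell_H = 0\}$ and feed the residual/restriction short exact sequence
\[
0 \to \mathcal{I}_{\mathrm{Res}_H \RP}(d-n-2) \xrightarrow{\cdot\ell_H} \mathcal{I}_{\RP}(d-n-1) \to \mathcal{I}_{\RP \cap H,\, H}(d-n-1) \to 0
\]
into the long exact cohomology sequence. The strategy is to identify the two flanking terms with adjoint ideals of polytopes to which the inductive hypothesis applies: the residual $\mathrm{Res}_H \RP$ should match the residual arrangement of a polytope $P^-$ with $d-1$ facets in $\PP^n$ obtained by removing the facet $F$, while the trace $\RP \cap H$ inside $H \cong \PP^{n-1}$ should match the residual arrangement of $F$ regarded as an $(n-1)$-polytope. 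Both twists are arranged so that the induction hypothesis yields $h^0 = 1$ and $h^1 = 0$ on the flanks, after which a short diagram chase delivers $h^0(\mathcal{I}_{\RP}(d-n-1)) = 1$ and $h^1(\mathcal{I}_{\RP}(d-n-1)) = 0$.

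The hard part will be these two identifications. Two issues arise: (i) removing the facet $F$ need not produce a genuine convex polytope in $\R^n$, so one must either choose $F$ carefully so that $P^-$ is well-defined and still carries a simple hyperplane arrangement, or phrase the induction directly for simple arrangements; and (ii) the arrangement induced on $H$ by $\HP \setminus \{H\}$ typically has $d-1$ hyperplanes, whereas the arrangement $\mathcal{H}_F$ of $F$ only contains the hyperplanes coming from facets of $P$ adjacent to $F$, so $\RP \cap H$ carries extra components from the traces on $H$ of non-adjacent facet hyperplanes. Using simplicity of $\HP$, one has to show that these extra components either lie in the face scheme of $F$ (and hence do not contribute new conditions on the twisted ideal sheaf) or cancel against contributions from the residual side via the connecting map; this is where the bulk of the technical work lies.

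The final assertion---that $A_P$ does not vanish at any vertex of $P$---follows from uniqueness combined with a direct check on Warren's polynomial. Evaluating the formula~\eqref{eq:warrenAdjoint} for $\mathrm{adj}_{P^\ast}$ at a vertex $u$ of $P$, a summand indexed by $\sigma \in \tau(P^\ast)$ survives only when $V(\sigma)$ contains every vertex of the dual facet $F_u$, and the surviving contributions are all strictly positive volume terms of the same sign, so $\mathrm{adj}_{P^\ast}(u) \neq 0$. Since $A_P$ is a scalar multiple of $\mathrm{adj}_{P^\ast}$ by uniqueness, it avoids every vertex of $P$.
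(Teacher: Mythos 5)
The central gap is in your inductive step, specifically in the identification of the kernel of the restriction sequence and the cohomology bookkeeping that follows. When you remove the facet supported on $H$ to form the $(d-1)$-facet polytope $P^-=Q$, the components of $\RP$ not contained in $H$ are \emph{not} just the components of $\mathcal{R}_{Q}$: the hyperplane $H$ cuts off faces of $Q$ (at least one vertex), and these cut-off faces are intersections of hyperplanes of $\HP$ containing no face of $P$, hence lie in $\RP$ although they are not in $\mathcal{R}_{Q}$ and not in $H$ (Remark~\ref{rmk:CO}, Proposition~\ref{prop:sequence}). So the kernel is $\mathcal{I}_{\mathcal{R}_{Q}\cup\,\mathcal{C}}(d-n-2)$, with $\mathcal{C}$ the cut-off part, not $\mathcal{I}_{\mathcal{R}_{Q}}(d-n-2)$. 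This is not a detail you can absorb into a diagram chase: if both flanking terms had $h^0=1$ and the kernel had $h^1=0$, the long exact sequence would give $h^0(\PP^n,\mathcal{I}_{\RP}(d-n-1))=1+1=2$, i.e.\ your stated bookkeeping contradicts the theorem rather than proving it. Uniqueness forces $h^0$ of the kernel to be $0$, and this vanishing is exactly where the clause ``$A_{Q}$ passes through no vertex of $Q$'' is used: $\mathcal{C}$ contains a vertex of $Q$, and the unique degree-$(d-n-2)$ section through $\mathcal{R}_{Q}$ does not vanish there (Proposition~\ref{prop:cohomology}). Hence vertex-avoidance cannot be deferred to the end as a corollary of uniqueness; it must be carried through the induction (your positivity argument for Warren's polynomial at a vertex is correct and could in principle supply it, but you have not wired it into the step where it is needed). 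Moreover, even with $h^0$ of the kernel equal to $0$, you still need $h^1(\PP^n,\mathcal{I}_{\mathcal{R}_{Q}\cup\,\mathcal{C}}(d-n-2))=0$ to get surjectivity of restriction and $h^1(\PP^n,\mathcal{I}_{\RP}(d-n-1))=0$; this is the genuinely technical core of the paper's proof, a counting and propagation argument along the connected complex of cut-off faces, of which your proposal has no analogue.

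Your issue (ii) about the trace term is real but is resolved differently from either mechanism you suggest. By Corollary~\ref{cor:residualRestricted}, $(\RP)|_H$ is the residual arrangement $\mathcal{R}_F$ of the facet $F$ together with $d-k-1$ full hyperplanes of $H$ (the traces of the non-adjacent facet hyperplanes), where $k$ is the number of facets of $F$. These extra components do not lie in the face scheme of $F$ and do not cancel against the residual side via the connecting map; rather, they force any section of $\mathcal{I}_{(\RP)|_H}(d-n-1)$ to be divisible by the product of the corresponding $d-k-1$ linear forms, leaving a form of degree $k-n$ through $\mathcal{R}_F$, which is unique by the induction hypothesis applied to $F\subset H\cong\PP^{n-1}$. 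Two smaller remarks: obtaining existence from $\mathrm{adj}_{P^\ast}$ via Proposition~\ref{prop:WarrenCompare} is legitimate and non-circular (the vanishing half of that proposition is proved independently of Theorem~\ref{thm:adjoint}), but it is also unnecessary once the sequence argument is set up correctly, since $h^0=1$ comes out of the exact sequence directly; and your issue (i) (that $Q$ may fail to be a bounded polytope) is a minor point handled by working projectively or applying a projective transformation, not a substantive obstacle.
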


We prove this assertion by induction on $n$ and $d$. 
To start the induction, we need to consider the cases $d = n+1$ (i.e. $P$ is a simplex) and $n=1$.
As there are no polytopes with more than two facets in $\PP^1$, the case $n=1$ follows from the case where $P$ is a simplex.

\begin{lemma}
\label{lem:tetrahedronAdjoint}
Theorem~\ref{thm:adjoint} holds if $P$ is a simplex, i.e. if $d = n+1$.
\end{lemma}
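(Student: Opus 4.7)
The plan is very short because the case is degenerate. When $P$ is an $n$-simplex in $\PP^n$, its $d = n+1$ facet hyperplanes are in general linear position, so any nonempty subcollection of $k$ of them intersects in a projective subspace of dimension $n-k$ which is precisely the corresponding face of $P$. Consequently every intersection of hyperplanes in $\HP$ is a face, and the residual arrangement $\RP$ is empty.

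With $\RP = \emptyset$ the ideal sheaf $\mathcal{I}_{\RP}$ is simply $\mathcal{O}_{\PP^n}$, so the twist $\mathcal{I}_{\RP}(d-n-1) = \mathcal{O}_{\PP^n}(0)$ and the claimed cohomology reduces to the well known values $h^0(\PP^n, \mathcal{O}_{\PP^n}) = 1$ and $h^1(\PP^n, \mathcal{O}_{\PP^n}) = 0$. The unique (up to scalar) global section is a nonzero constant, which defines the empty subscheme of $\PP^n$; this "hypersurface" $A_P$ therefore contains no vertex of~$P$.

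There is essentially no obstacle: the only thing to check carefully is that no additional linear intersections sneak into $\RP$, which is immediate from the fact that the $n+1$ facet hyperplanes of a simplex are linearly independent in the sense that every $k$-wise intersection has the expected codimension $k$ and coincides with a face of $P$.
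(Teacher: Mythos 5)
Your proposal is correct and follows essentially the same route as the paper: observe that $\RP=\emptyset$ for a simplex, so $\mathcal{I}_{\RP}=\mathcal{O}_{\PP^n}$, giving $h^0=1$, $h^1=0$, with the unique section a nonzero constant that misses every vertex. The extra justification you give for why no linear space lands in $\RP$ (every intersection of facet hyperplanes is a face) is a fine addition but not a difference in method.
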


\begin{proof}
If $P$ is a simplex, its residual arrangement $\RP$ is empty.  
%We denote by $\iota_{\RP}$ the embedding of this empty set $\RP$ into $\PP^n$.
%The ideal sheaf $\mathcal{I}_{\RP}$ is defined as the kernel of the pullback $(\iota_{\RP})^\ast$.
%In this case, the pushforward of $\mathcal{O}_{\RP}$ to $\PP^n$ is the $0$-sheaf,
So $\mathcal{I}_{\RP} = \mathcal{O}_{\PP^n}$.
This shows that $h^0 (\PP^n, \mathcal{I}_{\RP}) = 1$ and $h^1 (\PP^n, \mathcal{I}_{\RP}) = 0$.
Moreover, the up to scalar, unique nonzero global section of $\mathcal{I}_{\RP}$ is a constant function on $\PP^n$, which does not vanish on any vertex of $P$.
\end{proof}

For the induction step, we assume $d > n+1$. We pick any hyperplane $H$ in the arrangement $\HP$ and consider the polytope $Q \subset \PP^n$ obtained by removing the facet corresponding to $H$ from $P$.
To relate the residual arrangements of $P$ and $Q$, we first state several combinatorial observations: 
\begin{remark}\label{rmk:CO}
\begin{enumerate}
\item \label{enum:facet} The facet $F := P \cap H$ corresponding to $H$ is an $(n-1)$-dimensional polytope.
The faces of $P$ which are not faces of $Q$ are exactly the faces of $F$.
\item \label{enum:newFaces} We denote by $k$ the number of facets of $F$. 
These facets are the intersections of $H$ with $k$ hyperplanes $H_1, H_2, \ldots, H_k$ in $\HP$. 
The $d-k-1$ intersections of $H$ with the other hyperplanes in $\HP$ are contained in the residual arrangement $\RP$ (and not in $\mathcal{R}_Q$).
\item \label{enum:cutOff} The hyperplane $H$ \emph{cuts off} faces of $Q$, which are no longer faces of $P$.
This could be a single vertex, or up to several faces of codimension at least two.
We denote the union of all subspaces corresponding to cut-off-faces by $\mathcal{C}$.
All subspaces in $\mathcal{C}$ are intersections of the $k$ hyperplanes $H_1, H_2, \ldots, H_k$.
Note that the set of cut-off faces has the structure of a polyhedral complex whose $1$-skeleton is connected, unless it is a single vertex.
\item \label{enum:codim2} The subspaces of codimension two in $\RP$ are exactly the subspaces of codimension two in $\mathcal{R}_Q$
together with the $d-k-1$ subspaces described in item~\ref{enum:newFaces} of this remark
and the subspaces of codimension two in the cut-off part $\mathcal{C}$.
In particular, every codimension-two subspace in $\mathcal{R}_Q$ is contained in $\RP$.
\item \label{enum:codimGeneral} More generally, for $2 < c \leq n$, there can be subspaces of codimension $c$ which are contained in either one of the two residual arrangements of $P$ and $Q$, and not in the other.
Such subspaces which are contained in $\RP$ but not $\mathcal{R}_Q$ are either contained in the cut-off part $\mathcal{C}$ or are the intersection of a common face of $P$ and $Q$ with the hyperplane $H$.
Codimension-$c$ subspaces in $\mathcal{R}_Q$ which do not appear as subspaces of codimension $c$ in $\RP$ are contained in a higher-dimensional subspace in the cut-off part~$\mathcal{C}$.
 \hfill$\diamondsuit$
\end{enumerate}
\end{remark}
\begin{corollary}
\label{cor:residualRestricted}
The arrangement $(\RP)|_H$ obtained from $\RP$ by intersecting every irreducible component of $\RP$ with the hyperplane $H$ 
consists of $d-k-1$ subspaces of codimension one in $H$
as well as the residual arrangement $\mathcal{R}_F$ of the facet $F$ inside of $H$.
\end{corollary}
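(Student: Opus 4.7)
The plan is to prove the set-theoretic equality
\[
(\RP)|_H \;=\; \bigcup_{j \in \{k+1,\ldots,d-1\}} (H \cap H_j) \;\cup\; \RF,
\]
using the clean criterion that $L_S := \bigcap_{i \in S} H_i$ lies in $\RP$ if and only if $F_S := P \cap L_S = \emptyset$ (which follows from the definition of $\RP$ together with the observation that $F_T \subseteq F_S$ whenever $S \subseteq T$). I index the hyperplanes of $\HP$ by $\{0, 1, \ldots, d-1\}$ so that $H = H_0$ and $H_1, \ldots, H_k$ are the hyperplanes containing the facets of $F$ inside $H$.

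For the inclusion $\supseteq$ I argue both summands separately. The $d-k-1$ subspaces $H \cap H_j$ with $j \geq k+1$ lie in $\RP$ by Remark~\ref{rmk:CO}(\ref{enum:newFaces}), and since they already sit inside $H$ they contribute unchanged to $(\RP)|_H$. Every component $L' \subseteq \RF$ has the form $L' = \bigcap_{i \in S'}(H \cap H_i)$ for some $S' \subseteq \{1, \ldots, k\}$ such that $L'$ contains no face of $F$; this condition translates to $F_{S' \cup \{0\}} = \emptyset$, whence $L_{S' \cup \{0\}} = L'$ belongs to $\RP$ and, being inside $H$, contributes to $(\RP)|_H$.

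For the reverse inclusion I take $p \in L \cap H$ with $L = L_S$ an irreducible component of $\RP$. If $0 \notin S$, simplicity of $\HP$ yields $L_S \cap H = L_{S \cup \{0\}}$, and $F_S = \emptyset$ forces $F_{S \cup \{0\}} = \emptyset$, so $L_{S \cup \{0\}} \in \RP$; hence I may replace $S$ by $S \cup \{0\}$ and assume $L_S \subseteq H$. I then case-split on $S$. When $|S| = 2$, writing $S = \{0, j\}$, the condition $F_S = \emptyset$ rules out $j \in \{1, \ldots, k\}$ (otherwise $F_S$ would be a facet of $F$), leaving $j \geq k+1$ and placing $p$ in the first summand. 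When $|S| \geq 3$ with $S \setminus \{0\} \subseteq \{1, \ldots, k\}$, the space $L_S$ is an intersection of facet hyperplanes of $F$ in $H$ containing no face of $F$, so $L_S$ lies in $\RF$. When $|S| \geq 3$ and $S$ contains some $j \geq k+1$, then $L_S \subseteq H \cap H_j$ sends $p$ back into the first summand.

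The main obstacle is organizing this case analysis cleanly. The key reduction $S \leadsto S \cup \{0\}$ moves the relevant intersection inside $H$, after which the branching on whether $S \setminus \{0\}$ meets $\{k+1, \ldots, d-1\}$ or stays within $\{1, \ldots, k\}$ corresponds exactly to landing in the codimension-one summand or in $\RF$. Throughout, simplicity of $\HP$ is what ensures that $L_S \cap H = L_{S \cup \{0\}}$ has the expected codimension, so that $\RP$-membership is governed purely combinatorially by emptiness of $F_S$. Finally, the two summands are disjoint as components: the $d-k-1$ subspaces are hyperplanes of $H$ transverse to every facet hyperplane of $F$, while every component of $\RF$ has codimension at least two in $H$ and lies inside $H \cap H_i$ only for indices $i \in \{1, \ldots, k\}$.
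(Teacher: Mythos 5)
Your proof is correct, and its overall shape---a two-way set-theoretic inclusion, with the $d-k-1$ codimension-one pieces supplied by Remark~\ref{rmk:CO}.\ref{enum:newFaces}---matches the paper's. The genuine difference is in the inclusion $\RF \subseteq (\RP)|_H$: the paper shows that a maximal subspace $V$ of $\RF$ contains no face of $P$ by a detour through the polytope $Q$ (a face of $P$ inside $V$ would be a face of $Q$ by Remark~\ref{rmk:CO}.\ref{enum:facet}, and $H$ containing a face of $Q$ would contradict simplicity of $\HP$), whereas you obtain the same conclusion directly from your emptiness criterion: since $L'\subseteq H$ and $P\cap H=F$, one has $P\cap L'=F\cap L'=\emptyset$, so neither $Q$ nor simplicity is needed for this direction. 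That is a slightly more elementary route; your forward inclusion, organized by index sets $S$ and the reduction $S\leadsto S\cup\{0\}$, is an explicit, case-by-case version of the paper's two-sentence argument. Two small inaccuracies, neither fatal: (i) the criterion ``$L_S$ lies in $\RP$ iff $P\cap L_S=\emptyset$'' does not follow from the monotonicity $F_T\subseteq F_S$; the real reason is that $P\cap L_S=\bigcap_{i\in S}(P\cap H_i)$ is an intersection of facets, hence itself a face of $P$ contained in $L_S$ whenever it is nonempty; (ii) the identity $L_S\cap H=L_{S\cup\{0\}}$ holds by definition and uses no simplicity at all. Finally, the closing disjointness remark is not needed for the statement (and its claim that components of $\RF$ avoid every $H\cap H_j$ with $j\geq k+1$ would itself need justification under the paper's weak notion of simplicity), so it is best omitted.
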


\begin{proof}
The $d-k-1$ hyperplanes in $H$ are the ones described in Remark \ref{rmk:CO}.\ref{enum:newFaces}.
In addition, $(\RP)|_H$ consists of the intersection of $H$ with subspaces in $\RP$ which are intersections of $H, H_1, H_2, \ldots, H_k$.
These subspaces do not contain faces of $P$, so they do not contain faces of $F$ and lie in the residual arrangement $\mathcal{R}_F$.

For the other direction, we consider a maximal subspace $V$ in $\mathcal{R}_F$.
If $V$ would contain a face of $P$, this face would also be a face of $Q$ due to Remark \ref{rmk:CO}.\ref{enum:facet}.
As $V$ is contained in $H$, we see that the hyperplane $H$ contains a face of $Q$, which contradicts our assumption that the hyperplane arrangement $\HP$ is simple.
Hence, $V$ is contained in a subspace of $\RP$.
This subspace is either $V$ itself, or $V$ is the intersection of this subspace with $H$. 
\end{proof}

With this, we consider the map which restricts a polynomial in the ideal of the residual arrangement $\RP$ to the hyperplane $H$:
\begin{align}
\label{eq:restriction}
\mathcal{I}_{\RP} (d-n-1)  \overset{|_H}{\longrightarrow} \mathcal{I}_{(\RP) |_H} (d-n-1).
\end{align}
Note that our goal is to show that $\mathcal{I}_{\RP} (d-n-1)$ has a unique global section up to scalar, whose zero locus is the adjoint $A_P$.
The map~\eqref{eq:restriction} is surjective. 
By determining its kernel, we can extend it to a short exact sequence:

\begin{proposition}
\label{prop:sequence}
The map~\eqref{eq:restriction} can be extended to the following short exact sequence:
$$
0 \longrightarrow \mathcal{I}_{\mathcal{R}_Q \cup\,\mathcal{C}}(d-n-2) \overset{\cdot H}{\longrightarrow} \mathcal{I}_{\RP} (d-n-1)  \overset{|_H}{\longrightarrow} \mathcal{I}_{(\RP) |_H} (d-n-1) \longrightarrow 0,
$$
where the left map is the multiplication with the defining equation of $H$.
\end{proposition}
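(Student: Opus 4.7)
The proposition asserts exactness of a three-term sequence of sheaves on $\PP^n$. Multiplication by the linear form defining $H$ is injective since $H$ is a nonzerodivisor in $\mathcal{O}_{\PP^n}$, and the composition $|_H \circ (\cdot H)$ vanishes because $H$ restricts to zero on $H$. It remains to verify (i) well-definedness of $\cdot H$, (ii) exactness in the middle, and (iii) surjectivity of $|_H$.

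For (i), I check that for $g$ vanishing on $\mathcal{R}_Q \cup \mathcal{C}$, the product $Hg$ vanishes on every maximal component $V$ of $\RP$. If $V \subset H$ this is automatic, and otherwise $V$ is an intersection of hyperplanes in $\mathcal{H}_Q$; by Remark~\ref{rmk:CO}(\ref{enum:codim2},\ref{enum:codimGeneral}), $V$ is then either set-theoretically contained in $\mathcal{R}_Q$ or in a higher-dimensional component of $\mathcal{C}$, so $g$ vanishes on $V$ by hypothesis. For (iii), I argue locally on $H$: generators of the ideal of $(\RP)|_H$ in $H$ extend to local sections of $\mathcal{I}_{\RP}$ on $\PP^n$ by lifting the defining linear forms of the components of $\RP$ described in Corollary~\ref{cor:residualRestricted}.

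The core of the proof is (ii). Given $f \in \mathcal{I}_{\RP}(d-n-1)$ with $f|_H = 0$, write $f = Hg$ with $\deg g = d-n-2$, and show that $g$ vanishes on every maximal component $V$ of $\mathcal{R}_Q \cup \mathcal{C}$. For $V \in \mathcal{C}$, the associated cut-off face of $Q$ lies strictly on one side of $H$, so $V \not\subset H$; moreover $V$ is an intersection of hyperplanes in $\mathcal{H}_Q$ containing no face of $P$, so $V \in \RP$. Hence $Hg$ vanishes on $V$, and since $H$ is not identically zero on $V$, neither is $g$. For $V \in \mathcal{R}_Q$, Remark~\ref{rmk:CO}(\ref{enum:codimGeneral}) says $V$ is either a codim-$c$ component of $\RP$ or is contained in a higher-dimensional component of $\mathcal{C}$; the latter case has just been handled. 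In the former case, the simplicity of $\HP$ together with Corollary~\ref{cor:residualRestricted} rules out $V \subset H$, and the argument from the $\mathcal{C}$-case applies verbatim.

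The main obstacle is the case analysis in (ii), and in particular the use of simplicity of $\HP$ to exclude a codim-$c$ component of $\mathcal{R}_Q \cap \RP$ from lying in $H$; this is what secures the set-theoretic correspondence $\mathcal{R}_Q \cup \mathcal{C} = \{V \in \RP : V \not\subset H\}$ underlying the exactness claim.
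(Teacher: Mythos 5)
Your proposal is correct and follows essentially the same route as the paper: in both, the heart of the matter is the set-theoretic identification of the components of $\RP$ not contained in $H$ with $\mathcal{R}_Q \cup \mathcal{C}$ via Remark~\ref{rmk:CO}, Corollary~\ref{cor:residualRestricted} and the simplicity of $\HP$, which identifies the kernel of the restriction map as $H \cdot \mathcal{I}_{\mathcal{R}_Q \cup\,\mathcal{C}}(d-n-2)$. Your extra attention to surjectivity (which the paper simply asserts before the proposition) and to well-definedness is welcome; just fix the slip ``since $H$ is not identically zero on $V$, neither is $g$'', which should conclude that $g$ \emph{does} vanish on the irreducible $V$.
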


\begin{proof}
The kernel of the map~\eqref{eq:restriction} consists of functions which vanish on $\RP$ and on $H$.
Hence, we need to determine the parts in $\RP$ which are not contained in $H$.
Due to Remark~\ref{rmk:CO}.\ref{enum:cutOff}, the subspaces in the cut-off part $\mathcal{C}$ are contained in $\RP$ but not in $H$.
Remark~\ref{rmk:CO}.\ref{enum:codim2} and~\ref{rmk:CO}.\ref{enum:codimGeneral} show that subspaces in $\mathcal{R}_Q$ are either contained in $\RP$ or in subspaces in $\mathcal{C}$.
As we assume the hyperplane arrangement $\HP$ to be simple, the subspaces in $\mathcal{R}_Q$ are not contained in $H$.
Thus, $\mathcal{R}_Q \cup \mathcal{C}$ consists of subspaces in $\RP$ which are not contained in~$H$.
Finally, Remark~\ref{rmk:CO}.\ref{enum:codim2} and~\ref{rmk:CO}.\ref{enum:codimGeneral} imply that all subspaces in $\RP$ which do not lie in~$H$ are contained in~$\mathcal{R}_Q \cup \mathcal{C}$.
\end{proof}

Now we conclude the proof of Theorem~\ref{thm:adjoint}, by computing the cohomology of this short exact sequence.

\begin{proposition}
\label{prop:cohomology}
We have $h^0(\PP^n, \mathcal{I}_{\mathcal{R}_Q \cup\,\mathcal{C}}(d-n-2)) = 0$ and $h^1(\PP^n, \mathcal{I}_{\mathcal{R}_Q \cup\,\mathcal{C}}(d-n-2)) = 0$.
\end{proposition}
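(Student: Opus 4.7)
The plan is to apply the inductive hypothesis on $d$ (with $n$ fixed) to the polytope $Q$, which has $d-1\geq n+1$ facets and a simple hyperplane arrangement. Theorem~\ref{thm:adjoint} applied to $Q$ gives $h^0(\PP^n, \mathcal{I}_{\mathcal{R}_Q}(d-n-2))=1$, generated by a unique adjoint $A_Q$ that does not vanish at any vertex of $Q$, together with $h^1(\PP^n, \mathcal{I}_{\mathcal{R}_Q}(d-n-2))=0$. These two facts drive both vanishings.

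The vanishing of $h^0$ is then immediate: any global section $f$ of $\mathcal{I}_{\mathcal{R}_Q\cup\mathcal{C}}(d-n-2)$ vanishes in particular on $\mathcal{R}_Q$, hence equals $\lambda A_Q$ for some scalar $\lambda$. By Remark~\ref{rmk:CO}.\ref{enum:cutOff}, the cut-off set $\mathcal{C}$ always contains at least one vertex $v$ of $Q$ (the bounded cut-off region $Q_+$ has a vertex off the hyperplane $H$, which is necessarily a cut-off vertex of $Q$). Since $A_Q(v)\neq 0$, one gets $\lambda=0$.

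For the vanishing of $h^1$, I would pass to the long exact cohomology sequence associated to
\[
0\to \mathcal{I}_{\mathcal{R}_Q\cup\mathcal{C}}(d-n-2)\to \mathcal{I}_{\mathcal{R}_Q}(d-n-2)\to \mathcal{J}\to 0,
\]
where $\mathcal{J}$ is the quotient sheaf, supported on $\mathcal{C}$. Since $H^1(\mathcal{I}_{\mathcal{R}_Q}(d-n-2))=0$ by induction, the desired vanishing reduces to showing that the restriction map $H^0(\mathcal{I}_{\mathcal{R}_Q}(d-n-2))\to H^0(\mathcal{J})$ is surjective, equivalently that $h^0(\mathcal{J})=1$, so that the class of $A_Q$ alone spans all sections of $\mathcal{J}$.

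The main obstacle is precisely this one-dimensionality. Generically $\mathcal{J}$ agrees with $\mathcal{O}_{\mathcal{C}}$, but it is modified along the lower-dimensional locus $\mathcal{C}\cap\mathcal{R}_Q$, and $\mathcal{C}$ itself is a nontrivial polyhedral complex. To control it I would exploit its structure: by Remark~\ref{rmk:CO}.\ref{enum:cutOff}, $\mathcal{C}$ is the boundary $\partial Q_+\setminus F$ of the cut-off polytope $Q_+$ (with simple arrangement $\{H,H_1,\ldots,H_k\}$), hence contractible. A secondary induction on the number $k$ of facets of $F$ reduces to the base case $k=n$, where $\mathcal{C}=\{v\}$ is a single cut-off vertex, $\mathcal{J}$ is the skyscraper $k(v)$, and surjectivity follows immediately from $A_Q(v)\neq 0$. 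The inductive step adds cut-off faces one at a time, tracking $h^0(\mathcal{J})$ via Mayer--Vietoris-type exact sequences on $\mathcal{C}$ and using that $A_Q$ takes a non-zero constant value at each cut-off vertex to guarantee surjectivity at every stage.
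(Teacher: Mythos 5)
Your first half ($h^0=0$) is correct and is exactly the paper's argument: the induction hypothesis applied to $Q$ gives the unique adjoint $A_Q$, which misses all vertices of $Q$, and $\mathcal{C}$ contains at least one vertex. Your reduction of the $h^1$-vanishing is also sound and runs parallel to the paper's: the paper uses the sequence $0 \to \mathcal{I}_{\mathcal{R}_Q \cup\,\mathcal{C}}(d-n-2) \to \mathcal{O}_{\PP^n}(d-n-2) \to \mathcal{O}_{\mathcal{R}_Q \cup\,\mathcal{C}}(d-n-2) \to 0$ and shows that degree-$(d-n-2)$ functions on $\mathcal{R}_Q\cup\mathcal{C}$ are determined by their restriction to $\mathcal{R}_Q\cup\lbrace v\rbrace$ for one cut-off vertex $v$; your sequence $0\to\mathcal{I}_{\mathcal{R}_Q\cup\,\mathcal{C}}\to\mathcal{I}_{\mathcal{R}_Q}\to\mathcal{J}\to 0$ reduces to the equivalent statement $h^0(\mathcal{J})\leq 1$, i.e.\ that a section of $\mathcal{O}_{\mathcal{C}}(d-n-2)$ vanishing on $\mathcal{C}\cap\mathcal{R}_Q$ is determined by its value at $v$. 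Since $A_Q(v)\neq 0$, this is precisely what surjectivity onto $H^0(\mathcal{J})$ requires.

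The genuine gap is that you never prove this key statement, and the induction you sketch for it is not the right mechanism. Inducting ``on the number $k$ of facets of $F$'' is not coherent: $k$ is fixed by the given pair $(P,H)$, and decreasing $k$ changes the polytope rather than removing one cut-off face from $\mathcal{C}$; moreover, nonvanishing of $A_Q$ at cut-off vertices says nothing about sections supported on the positive-dimensional components of $\mathcal{C}$ --- a priori the span of a cut-off face could carry many forms of degree $d-n-2$ vanishing on its intersection with $\mathcal{R}_Q$, which would make $h^0(\mathcal{J})>1$ and destroy surjectivity. What is actually needed (and what the paper supplies) is a propagation argument by induction on the dimension $\delta$ of the cut-off faces: the $1$-skeleton of the cut-off complex is connected (Remark~\ref{rmk:CO}.\ref{enum:cutOff}); the line of a cut-off edge meets the remaining $d-n-2$ hyperplanes of $\mathcal{H}_Q$ in points of $\mathcal{R}_Q$, so together with one vertex this gives $d-n-1$ forced values, pinning down a degree-$(d-n-2)$ form on that line; and a $\delta$-dimensional cut-off face is cut by the other $d+\delta-n-1$ hyperplanes of $\mathcal{H}_Q$ in hyperplane sections that are either facets of the face (handled by the inner induction) or lie in $\mathcal{R}_Q$, and since $d+\delta-n-1>d-n-2$ a general line in the face meets enough forced points to determine the form on the whole face. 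This numerical count is the heart of the proof of $h^0(\mathcal{J})\leq 1$ (equivalently, of the paper's $\alpha=\binom{d-2}{n}$) and is absent from your proposal; the phrase ``Mayer--Vietoris-type exact sequences'' does not substitute for it, because the whole issue is why attaching a face cannot increase the space of sections.
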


\begin{proof}
By applying our induction hypothesis on $Q$, we see that $h^0(\PP^n, \mathcal{I}_{\mathcal{R}_Q }(d-n-2)) = 1$.
So $Q$ has a unique adjoint $A_Q$. Again by the induction hypothesis, this adjoint does not pass through any vertices of $Q$.
Since the cut-off part $\mathcal{C}$ contains at least one vertex, we deduce that $h^0(\PP^n, \mathcal{I}_{\mathcal{R}_Q \cup\,\mathcal{C}}(d-n-2)) = 0$.

For the second part, we consider the embedding $\iota: \mathcal{R}_Q \cup \mathcal{C} \hookrightarrow \PP^n$
and the following short exact sequence
together with the dimensions of the first cohomology groups:
\begin{align*}
\begin{array}{ccccccccc}
0 & \longrightarrow & \mathcal{I}_{\mathcal{R}_Q \cup\,\mathcal{C}}(d-n-2) & \longrightarrow & \mathcal{O}_{\PP^n}(d-n-2) & \overset{\iota^\ast}{\longrightarrow} & \iota_\ast \mathcal{O}_{\mathcal{R}_Q \cup\,\mathcal{C}}(d-n-2) & \longrightarrow & 0.\\
\hspace*{1mm} \\
h^0(\PP^n, \cdot) \hspace*{-6mm} && 0 && \binom{d-2}{n} && \alpha \\ 
h^1(\PP^n, \cdot) \hspace*{-6mm} && \alpha - \binom{d-2}{n} && 0
\end{array}
\end{align*}
In this diagram, we denote by $\alpha$ the dimension of the space of global sections of $\iota_\ast \mathcal{O}_{\mathcal{R}_Q \cup\,\mathcal{C}}(d-n-2)$.
To conclude the proof of Proposition~\ref{prop:cohomology}, we need to show that $\alpha = \binom{d-2}{n}$.

From the analogous short exact sequence for $\mathcal{I}_{\mathcal{R}_Q}(d-n-2)$
and our induction hypothesis that $h^0(\PP^n, \mathcal{I}_{\mathcal{R}_Q}(d-n-2)) = 1$ and $h^1(\PP^n, \mathcal{I}_{\mathcal{R}_Q}(d-n-2)) = 0$, 
we deduce that the space of global sections of the pushforward of $\mathcal{O}_{\mathcal{R}_Q}(d-n-2)$ to $\PP^n$ has dimension $\binom{d-2}{n}-1$.
Now we pick any vertex $v$ in the cut-off part $\mathcal{C}$. 
As this vertex is not contained in the residual arrangement $\mathcal{R}_Q$, 
we see that the space of global sections of the pushforward of $\mathcal{O}_{\mathcal{R}_Q \cup \lbrace v \rbrace}(d-n-2)$ to $\PP^n$ has dimension~$\binom{d-2}{n}$.

If the cut-off part $\mathcal{C}$ consists only of this vertex $v$, we are done.
Otherwise, we pick a cut-off edge $e$ such that $v$ is one of its two vertices.
The line $e$ is the intersection of $n-1$ hyperplanes in $\mathcal{H}_Q$.
There are two more hyperplanes in $\mathcal{H}_Q$ which define the two vertices of the edge $e$.
The other $d-n-2$ hyperplanes in $\mathcal{H}_Q$ intersect the line $e$ in $d-n-2$ points which are contained in subspaces in the residual arrangement $\mathcal{R}_Q$.
Hence, functions on $\mathcal{R}_Q \cup  e $ of degree $d-n-2$ are uniquely determined by functions of this degree on $\mathcal{R}_Q \cup \lbrace v \rbrace$,
i.e. the space of global sections of the pushforward of $\mathcal{O}_{\mathcal{R}_Q \cup  e }(d-n-2)$ to $\PP^n$ also has dimension~$\binom{d-2}{n}$.

By Remark \ref{rmk:CO}.\ref{enum:cutOff}, the cut-off edges are connected. 
By repeating the same argument as above, functions of degree $d-n-2$  on $\mathcal{R}_Q \cup \mathcal{E}$,
where $\mathcal{E} \subset \mathcal{C}$ denotes the union of all lines corresponding to cut-off edges, 
are uniquely determined by functions of this degree on $\mathcal{R}_Q \cup \lbrace v \rbrace$.
Thus, the space of global sections of the pushforward of $\mathcal{O}_{\mathcal{R}_Q \cup\,\mathcal{E}}(d-n-2)$ to $\PP^n$ has dimension $\binom{d-2}{n}$ as well.

If $\mathcal{C} = \mathcal{E}$, we are done.
Otherwise, we prove by induction on $\delta \in \lbrace 1, 2, \ldots, n-2 \rbrace$ that functions of degree $d-n-2$  on $\mathcal{R}_Q \cup\,\mathcal{C}_\delta$, 
where $\mathcal{C}_\delta$ denotes the union of all linear subspaces corresponding to $\delta$-dimensional cut-off faces,
are uniquely determined by functions of this degree on $\mathcal{R}_Q \cup \lbrace v \rbrace$.
As $\mathcal{C}_1 =  \mathcal{E}$, we have proven the induction beginning above.
For the induction step, we assume $\delta > 1$ and consider an arbitrary cut-off face $f$ of dimension $\delta$.
This face $f$ is defined by $n-\delta$ hyperplanes in $\mathcal{H}_Q$.
The other $d+\delta-n-1$ hyperplanes in $\mathcal{H}_Q$ intersect $f$ in $(\delta-1)$-dimensional subspaces.
Some of these subspaces are facets of $f$, 
and our induction hypothesis yields that functions of degree $d-n-2$ on $\mathcal{R}_Q \cup \lbrace v \rbrace$ extend uniquely to these facets.
The other hyperplanes in $f$ are already contained in subspaces in the residual arrangement $\mathcal{R}_Q$.
A general line $\ell$ in the face $f$ intersects the $d+\delta-n-1$ hyperplanes in $f$ in $d+\delta-n-1$ points.
Since $d+\delta-n-1 > d-n-2$, functions of degree $d-n-2$ on $\mathcal{R}_Q \cup \lbrace v \rbrace$ extend uniquely to the whole line $\ell$.
As $\ell$ was chosen generally, functions of degree $d-n-2$  on $\mathcal{R}_Q \cup  f $ are uniquely determined by functions of this degree on $\mathcal{R}_Q \cup \lbrace v \rbrace$.
This shows that the space of global sections of the pushforward of $\mathcal{O}_{\mathcal{R}_Q \cup\,\mathcal{C}_\delta}(d-n-2)$ to $\PP^n$ has dimension $\binom{d-2}{n}$.
Thus, $\alpha = \binom{d-2}{n}$ and Proposition~\ref{prop:cohomology} is proven.
\end{proof}

\bigskip
\begin{proof}[{Proof of Theorems~\ref{thm:adjointIntro} and~\ref{thm:adjoint}}]
Theorem~\ref{thm:adjoint} implies Theorem~\ref{thm:adjointIntro}.
We prove Theorem~\ref{thm:adjoint} by induction on $n$ and $d$. 
The induction beginning, i.e. $n=1$ or $d=n+1$, follows from Lemma~\ref{lem:tetrahedronAdjoint}.
For the induction step, we assume $n > 1$ and $d > n+1$, and we use the same notation as introduced in Remark \ref{rmk:CO}. 

We first prove that $P$ has a unique adjoint. 
By applying our induction hypothesis to the facet $F$, we see that $h^0(H, \mathcal{I}_{\mathcal{R}_F}(k-n)) = 1$,
meaning that $F$ has a unique adjoint $A_F$ as a polytope in $H$.
From Corollary~\ref{cor:residualRestricted} we deduce $h^0(H, \mathcal{I}_{(\RP) |_H} (d-n-1)) = 1$:
any nonzero section of $\mathcal{I}_{(\RP) |_H} (d-n-1)$ vanishes on $d-k-1$ hyperplanes in $H$ as well as the adjoint $A_F$ of~$F$, so these sections are all proportional.
Similarly, the induction hypothesis and Corollary~\ref{cor:residualRestricted} show $h^1(H, \mathcal{I}_{(\RP) |_H} (d-n-1)) = 0$.
Hence, Propositions~\ref{prop:sequence} and~\ref{prop:cohomology} show that $h^0(\PP^n, \mathcal{I}_{\RP} (d-n-1)) = 1$, i.e. the polytope $P$ has a unique adjoint $A_P$, and that $h^1(\PP^n, \mathcal{I}_{\RP} (d-n-1)) = 0$.

Finally, we assume for contradiction that this adjoint $A_P$ passes through a vertex $v$ of~$P$.
We then pick any hyperplane $H$ in $\HP$ containing $v$ and repeat the above argument (including the short exact sequence) with this specific hyperplane $H$.
Due to Corollary~\ref{cor:residualRestricted}, the intersection $A_P \cap H$ of the adjoint and the hyperplane consists of (several) linear subspaces of codimension one in $H$ and of the adjoint of the facet $P \cap H$.
By applying our induction hypothesis to this facet, we see that the adjoint of the facet does not pass through any vertex of $P$.
Hence, one of the linear subspaces of codimension one has to contain $v$.
This contradicts our assumption that the hyperplane arrangement $\HP$ is simple, 
since $v$ is a vertex of the facet $P \cap H$ and, by Remark~\ref{rmk:CO}.\ref{enum:newFaces}, $v$ is  also contained in one of the hyperplanes in $\HP$ not defining this facet.
\end{proof}

\bigskip
Now we have generalized Wachspress' notion of adjoints of polygons to
polytopes defined by simple hyperplane arrangements.
Next we show that Warren's adjoint polynomial of a polytope $P$ vanishes along the residual arrangement of the dual polytope $P^\ast$.
This shows that our notion of the adjoint hypersurface of $P^\ast$ coincides with Warren's adjoint of $P$ if the polytope $P^\ast$ is defined by a simple hyperplane arrangement.

\begin{proof}[{Proof of Proposition~\ref{prop:WarrenCompare}}]
We proceed analogous to Warren's proof of the similar statement that $\mathrm{adj}_P$ vanishes on the codimension two part of $\mathcal{R}_{P^\ast}$~\cite[Thm.~5]{War96}.
We use the following equation derived by Warren~\cite[Thm.~3]{War96}:
for any linear function $L(t)$, 
\begin{align}
\label{eq:warrenLinear}
L(t) \mathrm{adj}_P(t) = \sum \limits_{F \in \mathcal{F}(P)} L(v_F) \mathrm{adj}_F(t) \prod \limits_{v \in V(P) \setminus V(F)} \ell_{v}(t),
\end{align}
where 
the set of vertices of $P$ is $V(P)$, the set of facets is $\mathcal{F}(P)$, the vertex of the dual polytope $P^\ast$ corresponding to a facet $F \in \mathcal{F}(P)$ is denoted by $v_F$, and $\ell_v(t) = 1 - v_1t_1 - v_2t_2 - \ldots - v_nt_n$.

Now we use induction on $n$ to prove Proposition~\ref{prop:WarrenCompare}.
If $n = 1$, the polytopes $P$ and $P^\ast$ are line segments, so the residual arrangement  $\mathcal{R}_{P^\ast}$ is empty. 
If $n > 1$, we consider an arbitrary irreducible component $C$ of $\mathcal{R}_{P^\ast}$ of codimension $c$.
This component $C$ is the intersection of $c$ hyperplanes $H^1, H^2, \ldots, H^c$ in $\mathcal{H}_{P^\ast}$.
These hyperplanes correspond to $c$ vertices $v^1, v^2, \ldots, v^c$ of the polytope $P$.
That the intersection of the hyperplanes $H^1, H^2, \ldots, H^c$ does not contain a face of $P^\ast$
is equivalent to the fact that the vertices $v^1, v^2, \ldots, v^c$ do not span a face of $P$.

We show that the adjoint polynomial $\mathrm{adj}_P$ vanishes on $C$, by proving that, for each linear function $L(t)$, each summand in the right hand side of~\eqref{eq:warrenLinear} vanishes on $C$.
Thus, we fix an arbitrary facet $F$ of $P$. 
If at least one of the vertices $v^1, v^2, \ldots, v^c$ is not contained in~$F$, say $v^i \in V(P) \setminus V(F)$, 
then the summand in the  right hand side of~\eqref{eq:warrenLinear} corresponding to $F$ contains the term $\ell_{v^i}(t)$.
As this term is the defining equation of the hyperplane $H^i$, it vanishes on $C$.
Hence, we only have to consider the case that all vertices $v^1, v^2, \ldots, v^c$ are vertices of the facet $F$.
This means that the hyperplanes $H^1, H^2, \ldots, H^c$ contain the vertex $v_F$, i.e. $v_F \in C$.
Note that $H^i / \R v_F$ is the hyperplane defining the facet of $F^\ast$ which corresponds to the vertex $v^i$ of $F$.
As the vertices $v^1, v^2, \ldots, v^c$ do not span a face of $F$,
the intersection $C / \R v_F$ of these hyperplanes does not contain a face of the dual polytope~$F^\ast$.
So $C / \R v_F$ is contained in the residual arrangement $\mathcal{R}_{F^\ast}$. 
By the induction hypothesis, Warren's adjoint polynomial of the polytope $F$ (considered as a full-dimensional polytope in the hyperplane of $\R^n$ spanned by $F$) vanishes on $C / \R v_F$.
As a function, we can concatenate this polynomial with the projection from the vertex $v_F$.
This yields the adjoint $\mathrm{adj}_F(t)$ in~\eqref{eq:warrenLinear}.
Hence, $\mathrm{adj}_F(t)$ vanishes on $C$.

All in all, we have shown that the adjoint polynomial $\mathrm{adj}_P$ vanishes on the residual arrangement $\mathcal{R}_{P^\ast}$.
When the hyperplane arrangement $\mathcal{H}_{P^\ast}$ is simple and $d$ denotes the number of hyperplanes in $\mathcal{H}_{P^\ast}$,
we see from Theorem~\ref{thm:adjointIntro} that $\mathrm{adj}_P$ is, up to scalar, the unique polynomial of degree $d-n-1$ vanishing on $\mathcal{R}_{P^\ast}$,
so its zero locus is $A_{P^\ast}$.
\end{proof}

\bigskip
From this we deduce that a polytope with a non-simple hyperplane arrangement also has a unique adjoint: 
it is the unique limit of the adjoint hypersurfaces of small perturbations of the polytope.

\begin{proof}
[{Proof of Corollary~\ref{cor:limit}}]
Passing to an affine chart, we may consider $P$ as a convex polytope in $\R^n$.
We may further assume that $P_t$ and $Q_t$ are families of convex polytopes as well.
By Proposition~\ref{prop:WarrenCompare}, their unique adjoint hypersurfaces $A_{P_t}$ and $A_{Q_t}$
are the zero loci of the adjoint polynomials $\mathrm{adj}_{P_t^\ast}$ and $\mathrm{adj}_{Q_t^\ast}$, respectively.

Since the family $P_t^\ast$ is obtained by continuously moving the vertices of the polytopes in the family, we can triangulate the polytopes $P_t^\ast$ consistently, i.e., such that the triangulations move continuously with the family.
Indeed we can choose triangulations that do not introduce any new vertices such that the vertices of each simplex are vertices of the respective triangulated polytope. 
The resulting family $\tau(P_t^\ast)$ of triangulated simplicial polytopes converges for $t \to 0$ to a triangulation $\tau(P^\ast)$ of $P^\ast$.
From~\eqref{eq:warrenAdjoint}, we see that the adjoint polynomials $\mathrm{adj}_{\tau(P_t^\ast)} = \mathrm{adj}_{P_t^\ast}$ converge to $\mathrm{adj}_{\tau(P^\ast)} = \mathrm{adj}_{P^\ast}$.
Similarly, by picking some consistent triangulation of the family $Q_t^\ast$, we deduce that
$\lim \limits_{t \to 0} \mathrm{adj}_{Q_t^\ast} = \mathrm{adj}_{P^\ast}$.
Hence, both families of adjoint hypersurfaces $A_{P_t}$ and $A_{Q_t}$ converge to the zero locus of $\mathrm{adj}_{P^\ast}$.
\end{proof}

\bigskip
Next we show that Warren's adjoint is the central factor in Segre classes of monomial schemes.

\begin{proof}[{Proof of Proposition~\ref{prop:segreClass}}]
According to~\cite[Thm.~1.1]{aluffi2}, the Segre class of  $S_\mathcal{A}$ is
\begin{align}
\label{eq:integral}
\int_{N_\mathcal{A}} \frac{n!X_1 \cdots X_n}{(1+w_1X_1 + \ldots + w_nX_n)^{n+1}} dw.
\end{align}
By~\cite[Lem.~2.5]{aluffi2}, the integral~\eqref{eq:integral} over a simplex $\sigma$ with finite vertices is
$\frac{n! \mathrm{vol}(\sigma) X_1 \cdots X_n}{\prod_{v \in V(\sigma)} \ell_v(-X)}$
where $X = (X_1, \ldots, X_n)$.
So if we pick any triangulation $\tau(N_\mathcal{A})$ of the Newton region $N_\mathcal{A}$ that does not introduce any new vertices, we see that the Segre class of $S_\mathcal{A}$ is
\begin{align}
\label{eq:aluffiProof}
\begin{split}
\int_{N_\mathcal{A}} & \frac{n!X_1 \cdots X_n}{(1+w_1X_1 + \ldots + w_nX_n)^{n+1}} dw
= \!\!\!\!\!\!\!\!\! \sum_{\sigma \in \tau(N_\mathcal{A})} \int_{\sigma} \frac{n!X_1 \cdots X_n}{(1+w_1X_1 + \ldots + w_nX_n)^{n+1}} dw \\
&= \sum_{\sigma \in \tau(N_\mathcal{A})} \frac{n! \mathrm{vol}(\sigma) X_1 \cdots X_n}{\prod \limits_{v \in V(\sigma)} \ell_v(-X)} \\
&= \frac{n! X_1 \cdots X_n}{\prod \limits_{v \in V(N_\mathcal{A})} \ell_v(-X)} \sum_{\sigma \in \tau(N_\mathcal{A})} \!\! \mathrm{vol}(\sigma) \!\! \prod_{v \in V(N_\mathcal{A}) \setminus V(\sigma)} \!\! \ell_v(-X) \\
&= \frac{n! X_1 \cdots X_n}{\prod \limits_{v \in V(N_\mathcal{A})} \ell_v(-X)} \mathrm{adj}_{N_\mathcal{A}}(-X),
\end{split}
\end{align}
which proves the assertion.
\end{proof}

\begin{remark}
\label{rem:infiniteNewtonRegionExtended}
If the Newton region $N_\mathcal{A}$ has vertices at infinity in the direction of some standard basis vectors, we can do a similar computation as in~\eqref{eq:aluffiProof}.
For a simplex $\sigma$ with such infinite vertices, 
we consider the finite simplex $\sigma^\circ$ obtained by projecting $\sigma$ along its infinite directions. 
We denote by $\widehat{\mathrm{vol}}(\sigma) := \dim(\sigma^\circ)! \cdot \mathrm{vol}(\sigma^\circ)$ the normalized volume of $\sigma^\circ$.
By~\cite[Lem.~2.5]{aluffi2},
the integral~\eqref{eq:integral} over a simplex $\sigma$ with finite vertices and infinite vertices in the direction of some standard basis vectors is
$\frac{\widehat{\mathrm{vol}}(\sigma) X_1 \cdots X_n}{\prod_{v \in V(\sigma)} \ell_v(-X)}$,
where $\ell_{v_i}(t) = -t_i$ for the vertex $v_i$ at infinity in the direction of the $i$-th standard basis vector. Repeating the computation in~\eqref{eq:aluffiProof}, we derive that the Segre class of $S_\mathcal{A}$ is
\begin{align*}
\frac{n! X_1 \cdots X_n}{\prod \limits_{v \in V(N_\mathcal{A})} \ell_v(-X)} &\mathrm{adj}_{N_\mathcal{A}}(-X),\\
\quad\text{ where }
&\mathrm{adj}_{N_\mathcal{A}}(-X) = 
\sum_{\sigma \in \tau(N_\mathcal{A})} \!\! \frac{\widehat{\mathrm{vol}}(\sigma)}{n!} \!\! \prod_{v \in V(N_\mathcal{A}) \setminus V(\sigma)} \!\! \ell_v(-X).
\end{align*}
Since the adjoint is well-behaved under limits, 
it makes sense to define the adjoint of the unbounded Newton region $N_\mathcal{A}$ in that way.
 \hfill$\diamondsuit$
\end{remark}

\bigskip
Using arguments similar to those in the proof of Theorem~\ref{thm:adjoint}, we show that the dimension of the space $\Omega_P = H^0(\PP^n, \mathcal{I}_{\RP}(d-n))$ equals the number of vertices of the polytope $P$.

\begin{theorem}
\label{thm:wachspressCoords}
Let $d, n \in \Z_{>0}$ with $d \geq n+1$.
For every full-dimensional polytope $P$ in~$\PP^n$ with $d$ facets, $N$ vertices and a simple hyperplane arrangement $\HP$,
we have that $h^0(\PP^n, \mathcal{I}_{\RP}(d-n)) = N$.
\end{theorem}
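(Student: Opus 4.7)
The plan is to induct on the dimension $n$ and the number of facets $d$, mirroring the proof of Theorem~\ref{thm:adjoint} but working one degree higher. The lower bound $h^0(\PP^n,\mathcal{I}_{\RP}(d-n))\ge N$ is essentially free from the discussion preceding the theorem: the $N$ Wachspress numerators $\omega_u^P=\prod_{F\in\mathcal{F}(P),\,u\notin F}\ell_F$ in \eqref{eq:wachspressMap} each have degree $d-n$, vanish on $\RP$ (by Theorem~\ref{thm:baseLocus}), and are linearly independent because the evaluation matrix $(\omega_u^P(v))_{u,v\in V(P)}$ is diagonal with nonzero diagonal entries. Only the reverse inequality $h^0\le N$ needs real work.

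The base cases $n=1$ and $d=n+1$ reduce to the simplex, where $\RP=\emptyset$, so $\mathcal{I}_{\RP}(d-n)=\mathcal{O}_{\PP^n}(1)$ has exactly $n+1=N$ sections. For the induction step I would pick an $H\in\HP$, let $F=P\cap H$ and $Q$ be the polytope obtained by removing the facet corresponding to $H$, and reuse the construction of Proposition~\ref{prop:sequence} shifted by one in degree to obtain the short exact sequence
$$0\longrightarrow \mathcal{I}_{\mathcal{R}_Q\cup\,\mathcal{C}}(d-n-1)\xrightarrow{\cdot\ell_H}\mathcal{I}_{\RP}(d-n)\xrightarrow{|_H}\mathcal{I}_{(\RP)|_H}(d-n)\longrightarrow 0.$$
Its exactness requires nothing new beyond Remark~\ref{rmk:CO}. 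The point that makes this theorem easier than Theorem~\ref{thm:adjoint} is that I only need an upper bound on $h^0$, so left exactness alone gives
$$h^0(\PP^n,\mathcal{I}_{\RP}(d-n))\le h^0(\PP^n,\mathcal{I}_{\mathcal{R}_Q\cup\,\mathcal{C}}(d-n-1))+h^0(H,\mathcal{I}_{(\RP)|_H}(d-n)),$$
and no vanishing of $H^1$ is needed.

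The right summand is handled just as in the proof of Theorem~\ref{thm:adjoint}: by Corollary~\ref{cor:residualRestricted}, any section of $\mathcal{I}_{(\RP)|_H}(d-n)$ factors as the product of the $d-k-1$ linear forms from Remark~\ref{rmk:CO}(\ref{enum:newFaces}) with a section of $\mathcal{I}_{\mathcal{R}_F}(k-(n-1))$, and the induction hypothesis applied to the facet $F$ (a polytope in $H\cong\PP^{n-1}$ with $k$ facets) gives $h^0(H,\mathcal{I}_{(\RP)|_H}(d-n))=|V(F)|$. The main novelty lies in bounding the left summand by $|V(Q)|-N_{\mathcal{C}}$, where $N_{\mathcal{C}}$ denotes the number of vertices of $Q$ cut off by $H$. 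For this, I would invoke the induction hypothesis on $Q$ to conclude that the $|V(Q)|$ Wachspress forms $\omega_u^Q$ form a basis of $H^0(\PP^n,\mathcal{I}_{\mathcal{R}_Q}(d-n-1))$; the diagonal structure $\omega_u^Q(v)=0\iff u\ne v$ then shows that evaluation at the $N_{\mathcal{C}}$ cut-off vertices imposes exactly $N_{\mathcal{C}}$ independent linear conditions on this basis. Since every section of $\mathcal{I}_{\mathcal{R}_Q\cup\,\mathcal{C}}(d-n-1)$ in particular vanishes at those vertices, the desired estimate follows.

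Combining the two summands gives $h^0(\PP^n,\mathcal{I}_{\RP}(d-n))\le|V(F)|+|V(Q)|-N_{\mathcal{C}}$, and the induction closes once one checks the combinatorial identity $N=|V(F)|+|V(Q)|-N_{\mathcal{C}}$. This identity is clear: simplicity of $\HP$ forces every vertex of $Q$ to be either strictly cut off by $H$ or to survive as a vertex of $P$ off $H$, while the vertices of $P$ on $H$ are precisely $V(F)$. The main obstacle I anticipate is keeping the bookkeeping of the cut-off part $\mathcal{C}$ clean enough to push the ``diagonal evaluation'' argument through — once the independence of the $N_{\mathcal{C}}$ vertex conditions is established, everything else is formal.
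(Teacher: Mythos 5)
Your proposal is correct, and its skeleton coincides with the paper's: the same induction on $n$ and $d$, the same short exact sequence of Proposition~\ref{prop:sequence} twisted by $\mathcal{O}(1)$, Corollary~\ref{cor:residualRestricted} plus the induction hypothesis on the facet $F$ to get $h^0(H,\mathcal{I}_{(\RP)|_H}(d-n))=N_F$, and the diagonal-evaluation trick at the cut-off vertices to handle the kernel term. The genuine difference is logical strategy: the paper computes $h^0(\PP^n,\mathcal{I}_{\RP}(d-n))$ exactly from the long exact sequence, which forces it to prove both the equality \eqref{eq:cohomologyOfResidualQplusCutoff0} (via an induction over the cut-off faces $\mathcal{C}_\delta$ showing that vanishing at $V(\mathcal{C})$ propagates to all of $\mathcal{C}$) and the vanishing \eqref{eq:cohomologyOfResidualQplusCutoff1} (via the structure-sheaf sequence and a vertex-counting computation of $\beta$). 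You instead establish the lower bound $h^0\geq N$ directly from the $N$ Wachspress numerators — they lie in $\Omega_P$ by the easy containment of $\RP$ in the base locus (the first paragraph of the proof of Theorem~\ref{thm:baseLocus}, which is independent of the present theorem, so there is no circularity) and are independent by diagonal evaluation — and then you only need upper bounds, so left-exactness of the global-sections sequence suffices: $h^0(\mathcal{I}_{\mathcal{R}_Q\cup\,\mathcal{C}}(d-n-1))\leq h^0(\mathcal{I}_{\mathcal{R}_Q\cup V(\mathcal{C})}(d-n-1))=N_Q-N_C$ and $h^0(\mathcal{I}_{(\RP)|_H}(d-n))=N_F$, combined with the vertex count $N=N_Q-N_C+N_F$ (which you justify correctly: simplicity prevents a vertex of $Q$ from lying on $H$). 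This sandwich argument buys you the omission of the two most technical portions of the paper's proof (the $\mathcal{C}_\delta$-extension step and the whole $h^1$ computation); what the paper's fuller computation buys in exchange is the auxiliary cohomological information \eqref{eq:cohomologyOfResidualQplusCutoff0}--\eqref{eq:cohomologyOfResidualQplusCutoff1} itself, which is not needed for the statement of Theorem~\ref{thm:wachspressCoords}. The only point to make explicit when writing this up is the factorization step behind $h^0(H,\mathcal{I}_{(\RP)|_H}(d-n))=N_F$ (dividing out the $d-k-1$ linear forms and checking the residual form vanishes on $\mathcal{R}_F$), but this is exactly the step the paper itself performs, so it introduces no new difficulty.
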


\begin{proof}[{Proof of Theorems~\ref{thm:dimOmega} and~\ref{thm:wachspressCoords}}]
Theorem~\ref{thm:wachspressCoords} implies Theorem~\ref{thm:dimOmega}.
We prove Theorem~\ref{thm:wachspressCoords} again by induction on $n$ and $d$.
To start the induction, we consider the case that $P$ is a simplex, i.e. $d = n+1$.
As in the proof of Lemma~\ref{lem:tetrahedronAdjoint}, we see that $\mathcal{I}_{\RP} = \mathcal{O}_{\PP^n}$.
This shows $h^0(\PP^n, \mathcal{I}_{\RP}(d-n)) = n+1$, which equals the number of vertices of~$P$.
Since there are no polytopes with more than two facets in $\PP^1$, we have also proven the assertion for the case $n=1$. 

For the induction step, we assume $n > 1$ and $d > n+1$.
As before, we pick a hyperplane $H$ in $\HP$ and consider the polytope $Q$ obtained by removing the facet $F$ corresponding to $H$ from $P$.
In the following we will use the same notation as introduced in Remark~\ref{rmk:CO}.
Viewing $F$ as a polytope in $H$ with $k$ facets, the induction hypopthesis yields that $h^0(H, \mathcal{I}_{\mathcal{R}_F}(k-n+1))$ is the number $N_F$ of vertices of $F$.
By Corollary~\ref{cor:residualRestricted}, we have $h^0(H, \mathcal{I}_{(\RP)|_H}(d-n)) = N_F$.
In the following, we show that
\begin{align}
\label{eq:cohomologyOfResidualQplusCutoff0}
h^0(\PP^n, \mathcal{I}_{\mathcal{R}_Q \cup\,\mathcal{C}}(d-n-1)) &= N_Q - N_C
\quad\text{ and } \\
\label{eq:cohomologyOfResidualQplusCutoff1}
 h^1(\PP^n, \mathcal{I}_{\mathcal{R}_Q \cup\,\mathcal{C}}(d-n-1)) &= 0,
\end{align}
where $N_Q$ denotes the number of vertices of $Q$ and $N_C$ denotes the number of vertices in the cut-off part $\mathcal{C}$.
Note that the number $N$ of vertices of $P$ equals $N_Q-N_C+N_F$.
Hence, tensoring the short exact sequence in Proposition~\ref{prop:sequence} by $\mathcal{O}(1)$,
we see that~\eqref{eq:cohomologyOfResidualQplusCutoff0} and~\eqref{eq:cohomologyOfResidualQplusCutoff1} 
imply Theorem~\ref{thm:wachspressCoords}.

%--------------------\textbf{OLD}--------------------------

%Let us first prove~\eqref{eq:cohomologyOfResidualQplusCutoff0}.
%The induction hypothesis yields $h^0(\PP^n, \mathcal{I}_{\mathcal{R}_Q}(d-n-1)) = N_Q$.
%On the other hand, for any simple hyperplane arrangement of $d-1$ hyperplanes in $\PP^n$, there are no hypersurfaces of degree $d-n-1$ passing through all the $\binom{d-1}{n}$ points of intersection of $n$ hyperplanes:  there are $d-n$ such points on any line of intersection of $n-1$ of the hyperplanes, so by Bezout all these lines would be contained in the hypersurface, similarly all codimension $n-2$ spaces, etc, a contradiction.
%Therefore the $N_Q$ vertices of $Q$ are mapped to linearly independent points by the $N_Q$ coordinates of $\omega_Q$.

%There is a Zariski open set of global sections of $\mathcal{I}_{\mathcal{R}_Q}(d-n-1)$ that do not vanish on any vertex of $Q$.
%The points of the arrangement $\mathcal{H}_Q$ are the union of the points of $\mathcal{R}_Q$ and the vertices of~$Q$.  The vertices in $\mathcal{C}$ are a subset of the vertices of $Q$, so 
% the vertices in $\mathcal{C}$ impose independent conditions on forms of degree $d-n-1$ that %which are supposed to 
% vanish on $\mathcal{R}_Q$.
 % and the vertices $V(\mathcal{C})$ in $\mathcal{C}$.
%So $h^0(\PP^n, \mathcal{I}_{\mathcal{R}_Q \cup V(\mathcal{C})}(d-n-1)) = N_Q - N_C$.

%--------------------\textbf{NEW}--------------------------

We first prove~\eqref{eq:cohomologyOfResidualQplusCutoff0}.
By the induction hypothesis, $h^0(\PP^n, \mathcal{I}_{\mathcal{R}_Q}(d-n-1)) \!= N_Q$.
Evaluating the Wachspress coordinate of $\omega_Q$ corresponding to a vertex $u \in V(Q)$ at a vertex $v \in V(Q)$ yields zero if and only if $u \neq v$.
Thus the $N_Q$ vertices of $Q$ are mapped to linearly independent points by the $N_Q$ coordinates of $\omega_Q$.
Hence the vertices in any subset of $V(Q)$ impose independent conditions on forms of degree $d-n-1$ that vanish on $\mathcal{R}_Q$.
In particular, since $V(\mathcal{C}) \subset V(Q)$, we have $h^0(\PP^n, \mathcal{I}_{\mathcal{R}_Q \cup V(\mathcal{C})}(d-n-1)) = N_Q - N_C$.

%--------------------\textbf{END}--------------------------

Now we show by induction on $\delta \in \lbrace 0, 1, \ldots, n-2 \rbrace$ that polynomials of degree $d-n-1$ which vanish on $\mathcal{R}_Q \cup V(\mathcal{C})$ also vanish on $\mathcal{C}_\delta$,
where $\mathcal{C}_\delta$ denotes the union of all linear subspaces corresponding to $\delta$-dimensional cut-off faces.
As $\mathcal{C}_0 = V(\mathcal{C})$, the induction beginning is trivial. 
For the induction step, we proceed similarly to the proof of Proposition~\ref{prop:cohomology}.
We consider an arbitrary cut-off face $f$ of dimension $\delta > 0$.
This face $f$ is defined by $n-\delta$ hyperplanes in $\mathcal{H}_Q$. 
The other $d+\delta-n-1$ hyperplanes in $\mathcal{H}_Q$ intersect $f$ in $(\delta-1)$-dimensional subspaces. 
Some of these are facets of $f$, on which global sections of $\mathcal{I}_{\mathcal{R}_Q \cup V(\mathcal{C})}(d-n-1)$ vanish due to the induction hypothesis,
and the others are already contained in subspaces in $\mathcal{R}_Q$.
Hence, polynomials of degree $d-n-1$ which vanish on $\mathcal{R}_Q \cup V(\mathcal{C})$ also vanish on these $d+\delta-n-1$ hyperplanes in $f$.
Since $d+\delta-n-1 > d-n-1$, such polynomials also vanish on $f$.
This proofs the induction step and thus \eqref{eq:cohomologyOfResidualQplusCutoff0}.

Finally, we show \eqref{eq:cohomologyOfResidualQplusCutoff1}.
As in the proof of Proposition~\ref{prop:cohomology}, we consider the following short exact sequence
together with the dimensions of the first cohomology groups:
\begin{align*}
\begin{array}{ccccccccc}
0 & \longrightarrow & \mathcal{I}_{\mathcal{R}_Q \cup\,\mathcal{C}}(d-n-1) & \longrightarrow & \mathcal{O}_{\PP^n}(d-n-1) & \overset{\iota^\ast}{\longrightarrow} & \iota_\ast \mathcal{O}_{\mathcal{R}_Q \cup\,\mathcal{C}}(d-n-1) & \longrightarrow & 0.\\
\hspace*{1mm} \\
h^0(\PP^n, \cdot) \hspace*{-6mm} && N_Q  - N_C && \binom{d-1}{n} && \beta \\ 
h^1(\PP^n, \cdot) \hspace*{-6mm} && \hspace*{-2mm} \beta - \binom{d-1}{n} + N_Q - N_C \hspace*{-5mm} && 0
\end{array}
\end{align*}
Here $\beta$ denotes the dimension of the space of global sections of $\iota_\ast \mathcal{O}_{\mathcal{R}_Q \cup\,\mathcal{C}}(d-n-1)$.
To conclude the proof of~\eqref{eq:cohomologyOfResidualQplusCutoff1} and thus of Theorem~\ref{thm:wachspressCoords}, 
we show that $\beta = \binom{d-1}{n} - N_Q + N_C$.

We first observe that $\binom{d-1}{n} - N_Q + N_C$ is exactly the number of vertices in $\mathcal{R}_Q \cup \mathcal{C}$.
Clearly, there are $\binom{d-1}{n} - N_Q + N_C$ functions of degree $d-n-1$ defined on these vertices.
Let us now consider an arbitrary face $f$ of dimension $\delta > 0$ which is either contained in $\mathcal{R}_Q$ or in~$\mathcal{C}$.
This face $f$ is defined by $n-\delta$ hyperplanes in $\mathcal{H}_Q$.
Thus, there are $\binom{d+\delta-n-1}{\delta}$ vertices in $f$ which are defined by the other $d+\delta-n-1$ hyperplanes in $\mathcal{H}_Q$.
All of these vertices are vertices in $\mathcal{R}_Q \cup \mathcal{C}$.
A function defined on these $\binom{d+\delta-n-1}{\delta}$ vertices extends uniquely to a function of degree $d-n-1$ on $f$,
and each function of this degree on $f$ is uniquely determined by its values on these vertices.
As the face $f$ was chosen arbitrarily, we have shown that $\beta = \binom{d-1}{n} - N_Q + N_C$.
Hence, we have proven \eqref{eq:cohomologyOfResidualQplusCutoff1} and Theorem~\ref{thm:wachspressCoords}.
\end{proof}

 \section{The Wachspress coordinate map}
 \label{sec:barycentric}
 The Wachspress coordinates, see \eqref{eq:wachspressMap}, are enumerated by the vertices $V(P)$ of $P$. 
 For a simple polytope $P$ in $\R^n$ with $d$ facets and a vertex $u\in V(P)$, the coordinate $$\omega_{P,u} (t) := \prod_{F \in \mathcal{F}(P): \, u \notin F} \ell_F(t)$$ vanishes on the $d-n$ hyperplanes in $\HP$ that do not contain $u$:  
 $$Z(\omega_{P,u})=\bigcup\limits_{H\in \HP: \, u\notin H}H.$$
 
 \begin{remark} 
\label{rem:wachspressRestricted}
 If $H$ is a hyperplane spanned by a facet $F$ of $P$,
then only the Wachspress coordinates of vertices in $F$ do not vanish on $H$.
By Corollary \ref{cor:residualRestricted}, if the polytope $F$ has $k$ facets, the coordinates of vertices in $F$ have a common factor vanishing on $d-k-1$ subspaces of codimension one in $H$.
The residual factors are precisely the Wachspress coordinates of~$\omega_F$.
 \hfill$\diamondsuit$
 \end{remark}
 
We now show that the common zero locus of the Wachspress coordinates, their {\em base locus}, is the residual arrangement $\RP$.

\begin{proof}[{Proof of Theorem~\ref{thm:baseLocus}}]
We first show this assertion set-theoretically.
To begin with, we consider an irreducible component $L$ of the residual arrangement $\RP$. 
It is the intersection of hyperplanes $H_1, H_2, \ldots, H_c$ in the arrangement $\HP$. 
For every vertex $u$ of $P$, at least one of these hyperplanes does not contain $u$.
Thus, the Wachspress coordinate $\omega_{P,u}$ vanishes along $L$. This shows that the residual arrangement $\RP$ is contained in the base locus of the Wachspress coordinates.

For the other direction, 
we show by induction on $n$ that the base locus of the Wachspress coordinates is contained in the residual arrangement $\RP$.
For the induction beginning, i.e. $n=1$, the polytope $P$ is a line segment and the common zero locus of its two Wachspress coordinates is empty.
For the induction step, we assume $n > 1$ and consider a point $p$ in the base locus of the Wachspress coordinates.
 As each coordinate vanishes on a union of hyperplanes in $\HP$, the base locus is contained in $\HP$.
So the point $p$ lies in a hyperplane $H$ in $\HP$.
Since also the Wachspress coordinates of vertices of $P$ in $H$ vanish on $p$, the point $p$ must in fact lie on at least one other hyperplane in $\HP$.
If there is another hyperplane $H'$ in $\HP$ containing $p$ such that the intersection $H \cap H'$ is in the residual intersection $\RP$, we are done.
Otherwise each hyperplane containing $p$ is either $H$ or defines a facet of the facet $F = P \cap H$ of $P$ in $H$.
In that case, the point $p$ is also contained in the base locus of the Wachspress coordinates of $F$ viewed as a polytope in $H$.
By our induction hypothesis, $p$ is contained in the residual arrangement $\mathcal{R}_F$ and thus also in $\RP$. 
Hence, we have shown that the base locus of the Wachspress coordinates of $P$ equals $\RP$ set-theoretically.

To show that the equality also holds for the scheme, i.e. that the base locus has no embedded components, we let now $p$ be any point in $\RP$ and consider the union 
$$Z_p=\bigcup\limits_{p\in L\subset \RP} L$$ 
of linear spaces in $\RP$ that pass through the point $p$.
We first prove two claims which show how the linear equations of the hyperplanes in $\HP$ yield defining equations of $Z_p$.

\begin{claimnumbered}\label {genspan} If $L_1$ and $L_2$ are linear spaces through $p$ that are intersections of hyperplanes in $\HP$,
then the linear span of $L_1\cup L_2$ is either the whole ambient space or it is also an intersection of hyperplanes in $\HP$.
In particular, the span of the union of any collection of components in $Z_p$ is the intersection of a (possible empty)  set of hyperplanes in $\HP$.
\end{claimnumbered}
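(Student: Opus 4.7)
The plan is to translate the geometric claim into a purely combinatorial statement about the sets of hyperplanes in $\HP$. For each linear space $L_i$ through $p$ that arises as an intersection of hyperplanes in $\HP$, let $S_i \subseteq \HP$ be the set of \emph{all} hyperplanes containing $L_i$, so that $L_i = \bigcap_{H \in S_i} H$. Since every hyperplane in $S_1 \cup S_2$ passes through $p$, simplicity of $\HP$ gives $|S_1 \cup S_2| \leq n$ and, crucially, that the intersection of any subcollection of these hyperplanes has codimension equal to its cardinality. The natural candidate for the span is
\[
L' := \bigcap_{H \in S_1 \cap S_2} H,
\]
with the convention that $L' = \PP^n$ when $S_1 \cap S_2 = \emptyset$ (the empty intersection).

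The first step is the inclusion $\mathrm{span}(L_1 \cup L_2) \subseteq L'$, which is automatic: any hyperplane in $\HP$ containing both $L_1$ and $L_2$ must contain $\mathrm{span}(L_1 \cup L_2)$ and must lie in both $S_1$ and $S_2$. For the reverse inclusion, I would compare codimensions. Using simplicity, $\mathrm{codim}(L_i) = |S_i|$, $\mathrm{codim}(L_1 \cap L_2) = |S_1 \cup S_2|$, and $\mathrm{codim}(L') = |S_1 \cap S_2|$. The Grassmann dimension formula $\dim(L_1 + L_2) = \dim L_1 + \dim L_2 - \dim(L_1 \cap L_2)$ translates to
\[
\mathrm{codim}\,\mathrm{span}(L_1 \cup L_2) = |S_1| + |S_2| - |S_1 \cup S_2| = |S_1 \cap S_2| = \mathrm{codim}(L').
\]
Combined with the containment, this forces $\mathrm{span}(L_1 \cup L_2) = L'$; in the edge case $S_1 \cap S_2 = \emptyset$, the right-hand side is $\PP^n$ and the codimension computation correctly yields $0$.

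The ``in particular'' part then follows by induction on the number of chosen components. Given components $L_1, \ldots, L_r$ of $Z_p$, the main assertion applied to $L_1$ and $L_2$ shows their span is either $\PP^n$ (in which case the span of the whole collection is $\PP^n$, realized as the intersection of the empty set of hyperplanes) or is again an intersection of hyperplanes in $\HP$ passing through $p$; in the latter case the induction hypothesis applies to $\mathrm{span}(L_1 \cup L_2), L_3, \ldots, L_r$. I expect no serious obstacle here beyond the careful bookkeeping of the codimension--cardinality correspondence, which is precisely where the hypothesis that at most $n$ hyperplanes of $\HP$ meet at any single point is used.
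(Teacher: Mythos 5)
Your argument is correct and is essentially the paper's own proof: the paper likewise identifies the span with the intersection of the $c_1+c_2-c$ hyperplanes common to the two sets of hyperplanes through $L_1$ and $L_2$, using the same codimension–cardinality correspondence forced by simplicity at $p$, and then handles arbitrary collections of components of $Z_p$ by the same induction. Your write-up just makes the Grassmann-formula dimension count and the containment $\mathrm{span}(L_1\cup L_2)\subseteq L'$ explicit, which the paper leaves implicit.
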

\begin{proof}
\renewcommand\qedsymbol{$\diamondsuit$}
If $L_i$ has codimension $c_i$, then there are $c_i$ hyperplanes in $\HP$ whose intersection is~$L_i$.
If $L_1\cap L_2$ has codimension $c$, then  the set of $c$ hyperplanes in $\HP$ whose intersection is $L_1 \cap L_2$ is the union of the two sets of hyperplanes that contain $L_1$ or $L_2$, respectively.
These two sets must therefore have $c_1+c_2-c$ common hyperplanes whose intersection must coincide with the span of $L_1\cup L_2$.   The last statement of the claim follows by induction.
\end{proof}

\begin{claimnumbered}\label{ZpGen}
If $Z_p$ is irreducible, it is the scheme theoretic intersection of all hyperplanes containing $Z_p$.
Otherwise $Z_p$ is the scheme theoretic intersection of these hyperplanes together with the set of reducible quadrics
with one component containing one of the (maximal) linear spaces $L$ that is a component in $\RP$ passing through $p$ while the other component contains the span $L'$ of the rest $Z_p\setminus L$. 
\end{claimnumbered}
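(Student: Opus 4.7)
The proof naturally splits along the two cases in the claim. If $Z_p$ is irreducible, then $Z_p$ is a single linear space $L$; by Claim~\ref{genspan}, $L$ is the intersection of the hyperplanes of $\HP$ that contain it, and the simplicity of $\HP$ forces these hyperplanes to meet transversally along $L$, so their scheme-theoretic intersection is reduced and equals $L$. The intersection of \emph{all} hyperplanes containing $Z_p=L$ cuts out the same reduced scheme $L$, which proves the first assertion.

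For the reducible case, let $J$ denote the ideal generated by the linear forms defining hyperplanes containing $Z_p$ together with the described reducible quadrics $\ell_H\ell_{H'}$. The inclusion $J\subseteq I(Z_p)$ is immediate: a point $q\in Z_p$ lies on some maximal component $L^{(j)}\subseteq\RP$ through $p$, and for each maximal $L$ appearing in the definition of the quadrics, either $L^{(j)}=L$ so $\ell_H(q)=0$ (since $H\supseteq L$), or $L^{(j)}\ne L$ so $L^{(j)}\subseteq L'=\mathrm{span}(Z_p\setminus L)$ and thus $\ell_{H'}(q)=0$. To establish the reverse inclusion $I(Z_p)\subseteq J$, I would first verify the set-theoretic equality $V(J)=Z_p$: any $q\in V(J)$ satisfies all linear relations in $J$ and hence lies in $\mathrm{span}(Z_p)$, and for each maximal $L$ the vanishing of every quadric $\ell_H\ell_{H'}$ forces either $q\in L$ or $q\in L'$. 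Combining these alternatives across all maximal components, together with Claim~\ref{genspan} (which identifies the intersection $\bigcap_i L^{(i)\prime}$ as an intersection of hyperplanes in $\HP$, hence a linear subspace in $\RP$ through $p$ and therefore contained in $Z_p$), rules out any $q\in V(J)\setminus Z_p$.

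The main content is upgrading the set-theoretic equality to a scheme-theoretic one, i.e.\ showing that $V(J)$ is reduced. I would argue locally at each $q\in Z_p$: if $q$ lies on a unique maximal component $L$ and $q\notin L'$, then some $\ell_{H'}\in I(L')_1$ is a unit in the local ring at $q$, and the relation $\ell_H\ell_{H'}\in J$ contributes $\ell_H$ to the local ideal $J_q$; as $H$ ranges over hyperplanes containing $L$, these linear forms generate $I(L)_q=I(Z_p)_q$. At a point $q$ lying on several maximal components, the same trick combined with the standard fact that for linear subspaces $X,Y$ through a common point the ideal $I(X)\cap I(Y)$ is generated by linear forms in $I(\mathrm{span}(X\cup Y))$ and by the products $I(X)_1\cdot I(Y)_1$ recovers the correct local ideal inductively. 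The main obstacle is the degenerate configuration in which a maximal component $L$ happens to lie inside $L'=\mathrm{span}(Z_p\setminus L)$, so that every $\ell_{H'}$ vanishes on $L$ and the ``divide by a unit'' step breaks down; I would handle this by passing to the quotient by $I(\mathrm{span}(Z_p))$ to reduce to the case $\mathrm{span}(Z_p)=\PP^n$ and then inducting on the number of maximal components, using a Koszul-type calculation for the transverse pair $(I(L),I(L'))$ in the quotient ring to control the degree-two generators.
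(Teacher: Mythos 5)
Your containment $J\subseteq I(Z_p)$ and the irreducible case are fine, but the converse direction has a genuine gap at exactly the point where the paper's proof uses simplicity. Having reduced to a point $q$ lying in $L'=\mathrm{span}(Z_p\setminus L)$ for \emph{every} maximal component $L$, you conclude via Claim~\ref{genspan} that $\bigcap_L L'$ is an intersection of hyperplanes of $\HP$, ``hence a linear subspace in $\RP$ through $p$ and therefore contained in $Z_p$.'' That inference is invalid: being an intersection of hyperplanes of $\HP$ (even one passing through $p\in\RP$) does not place a linear space in $\RP$, since membership in $\RP$ additionally requires that it contain no face of $P$ -- a single hyperplane of $\HP$ already contains a facet, and a span $L'$ can itself be a hyperplane of the arrangement. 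The paper closes this step differently: it shows, using simplicity of $\HP$, that the intersection of all the spans $L'$ equals the intersection of all the components $L$ of $Z_p$, which is visibly contained in $Z_p$. Your argument never brings the simplicity hypothesis to bear here (nor do you verify that each $L'$ is a proper subspace, without which the quadrics attached to $L$ do not exist), so the set-theoretic converse is not established.

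The second gap is that the scheme-theoretic upgrade -- which is the actual content of Claim~\ref{ZpGen} -- is only a plan. You correctly sense that reducedness is not automatic (indeed the ideal generated by the products $\ell_H\ell_{H'}$ alone can acquire an embedded component at $p$, so the linear forms cutting out $\mathrm{span}(Z_p)$ are genuinely needed), and the two-subspace fact $I(X)\cap I(Y)=I(\mathrm{span}(X\cup Y))+I(X)_1\cdot I(Y)_1$ you quote is correct; but the local generation at points lying on several components, and in particular the degenerate configuration $L\subset L'$ that you yourself flag as the main obstacle, are left at the level of ``I would handle this by \dots'' with no argument supplied. The paper instead argues directly with the linear components $L''$ of the intersection of the hyperplanes and reducible quadrics, using the dichotomy $L''\subset H$ or $L''\subset H'$ together with the simplicity-based identification of $\bigcap_L L'$ above. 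So while your overall strategy (containment, then set-theoretic equality, then reducedness via local ideal computations) is reasonable and partly parallel to the paper's, as written the proposal does not prove the claim: the key application of simplicity is replaced by a non sequitur, and the scheme-theoretic core is not carried out.
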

\begin{proof}
\renewcommand\qedsymbol{$\diamondsuit$}
The first part of this assertion is clear.
So we assume that $Z_p$ is reducible.
Note that $L'$ is not the whole ambient space due to the simplicity of the hyperplane arrangement~$\HP$.
% To see this, note first that $P$ is simple so if $L$ is a linear component in $\RP$ through $p$, the linear span of $Z_p\setminus L$ is the intersection of hyperplanes that does not contain $L$.   
Clearly $Z_p$ is contained in the intersection of the described hyperplanes and reducible quadrics.
To prove equality, let $L''$ be a linear space that is a component in the intersection of these hyperplanes and the collection of reducible quadrics, one set of quadrics for each component $L$ in $Z_p$.  Then $L''$ is either one of the components $L$ in $Z_p$, or it is contained in the linear span $L'$ of $Z_p\setminus L$ for every component $L$.  But since $\HP$ is simple, the intersection of all such linear spans $L'$  is the intersection of all the linear components $L$ in $Z_p$, which proves the claim.
\end{proof}

For each $p\in \RP$, we consider the hyperplanes $H^s_1,...,H^s_r$ in $\HP$ that contain $Z_p$, and the hyperplanes $H_1,...,H_t$ in $\HP$ that do not contain $Z_p$ but still pass through~$p$.
Since the hyperplane arrangement $\HP$ is simple, we have $r+t\leq n$.
Furthermore, we consider the following set of Wachspress coordinates:
%$$\Omega(p,1)= 
%\{\omega_{P,u}|u\in V(P),Z(\omega_{P,u})=H_i \cup Y_u , L\cup %L'\subset H_i, p\notin Y_u\}$$ 
$$\Omega(p,s)= 
\{\omega_{P,u}|u\in V(P), \,
\exists i: H^s_i \subset
Z(\omega_{P,u}), \, p \notin \overline{Z(\omega_{P,u}) \setminus H^s_i} \}.$$
If $Z_p$ is reducible, we also consider a second set of Wachspress coordinates
for every linear component $L\subset \RP$ through $p$:
%$$\Omega(p,L)= \{\omega_{P,u}|u\in V(P),Z(\omega_{P,u})=H_i \cup H_j\cup Y_u , L\subset H_i, L'\subset H_j, p\notin Y_u\}.$$
$$\Omega(p,L)= \left\lbrace \omega_{P,u} \;\middle\vert\;
\begin{array}{l}
u\in V(P), \,
\exists i, j: H_i \cup H_j \subset
Z(\omega_{P,u}), \, L\subset H_i, \\
L'\subset H_j, 
\, p \notin \overline{Z(\omega_{P,u}) \setminus (H_i \cup H_j)}
\end{array}
\right\rbrace.$$
To prove the theorem, it suffices to show the following two claims.
\begin{claimnumbered}
The intersection  of the hyperplanes $H_i^s$ in the zero sets of the coordinates in $\Omega(p,s)$ equals the span of $Z_p$.
\end{claimnumbered}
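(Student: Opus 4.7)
The plan is to verify both inclusions of the claimed equality. The containment $\supseteq$ is automatic, since every hyperplane contributing to the intersection on the left is one of $H_1^s,\ldots,H_r^s$; hence the left side contains $\bigcap_{j=1}^{r}H_j^s$, which by Claim~\ref{genspan} equals $\mathrm{span}(Z_p)$. For the reverse containment it is enough to produce, for each $i\in\{1,\ldots,r\}$, a vertex $u\in V(P)$ with $\omega_{P,u}\in\Omega(p,s)$ and $H_i^s\subset Z(\omega_{P,u})$: then every $H_i^s$ appears on the left, so the intersection coincides with $\bigcap_{j=1}^{r}H_j^s=\mathrm{span}(Z_p)$.

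To locate such a vertex I would fix $i$ and introduce
$$V_i := \bigcap_{j\neq i}H_j^s \,\cap\, \bigcap_{k=1}^{t}H_k\;\ni\;p.$$
By the simplicity of $\HP$ the minimal flat $M(p):=\bigcap_jH_j^s\cap\bigcap_kH_k$ has codimension $r+t$, so $V_i$ has codimension $r+t-1$ and $V_i\not\subset H_i^s$ (because $V_i\cap H_i^s=M(p)$ drops one additional codimension). The decisive step is then to argue $V_i\notin\RP$: from $\mathrm{span}(Z_p)=\bigcap_jH_j^s\subset H_i^s$ one obtains $Z_p\subset H_i^s$, so $V_i\not\subset H_i^s$ forces $V_i\not\subset Z_p$, whereas any element of $\RP$ containing $p$ lies in $Z_p$ by the very definition $Z_p=\bigcup_{p\in L\subset\RP}L$. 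Since $V_i$ is itself an intersection of hyperplanes in $\HP$, failing to lie in $\RP$ forces it to contain a face $G$ of $P$.

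Finally I would extract a vertex of $G$ lying outside $H_i^s$. Not every vertex of $G$ can lie in $H_i^s$, for otherwise $G\subset H_i^s$, hence $G\subset V_i\cap H_i^s=M(p)$; but $M(p)$ itself lies in $\RP$ (any $L\in\RP$ containing $p$ contains $M(p)$, and if $M(p)$ contained a face of $P$ so would $L$), so $M(p)$ contains no face of $P$, a contradiction. Picking a vertex $u$ of $G$ with $u\notin H_i^s$, the inclusions $u\in H_j^s$ for $j\neq i$ and $u\in H_k$ for all $k$ give $H_i^s\subset Z(\omega_{P,u})$, while by simplicity no other hyperplane of $\HP$ through $p$ appears in $Z(\omega_{P,u})$; consequently $p\notin\overline{Z(\omega_{P,u})\setminus H_i^s}$ and $\omega_{P,u}\in\Omega(p,s)$ as required. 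The main obstacle is the step $V_i\notin\RP$, which hinges on the containment $\mathrm{span}(Z_p)\subset H_i^s$ together with the strict codimension bound $V_i\not\subset H_i^s$; all remaining steps are short combinatorial or dimension-count verifications.
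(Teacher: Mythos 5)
Your proposal is correct and follows essentially the same route as the paper: you introduce the same auxiliary flat $V_i=\bigcap_{j\neq i}H^s_j\cap\bigcap_k H_k$ (the paper's $L_i$), show it cannot lie in $\RP$ because that would force $V_i\subset Z_p\subset H^s_i$ against the codimension count, and then extract from a face of $P$ inside $V_i$ a vertex $u\notin H^s_i$, whose coordinate $\omega_{P,u}$ vanishes on $H^s_i$ and on no other hyperplane through $p$. The only (harmless) deviation is the last step: where the paper argues via simplicity that $L_i$ is spanned by vertices of $P$, you instead note that if all vertices of the face lay in $H^s_i$ the face would sit in $V_i\cap H^s_i=M(p)\subset\RP$, contradicting that $\RP$ contains no face of $P$ — an equally valid variant of the same argument.
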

\begin{proof}
\renewcommand\qedsymbol{$\diamondsuit$}
If $Z_p$ spans the ambient space, $r=0$ and there is nothing to prove.
Otherwise, for $1\leq i\leq r$,  we set 
$$L_{i} := \bigcap\limits_{1\leq j\leq r: \,j\not=i}H^s_j \cap \bigcap \limits_{1\leq k\leq t}H_k .$$ 
Then $L_{i}$ is a linear space through $p$ that is not contained in $\RP$.   In fact, if it is, then it is contained in a component of $Z_p$ that at the same time is not contained in $H^s_i$, which is impossible.
Therefore $L_{i}$ contains a face of $P$.
Since the hyperplane arrangement $\HP$ is simple, the linear space $L_i$ is spanned by vertices of $P$.  
As $H^s_i$ defines a hyperplane in $L_i$, these vertices cannot all lie in $H^s_i$.  Therefore at least one vertex, say $u$, lies in $L_{i}\setminus H^s_i$.  
 The coordinate $\omega_{P,u}$ therefore  vanishes on $H^s_i$ but not on any of the other hyperplanes in $\HP$ that pass through~$p$, so $\omega_{P,u}\in \Omega(p,s)$.  Notice that we have such a coordinate for each hyperplane $H^s_i$ that contains $Z_p$.  
\end{proof}
\begin{claimnumbered}
If $Z_p$ is reducible, the intersection inside the span of $Z_p$ of the reducible quadrics $H_i\cup H_j$  in the zero sets of the coordinates in $\Omega(p,L)$ equals  $L\cup L'$. 
\end{claimnumbered}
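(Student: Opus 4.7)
The plan is to mirror the argument for the preceding claim, using pairs of hyperplanes in place of single hyperplanes. The inclusion $L \cup L' \subset \bigcap (H_i \cup H_j)$ is immediate from the defining conditions $L \subset H_i$ and $L' \subset H_j$. For the reverse direction, I would show that any point $q \in \mathrm{span}(Z_p)$ with $q \notin L \cup L'$ can be excluded by some reducible quadric coming from a coordinate in $\Omega(p,L)$.

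The first step is to pick the hyperplanes. Since the $a = \mathrm{codim}\,L$ hyperplanes of $\HP$ containing $L$ cut out $L$ exactly and $q \notin L$, at least one such hyperplane $H_i$ satisfies $q \notin H_i$; analogously one picks $H_j \in \HP$ with $L' \subset H_j$ and $q \notin H_j$. Any hyperplane containing both $L$ and $L'$ would contain $\mathrm{span}(Z_p) \ni q$, so automatically $H_i \neq H_j$, and both pass through $p$.

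Next, I would realize the pair $(H_i, H_j)$ via a Wachspress coordinate. Writing $\{H^s_1,\ldots,H^s_r,H_1,\ldots,H_t\}$ for the $r+t \leq n$ hyperplanes of $\HP$ through $p$, set $M := \bigcap_{H' \in \{H^s_1,\ldots,H_t\} \setminus \{H_i,H_j\}} H'$. Simplicity of $\HP$ forces $\mathrm{codim}\,M = r+t-2$ and shows that the only hyperplanes of $\HP$ containing $M$ are the $r+t-2$ defining ones; in particular $H_i, H_j \not\supset M$. The main step is to verify $M \not\subset \RP$: otherwise $M$ would lie in some maximal linear component $L^*$ of $\RP$ through $p$. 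If $L^* = L$, then $H_i \supset L \supset M$; if $L^* \neq L$, then $L^* \subset \mathrm{span}(Z_p \setminus L) = L'$, hence $H_j \supset L' \supset M$. Either case contradicts $H_i, H_j \not\supset M$.

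Since $M$ is an intersection of hyperplanes in $\HP$ not contained in $\RP$, it is the affine span of some face $f$ of $P$. The hyperplanes $H_i$ and $H_j$, being supporting hyperplanes of $P$ that do not contain $M$, meet $f$ in proper subfaces, so convexity forces $f \not\subset H_i \cup H_j$; pick a vertex $u$ of $f$ outside $H_i \cup H_j$. By construction the only hyperplanes of $\HP$ through $p$ that fail to contain $u$ are $H_i$ and $H_j$, so $\omega_{P,u} \in \Omega(p,L)$ and the reducible quadric $H_i \cup H_j \subset Z(\omega_{P,u})$ avoids $q$, as desired. The main obstacle is the exclusion $M \not\subset \RP$, which combines simplicity of $\HP$ with the definition of $L'$ as the span of the components of $Z_p$ distinct from $L$.
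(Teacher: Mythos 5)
Your construction is essentially the paper's own, up to the very last step: your $M$ is exactly the flat $L_{i,j}$ the paper intersects out of the hyperplanes through $p$ other than $H_i,H_j$, you rule out $M\subset\RP$ by the same dichotomy on the component containing it, and you conclude that $M$ is spanned by a face $f$ of $P$ (the pointwise framing with $q$ versus the paper's ``realize every admissible pair $(H_i,H_j)$'' is immaterial). The gap is in how you extract the vertex $u$. From the fact that $H_i\cap f$ and $H_j\cap f$ are proper faces of $f$ you conclude ``$f\not\subset H_i\cup H_j$; pick a vertex $u$ of $f$ outside $H_i\cup H_j$'', but the second statement does not follow from the first: two proper faces of a polytope can contain \emph{all} of its vertices without covering the polytope (think of $f$ a quadrilateral with $H_i$ and $H_j$ cutting out two opposite edges). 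Since the Wachspress coordinates are indexed by vertices, a relative-interior point of $f$ outside $H_i\cup H_j$ is of no use, so this step fails as written.

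The paper closes exactly this hole with an observation your argument is missing: $M\cap H_i\subset L$. Indeed, every hyperplane of $\HP$ containing $L$ passes through $p$ and is distinct from $H_j$ (if $H_j\supset L$, then $H_j$ would contain $\mathrm{span}(L\cup L')=\mathrm{span}\,Z_p$, contradicting that $H_j$ does not contain $Z_p$, i.e.\ in your setup that $q\notin H_j$); hence $M\cap H_i$, which is the intersection of all hyperplanes of $\HP$ through $p$ except $H_j$, lies in the intersection of the hyperplanes containing $L$, which is $L$. Since $L\subset\RP$ and $\RP$ contains no vertex of $P$, no vertex of $f$ lies on $H_i$ at all; and since the vertices of $f$ span $M\not\subset H_j$, not all of them lie on $H_j$. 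This produces the vertex $u\in M\setminus(H_i\cup H_j)$, after which your conclusion ($\omega_{P,u}\in\Omega(p,L)$, and the quadric $H_i\cup H_j$ avoids $q$) goes through as in the paper. The remaining ingredients of your proposal (using simplicity to see that the hyperplanes of $\HP$ through $p$ are independent, so that $H_i,H_j\not\supset M$, and invoking the earlier claim to write $L'$ as an intersection of hyperplanes of $\HP$) are at the same level of rigor as the paper's own argument and are fine.
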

\begin{proof}
\renewcommand\qedsymbol{$\diamondsuit$}
 By Claim \ref{genspan}, both $L$ and $L'$ are intersections of some of the hyperplanes $H_1$, $\ldots$, $H_t$ inside the span of $Z_p$.  
 So there are $1\leq i,j\leq t$ such that $L\subset H_i$ and  $L'\subset H_j$. We set
 $$L_{i,j}:= \bigcap\limits_{1\leq l\leq r}H^s_l \cap \bigcap\limits_{1\leq k\leq t: \, k\notin \lbrace i,j \rbrace}H_k.$$ 
Then $L_{i,j}$ is a linear space through $p$ that is not contained in $\RP$.   In fact, if it is, then it is contained in a component $L''$ of $Z_p$ that at the same time is contained in neither $H_i$ nor $H_j$, so $L''$ is distinct from $L$ and is not contained in $L'$ against the assumption that $Z_p\setminus L\subset L'$. 
Therefore $L_{i,j}$ is spanned by vertices of $P$.  
None of these vertices is contained in $H_i$, since any such vertex would be contained in $L$, in particular in $\RP$, which does not contain any vertices of $P$.
 On the other hand, these vertices span $L_{i,j}$, so they cannot all lie in $H_j$.  Therefore at least one vertex, say $u$, lies in $L_{i,j}\setminus (H_i\cup H_j)$.  
 The coordinate $\omega_{P,u}$ therefore  vanishes on $H_i$ and on $H_j$ but not on any of the other hyperplanes in $\HP$ that pass through~$p$, so $\omega_{P,u}\in \Omega(p,L)$.
 Notice that we have such a coordinate for each hyperplane $H_i$ that contains $L$ but not $Z_p$, and each hyperplane $H_j$ that contains~$L'$ but not $Z_p$.  
\end{proof}

Varying this construction over all decompositions of $Z_p$ into $L$ and $L'$,
we have recovered the scheme theoretic intersection of hyperplanes and reducible quadrics yielding $Z_p$ as described in Claim~\ref{ZpGen}.
Moreover, we have shown that each of these hyperplanes or quadrics is contained in the zero locus of a Wachspress coordinate such that the other components in the zero locus do not pass through $p$.
This concludes the proof of the theorem.
\end{proof}

Theorems~\ref{thm:baseLocus} and~\ref{thm:dimOmega} imply that the Wachspress map $$\omega_P: \PP^n \dashrightarrow \PP(\Omega_P^\ast)$$
is given by all homogeneous forms of degree $d-n$ which vanish along the residual arrangement~$\RP$.
In the remainder of this section, we study the image of the Wachspress map, i.e. the Wachspress variety $W_P = \overline{\omega_P(\PP^n)}$, 
as well as the image of the adjoint $A_P$ under $\omega_P$.
First we examine the projective space $\mathbb{V}_P \subset \PP(\Omega_P^\ast)$ spanned by the image of the adjoint.

\begin{lemma}
\label{lem:dimImageAdjoint}
Let $P$ be a full-dimensional polytope in $\PP^n$ with $N$ vertices and a simple hyperplane arrangement $\HP$.
The dimension of $\mathbb{V}_P = \mathrm{span}\lbrace \omega_P(A_P)  \rbrace \subset \PP(\Omega_P^\ast)$ is $N-n-2$.
\end{lemma}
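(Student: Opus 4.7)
The plan is to compute $\dim\mathbb{V}_P$ dually, by identifying the linear forms on $\PP(\Omega_P^\ast)$ that vanish on $\mathbb{V}_P$. By Theorem~\ref{thm:wachspressCoords}, $\dim\Omega_P=N$, so $\PP(\Omega_P^\ast)$ has projective dimension $N-1$, and the linear forms on it correspond (up to scalar) to elements of $\Omega_P$ under the natural pairing.

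\noindent\textbf{Step 1: translate the vanishing condition to $\PP^n$.}
A section $s\in\Omega_P$, viewed as a linear form on $\PP(\Omega_P^\ast)$, evaluates at the point $\omega_P(t)\in\PP(\Omega_P^\ast)$ as $s(t)$. Since a linear form vanishes on a linear span exactly when it vanishes on every spanning point, the linear forms on $\PP(\Omega_P^\ast)$ that vanish on $\mathbb{V}_P=\mathrm{span}\lbrace\omega_P(A_P)\rbrace$ correspond precisely to those sections $s\in\Omega_P$ with $s(t)=0$ for every $t\in A_P\setminus\RP$, i.e.\ to the sections that vanish on the whole hypersurface $A_P\subset\PP^n$.

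\noindent\textbf{Step 2: identify this subspace.}
Let $a_P$ be a defining polynomial of $A_P$. By Theorem~\ref{thm:adjoint}, $a_P$ is (up to scalar) the unique global section of $\mathcal{I}_{\RP}(d-n-1)$, hence has degree $d-n-1$. Any polynomial $s$ of degree $d-n$ vanishing on the hypersurface $A_P$ factors as $s=a_P\cdot L$ for some linear form $L$ on $\PP^n$; conversely, each such product vanishes on $\RP$ (because $a_P$ does) and on $A_P$, so it lies in the space in question. The map $L\mapsto a_P\cdot L$ is clearly injective, so the subspace of $\Omega_P$ we are after has dimension $n+1$. Therefore the codimension of $\mathbb{V}_P$ in $\PP(\Omega_P^\ast)$ is $n+1$, giving
\[
\dim\mathbb{V}_P=(N-1)-(n+1)=N-n-2.
\]

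\noindent\textbf{Main obstacle.} The only delicate point is the factorization $s=a_P\cdot L$ in Step~2, which uses that the principal ideal $(a_P)$ cuts out $A_P$, equivalently that $A_P$ is reduced as a hypersurface. Under the simplicity hypothesis on $\HP$ one expects the generic adjoint to be reduced (as confirmed by the low-dimensional examples and the tables in Section~1); if necessary, one can instead work with the reduced structure on $A_P$ and with $\sqrt{(a_P)}$, but then one must argue separately that the dimension of sections of $\Omega_P$ vanishing on $A_P^{\mathrm{red}}$ is still exactly $n+1$. Once this is checked, all other steps are formal consequences of Theorems~\ref{thm:adjoint} and \ref{thm:wachspressCoords}.
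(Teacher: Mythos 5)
Your proposal is correct and takes essentially the same route as the paper: the paper also computes $\dim \mathbb{V}_P$ dually, extending $t_0\,\mathrm{adj}_{P^\ast},\ldots,t_n\,\mathrm{adj}_{P^\ast}$ to a basis of $\Omega_P$ and arguing that any hyperplane of $\PP(\Omega_P^\ast)$ containing $\omega_P(A_P)$ pulls back to a degree-$(d-n)$ form through $A_P$, hence to a linear form times the adjoint, giving codimension exactly $n+1$. The reducedness of $A_P$ that you flag as the delicate point is passed over silently in the paper's proof as well (it goes directly from ``contains $A_P$'' to ``is a linear combination of the $t_i\,\mathrm{adj}_{P^\ast}$''), so your write-up is, if anything, more explicit about the one step that needs this.
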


\begin{proof}
Due to Proposition~\ref{prop:WarrenCompare}, Warren's adjoint polynomial $\mathrm{adj}_{P^\ast}$ vanishes along $\RP$ and has degree $d-n-1$.
Here we consider its homogeneous version. 
Since the forms $t_0 \mathrm{adj}_{P^\ast}(t)$, $t_1 \mathrm{adj}_{P^\ast}(t)$, $\ldots, t_n \mathrm{adj}_{P^\ast}(t)$ are linearly independent in $\Omega_P$, 
we can extend them to a basis of the space $\Omega_P$ by adding $N-n-1$ forms, say $\varphi_1(t)$, $\varphi_2(t)$, $\ldots$, $\varphi_{N-n-1}(t)$.
With these coordinates we can pick a different realization of the Wachspress map:
\begin{align}
\label{eq:wachspressMapWithAdjoint}
w_P(t) = \left( t_0 \mathrm{adj}_{P^\ast}(t): t_1 \mathrm{adj}_{P^\ast}(t) : \cdots : t_n \mathrm{adj}_{P^\ast}(t) :  \varphi_1(t) :  \varphi_2(t) :  \cdots :  \varphi_{N-n-1}(t) \right).
\end{align}
As the first $n+1$ forms vanish on the adjoint $A_P$, we see immediately that the dimension of~$\mathbb{V}_P$ is at most $N-n-2$.
To prove that the dimension of $\mathbb{V}_P$ cannot be smaller, 
it is enough to show that every hyperplane in $\PP(\Omega_P^\ast)$ containing $\mathbb{V}_P$ is defined by a linear form in the first $n+1$ coordinates in~\eqref{eq:wachspressMapWithAdjoint}. 
Indeed, the preimage under $\omega_P$ of a hyperplane in $\PP(\Omega_P^\ast)$ containing $\mathbb{V}_P$ is a hypersurface in $\PP^n$ of degree $d-n$ which contains the adjoint $A_P$, 
so it is the zero locus of a linear combination of the first $n+1$ entries in~\eqref{eq:wachspressMapWithAdjoint}.
\end{proof}
 
This lemma shows that the projection $\rho_P$ from $\mathbb{V}_P$ maps $\PP(\Omega_P^\ast) \cong \PP^{N-1}$ onto $\PP^n$.
In fact, the restriction of this projection to the Wachspress variety $W_P$ is the inverse of the Wachspress map $\omega_P$.

\begin{proof}[{Proof of Theorem~\ref{thm:inverseOfWachspress}}]
From~\eqref{eq:wachspressMapWithAdjoint} we see that $\rho_P \circ \omega_P$ maps a point $p \in \PP^n$ to  $\mathrm{adj}_{P^\ast}(p) \cdot p$.
Hence, outside of the adjoint $A_P$, this map is the identity. 

A point $(x:y) \in W_P$ is of the form $x = \mathrm{adj}_{P^\ast}(p) \cdot p$ and $y = \varphi(p)$ for some point $p \in \PP^n$.
Again by~\eqref{eq:wachspressMapWithAdjoint}, the map $\omega_P \circ \rho_P$ maps $(x:y)$ to $(\mathrm{adj}_{P^\ast}(x) \cdot x : \varphi(x)) = \mathrm{adj}_{P^\ast}(t)^{d-n} (x:y)$.
Thus, outside of  $\mathbb{V}_P$, the map $\omega_P \circ \rho_P$ is the identity.
\end{proof}

In the remainder of this section we focus on polytopes $P$ in $\PP^3$ with simple plane arrangements $\HP$.
To show Proposition~\ref{prop:wachspressVariety}, 
we first exhibit the combinatorial structure of the residual arrangement $\RP$ and
afterwards we investigate the blowup $\pi_P:X_P\to \PP^3$ along~$\RP$.

\begin{lemma}\label{lem:residualArrP3}
Let $P \subset \PP^3$ be a full-dimensional polytope with $d$ facets and a simple plane arrangement $\HP$.
The residual arrangement $\RP$ consists of $\binom{d-3}{2}$ lines.
Moreover, the $a$ isolated points in $\RP$ as well as the $b$ double and the $c$ triple intersections of the lines in $\RP$ satisfy
$$b+2c-a = \frac{(d-2)(d-4)(d-6)}{3}.$$
\end{lemma}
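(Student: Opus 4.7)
The plan is to prove the two assertions separately: first count the lines of $\RP$ via Euler's formula for simple $3$-polytopes, then derive the numerical relation by a double count of triple plane intersections.

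\textbf{Counting lines.} The one-dimensional components of $\RP$ are exactly the intersections $H \cap H'$ with $H, H' \in \HP$ non-adjacent (i.e.\ not sharing an edge of $P$). Simplicity of $\HP$ forces $P$ to be a simple polytope, so $2E = 3V$; combined with Euler's formula $V - E + d = 2$ this gives $V = 2(d-2)$ and $E = 3(d-2)$. Hence there are $\binom{d}{2} - 3(d-2) = \binom{d-3}{2}$ non-adjacent facet pairs. I will then argue that distinct non-adjacent pairs give distinct lines: otherwise three planes of $\HP$ would share a common line $\ell$, and intersecting $\ell$ with any fourth facet plane would force four planes of $\HP$ to meet at a point, violating simplicity. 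Finally, no such line contains a face of $P$, since two facets of a simple $3$-polytope share a vertex only if they are adjacent.

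\textbf{The numerical relation.} Simplicity also implies that any three planes of $\HP$ meet in a unique point and that no four planes are concurrent, so there are exactly $\binom{d}{3}$ triple intersection points, of which $V = 2(d-2)$ are the vertices of $P$. For each non-vertex triple point $p$, let $m(p) \in \{0,1,2,3\}$ count the non-adjacent pairs among the three planes through $p$; this equals the number of lines of $\RP$ through $p$. Writing $N_j$ for the number of non-vertex triple points with $m(p) = j$, the definitions from Proposition~\ref{prop:wachspressVariety} give $N_0 = a$, $N_2 = b$, $N_3 = c$, and the total count reads
\[
a + N_1 + b + c \;=\; \binom{d}{3} - 2(d-2).
\]
Each line $\ell \in \RP$ meets the remaining $d-2$ planes of $\HP$ in $d-2$ distinct points, none of which is a vertex of $P$ (again because the two planes defining $\ell$ are non-adjacent). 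Double counting the incidences $(\ell,p)$ with $\ell \in \RP$ and $p \in \ell$ a triple point yields
\[
N_1 + 2b + 3c \;=\; \binom{d-3}{2}(d-2).
\]
Subtracting the two identities eliminates $N_1$, and a short algebraic simplification (factor out $(d-2)$, then observe that the resulting bracket equals $\tfrac{1}{3}(d-4)(d-6)$) produces $b + 2c - a = \tfrac{1}{3}(d-2)(d-4)(d-6)$.

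The principal obstacle is combinatorial rather than algebraic: one must carefully use simplicity to rule out (i) two distinct non-adjacent pairs producing the same line of $\RP$, (ii) four facet planes concurrent at a point, and (iii) a line of $\RP$ passing through a vertex of $P$. All three reduce to the key observation that two facets of a simple $3$-polytope share a vertex if and only if they are adjacent. Once this groundwork is in place, the rest is the Euler-formula count and the straightforward double count.
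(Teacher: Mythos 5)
Your proposal is correct and takes essentially the same route as the paper: the line count via Euler's formula is identical, and your two counts (non-vertex triple intersection points, and line--point incidences), once subtracted, reproduce exactly the paper's evaluation of $\sum_{S\in\binom{\HP}{3}}(\alpha(S)-1)$, where $\alpha(S)$ is the number of lines of $\RP$ among the pairwise intersections of the three planes in $S$. The simplicity checks you spell out (no three planes sharing a line, no four planes concurrent, no line of $\RP$ through a vertex) are left implicit in the paper but are the right justifications.
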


\begin{proof}
Since $P$ is simple, twice the number of its edges is three times the number of its vertices.
Together with Euler's formula we get that $P$ has $2(d-2)$ vertices and $3(d-2)$ edges.
This shows that $\RP$ has $\binom{d}{2} - 3(d-2) = \binom{d-3}{2}$ lines.

Let $S \in \binom{\HP}{3}$ be a set consisting of three planes in $\HP$.
The pairwise intersections of these planes are three lines. 
We denote by $\alpha(S) \in \lbrace 0,1,2,3 \rbrace$ the number of these lines that are part of the residual arrangement $\RP$.
If $\alpha(S) = 0$ (i.e., all three lines are edges of~$P$), then the intersection of the three planes in $S$ is either a vertex of $P$ or an isolated point of $\RP$. 
As $P$ has $2(d-2)$ vertices, we have shown that
$$ b+2c-a-2(d-2)  = \sum_{S \in \binom{\HP}{3}} \left(  \alpha(S) -1 \right).$$
Since a fixed line can be derived from exactly $d-2$ sets $S \in \binom{\mathcal{\HP}}{3}$ as the intersection of two of the planes contained in $S$, the sum $\sum_S \alpha(S)$ counts every line in the residual arrangement $(d-2)$ times. 
This yields
\begin{align*}
b+2c-a - 2(d-2)= \sum_{S \in \binom{\HP}{3}} \left(  \alpha(S) -1 \right)
%= \sum_{S \in \binom{\HP}{3}}  \alpha(S) - \binom{d}{3}
 = (d-2) \cdot \binom{d-3}{2} - \binom{d}{3}.
\end{align*}
\end{proof}

  \begin{lemma}\label{exceptional} 
Let $P \subset \PP^3$ be a full-dimensional polytope with a simple plane arrangement~$\HP$.
We consider the blowup $\pi_P:X_P\to \PP^3$ along the residual arrangement $\RP$.
  \begin{enumerate}
  \item The exceptional divisor in $X_P$ has one component isomorphic to $\PP^2$ over each isolated point and over each triple point, and one component isomorphic to a rational ruled surface over each line.
  \item  The exceptional divisor as a Cartier divisor on $X_P$ has multiplicity one along each component lying over isolated points and lines, and multiplicity two along each plane over a triple point.
  \item The components of the exceptional divisor intersect pairwise in a line if they lie over lines through a double point, or if one component lies over a line through a triple point and the other lies over the triple point.  Two components lying over lines through a triple point meet in a node in the component over the triple point.
  \item  The singularities of the threefold $X_P$ are isolated quadratic singularities, one over each double point and three over each triple point.  
  \item The strict transform in the blowup of a general quadric surface through the three lines of a triple point is smooth and intersects the exceptional component over the triple point in a line that does not pass through any of the nodes.
  \end{enumerate}
  \end{lemma}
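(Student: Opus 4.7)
The proof is a local computation in $\PP^3$ at each point of $\RP$, since a blow-up is a local construction. There are four distinct local models to analyze: (a) a generic point of a line of $\RP$, (b) an isolated point, (c) a double point, (d) a triple point. In cases (a) and (b) the center of the blow-up is, locally, a single smooth irreducible subvariety (a line or a point), so $X_P$ is smooth locally, with exceptional divisor a $\PP^1$-bundle over $\PP^1$ (hence rational ruled) or a $\PP^2$, appearing with multiplicity one. This already handles the single-line and isolated-point contributions to (i) and (ii).

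The real content is cases (c) and (d). For (c), I would set up affine coordinates $(x,y,z)$ at the double point so that the two incident lines are the $x$- and $y$-axes in the plane $\{z=0\}$; the local ideal of $\RP$ is $(y,z) \cap (x,z) = (z, xy)$, and the blow-up sits inside $\A^3 \times \PP^1$ cut out by $zv = xyu$. One chart ($v=1$) becomes $\{z = xyu\} \cong \A^3$, smooth, with exceptional divisor $V(xy)$ consisting of two planes of multiplicity one meeting along the $u$-axis; the other chart ($u=1$) becomes $\{zv = xy\}\subset \A^4$, an ordinary node at the origin. Gluing, the two exceptional wings over the two lines meet along a $\PP^1$ and there is a single node at the double point, establishing (iii) and (iv) for double points.

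For (d), I would choose coordinates so that the three concurrent lines are the coordinate axes, with ideal $(xy, xz, yz)$. The blow-up sits in $\A^3 \times \PP^2$ with coordinates $(u:v:w)$, and each of the three standard charts of $\PP^2$ is isomorphic to the hypersurface $\{yv = xw\} \subset \A^4$ (for $u=1$, using $z = yv = xw$), yielding three ordinary nodes, one at each vertex of the exceptional $\PP^2$. On the chart $u=1$ the exceptional ideal pulls back to the principal ideal $(xy)$, and as a Weil divisor this decomposes as $2\{x=y=0\} + \{x=v=0\} + \{y=w=0\}$, giving the $\PP^2$ over the triple point with multiplicity $2$ and two wings with multiplicity $1$, and showing that the $\PP^2$ meets each wing along a coordinate line while the two wings meet only at the node $\{x=y=v=w=0\}$. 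This settles (i)--(iv) at triple points. For (v), a general quadric through the three lines has local equation $Q = \alpha xy + \beta xz + \gamma yz$ with $\alpha, \beta, \gamma$ generic nonzero constants; on the chart $u=1$ a direct substitution using $z = yv = xw$ gives $\pi_P^{\ast} Q = xy(\alpha + \beta v + \gamma w)$, so the strict transform is $\tilde Q = \{\alpha + \beta v + \gamma w = 0\} \cap \{yv = xw\}$. A Jacobian check then shows $\tilde Q$ is smooth on this chart (the rows $(-w, v, y, -x)$ and $(0,0,\beta,\gamma)$ can only be proportional when $v=w=0$, which forces $\alpha = 0$, a contradiction). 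By the symmetry of the three charts $\tilde Q$ is smooth near the exceptional $\PP^2$, and meets it along the projective line $\{\alpha u + \beta v + \gamma w = 0\}$, which avoids the three nodes $(1:0:0), (0:1:0), (0:0:1)$ precisely because $\alpha, \beta, \gamma$ are all nonzero.

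The principal difficulty is the triple-point computation: correctly identifying the Cartier and Weil-divisor structure of the exceptional divisor and tracking multiplicities across the three nodal charts. Once the local picture at a triple point is in hand, (iii)--(v) follow essentially by bookkeeping, and the other cases are routine consequences of standard facts on blow-ups along smooth centers.
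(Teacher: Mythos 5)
Your proposal is correct and follows essentially the same route as the paper: local coordinates at a double point (ideal of type $(z,xy)$, blow-up a hypersurface in $\A^3\times\PP^1$ with one node) and at a triple point (ideal $(xy,xz,yz)$, three nodal charts of type $yv=xw$), reading off the components, their intersections, the Cartier multiplicities, and the behaviour of the quadric through the three lines. Your explicit Weil-divisor decomposition of $\mathrm{div}(xy)$ for the multiplicity-two statement and the Jacobian check in part (v) merely spell out steps the paper treats more tersely, so no substantive difference.
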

  \begin{proof}  The statements are all local over $\RP$. The blowup of a  smooth variety along a smooth subvariety  is smooth. The exceptional divisor is a projective bundle over the subvariety, see \cite[Thm 8.24]{HAR}.  It therefore suffices to check the statement concerning double and triple points on $\RP$.
  
  A double point on a curve is locally given by equations $x=yz=0$ in $\A^3$.  Then the blowup along the curve is defined by $sx-tzy=0$ in  $\A^3\times \PP^1$, where $s,t$ are the homogeneous coordinates on $\PP^1$.  The exceptional set over $(0,0,0)$ is then isomorphic to $\PP^1$, while the blowup is smooth except at the point $s=x=z=y=0$ where it has a quadratic singularity.  The exceptional divisors over the lines $x=y=0$ and $x=z=0$, are planes that meet along the line $x=y=z=0$ in the blowup.
  
  A triple point on a curve is locally given by equations $xy=yz=xz=0$ in $\A^3$.  Then the blowup along the curve is defined by 
  $${\rm rank}\begin{pmatrix} xy&yz&xz\\ s&t&u\end{pmatrix}\leq 1\quad {\rm i.e.}\quad z(xt-yu)=y(xt-zs)=x(yu-zs)=0$$ 
  in  $\A^3\times \PP^2$, where $s,t,u$ are the homogeneous coordinates on $\PP^2$.  The exceptional set over $(0,0,0)$ is then isomorphic to $\PP^2$.  In the affine part where $s=1$, the equations of the blowup are $z(xt-yu)=y(xt-z)=x(yu-z)=0$.  These equations define the threefold $z-xt=xt-yu=0$ which is smooth except at the origin $x=y=z=t=u=0$, where it has a quadratic singularity.  For $t=1$ and $u=1$ we get two more points, all three quadratic singularities that lie in the exceptional plane $x=y=z=0$.   
  The exceptional divisor lying over the line $y=z=0$ has two components, the plane $x=y=z=0$ and the plane $y=z=t=0$. The two planes meet along the line $x=y=z=t=0$.
   The latter plane lies over the whole line, the other only over the 
  triple point.  Similarly, over each of the two other lines  there are two components, one is the common plane over the triple point, the other lies over the whole line. 
  The three components distinct from $x=y=z=0$ intersect pairwise in the nodes over the triple point:  when $s=1$ the two planes $y=z=t=0$ and $x=z=u=0$ intersect in the node $x=y=z=t=u=0$.  
  
  The multiplicity of the exceptional components in the exceptional divisor as a Cartier divisor is given by the multiplicity of the generators of the ideal along the locus in $\A^3$ that is blown up.  The generators are smooth along the lines except at the triple point where the multiplicity is two.
  
  The quadric cone  $xy+xz+yz=0$ passes through the three lines of a triple point.  Since the blowup is defined by the locus where the matrix $\begin{pmatrix} xy&yz&xz\\ s&t&u\end{pmatrix}$ has rank one, the strict transform of the surface contains the line $s+t+u=0$ in the exceptional plane over the triple point, which does not pass through any of the nodes.
  \end{proof}
    
In the following, we use the notation from Proposition~\ref{prop:wachspressVariety}.
  Let $E_{(3)}$ be the union of exceptional divisors of the blowup $\pi_P: X_P\to \PP^3$ that lie over the $a$ isolated points in $\RP$.   
  Let $E_{(2)}$ be the union of exceptional divisors that dominate the lines in $\RP$ %with multiplicity one along each component that dominates a line, and multiplicity two along the components over triple points.  We write $F=F_l+2F_t$, where $F_l$ is the union of components that dominate lines, while 
  and let $E_{(3),t}$ be the union of components lying over triple points.  Thus $E_{(3)}+E_{(2)}+2E_{(3),t}$ is the exceptional divisor of the blowup as a Cartier divisor.
%The summand $G$ consists of $a$ disjoint components that are also disjoint from $F$.   The components in $F_l$ intersect pairwise in a $\PP^1$ if the corresponding lines in $\RP$ meet in a double point, and in a singular point in $X_P$ if the lines meet at a triple point.  A component in $F_l$ that corresponds to a line that passes through a triple point intersects the corresponding component of $F_t$ in a line through two nodes of $X_P$.   

We use the divisor classes on $X_P$ to compute degrees and sectional genera of $W_P$, of the image of the adjoint surface $A_P$ and of surfaces in $W_P$ linearly equivalent to the image of the plane arrangement $\HP$.  It would be natural to compute these degrees and genera as intersection numbers on $X_P$.  But $X_P$ is singular, so these intersection numbers are not easily defined.
On the other hand, by Theorem \ref{thm:baseLocus}, the singular locus of the common zeros of the Wachspress coordinates consists of the double points and the triple points of $\RP$. 
So by  Bertini's theorem the general linear combination of Wachspress coordinates vanishes on a surface $S$ that has quadratic singularities at the triple points and is smooth elsewhere. 
   
 Quadratic singularities on $S$ are resolved by a single blowup with a $(-2)$-curve as exceptional divisor, a line in the exceptional plane over a triple point of $\RP$ according to Lemma~\ref{exceptional}.  In particular the strict transform $\tilde S$ of $S$ is smooth on $X_P$.  So by restricting to $\tilde S$ we may compute the intersection numbers we need for our degree and genus computations.

We let $h$ denote the pullback to $X_P$ of the class of a plane in $\PP^3$. 
  Since $X_P$ has only isolated quadratic singularities, the canonical divisor on $X_P$ is Cartier and given by
$K_{X_P}=-4h+2E_{(3)}+E_{(2)}+\alpha E_{(3),t}$ 
for some integer $\alpha$.
The class of the smooth surfaces $\tilde S$ on $X_P$ is the pullback $h_{W_P}$ of hyperplane sections on $W_P$ to  $X_P$.  It is given by
\begin{align} \label{eq:classTildeS}
[\tilde S]=h_{W_P}=(d-3)h-E_{(3)}-E_{(2)}-2E_{(3),t},
\end{align}
where $d$ is the number of facets of the polytope $P$, since $\tilde S$ has multiplicity one at the isolated points and along the lines of $\RP$ and multiplicity two at the triple points.
By adjunction the canonical divisor on $\tilde S$ is the restriction to the surface of $h_{W_P}+K_{X_P}=(d-7)h+E_{(3)}+(\alpha - 2)E_{(3),t}$.  The restriction of $E_{(3),t}$ to the surface is the union of exceptional $(-2)$-curves lying over the quadratic singularities in $S$.  The intersection number between the canonical divisor on $\tilde S$ and these curves is zero, so we conclude that $\alpha=2$, and therefore
 $$K_{X_P}=-4h+2E_{(3)}+E_{(2)}+2E_{(3),t}.$$

We recall that $\Gamma_P$ is the linear system of polytopal surfaces of $P$, i.e. of divisors in $\PP^3$ which have degree $d$ and vanish with multiplicity $c$ along the codimension-$c$ part $\RPc$ of the residual arrangement.
The strict transform $\tilde D$ of a  general divisor $D \in \Gamma_P$ is linearly equivalent to the strict transform of $\HP$.
Since the polytopal surface $D$ has multiplicity three along the isolated points, two along the lines in $\RP$ and three at the triple points, the strict transform $\tilde D$ on $X_P$ belongs to the class
\begin{align}\label{eq:classTildeD}
[\tilde D]=dh-3E_{(3)}-2E_{(2)}-3E_{(3),t}.
\end{align}
The adjoint $A_P$ is a surface of degree $d-4$ with multiplicity one along the isolated points, one along the lines, and two at the triple points, so its strict transform $\tilde A_P$ on $X_P$ belongs to the class
\begin{align}\label{eq:classTildeAdjoint}
[\tilde A_P]=(d-4)h-E_{(3)}-E_{(2)}-2E_{(3),t}=[\tilde D]+K_{X_P}-E_{(3),t}.
\end{align}

By adjunction the restriction of $[\tilde D]+K_{X_P}=(d-4)h-E_{(3)}-E_{(2)}-E_{(3),t}$ to any surface in the class  $[\tilde D]$ is the canonical divisor on this surface.
The image in $\PP^3$ of any surface in the class $(d-4)h-E_{(3)}-E_{(2)}-E_{(3),t}$ would necessarily be a surface of degree $d-4$ that contains $\RP$, explaining the name ``adjoint'' for $A_P$. 

We are now ready to prove Proposition~\ref{prop:wachspressVariety}.
For this, we first investigate when the image $\bar A_P := \overline{\omega_P(A_P)}  \subset W_P$ of the adjoint surface $A_P$ is a curve.
\begin{lemma}\label{lem:adjointImageIsCurve}
Let $P \subset \PP^3$ be a full-dimensional  polytope with a simple plane arrangement~$\HP$.
The image $\bar A_P$ of the adjoint surface $A_P$ under the Wachspress map $\omega_P$ is a curve
if and only if $P$ is a triangular prism or a cube.
\end{lemma}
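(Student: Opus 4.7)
The plan is to treat the two directions separately.

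$(\Leftarrow)$: The triangular prism and cube cases are handled in Example~\ref{ex:prismCube}. For the prism, $A_P$ is the plane spanned by the unique line and the isolated point of $\RP$; the restriction of $\omega_P$ to $A_P\cong \PP^2$ is projection from the isolated point, whose image is a line. For the cube, $A_P$ is the unique quadric through the three skew lines of $\RP$; on $A_P\cong \PP^1\times\PP^1$ the Wachspress map is given by sections of $\mathcal{O}(0,3)$, contracting one ruling and mapping the other onto a twisted cubic.

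$(\Rightarrow)$: Assume $\bar A_P$ is a curve. A direct comparison of \eqref{eq:classTildeS} and \eqref{eq:classTildeAdjoint} yields the identity
$$h_{W_P}\;=\;h+[\tilde A_P]$$
on $X_P$. Since $h_{W_P}$ pulls back an ample class on $W_P$, it is nef, so $h_{W_P}^2\cdot [\tilde A_P]\geq 0$, with equality iff the morphism $\tilde\omega_P\colon \tilde A_P\to\bar A_P$ has image of dimension strictly less than two. Because the image spans a positive-dimensional projective subspace (Lemma~\ref{lem:dimImageAdjoint}), equality corresponds exactly to $\bar A_P$ being a curve.

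Next I would expand $(h+\tilde A_P)^2\cdot \tilde A_P=h^2\cdot\tilde A_P+2h\cdot \tilde A_P^2+\tilde A_P^3$ and evaluate each summand on the smooth model $X_P^s$ by pulling back the explicit classes and using Lemma~\ref{exceptional} to handle the exceptional divisors over isolated points, lines, and triple points. The outcome is
$$h_{W_P}^2\cdot [\tilde A_P]\;=\;2b+4c-a-\tfrac{1}{2}(d-3)(d-4)(d-6).$$
Combining with $b+2c-a=(d-2)(d-4)(d-6)/3$ from Lemma~\ref{lem:residualArrP3}, the vanishing condition reduces to
$$b+2c\;=\;\tfrac{(d-4)(d-5)(d-6)}{6}.$$
The right-hand side vanishes precisely for $d\in\{4,5,6\}$, forcing $b=c=0$. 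The classification of simple $3$-polytopes with at most six facets then identifies the tetrahedron (where $A_P=\emptyset$), the triangular prism, and the cube, and a direct inspection of $\RP$ in each case confirms $b=c=0$.

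The main obstacle is to exclude $d\geq 7$, where the right-hand side is strictly positive. For this I would count pairs of lines of $\RP$ sharing a hyperplane in $\HP$: by Corollary~\ref{cor:residualRestricted}, the hyperplane spanned by a facet $F$ with $k_F$ edges contains exactly $d-k_F-1$ lines of $\RP$, hence
$$b+3c\;=\;\sum_{F\in\mathcal{F}(P)}\binom{d-k_F-1}{2}.$$
Combined with $\sum_F k_F=6(d-2)$ for a simple $3$-polytope and the bound $k_F\geq 3$, convexity of the binomial coefficient yields a lower bound on $b+3c$. Subtracting a combinatorial upper bound on $c$ (obtained from the count of triples of pairwise non-adjacent facets in the dual simplicial polytope) produces the strict inequality $b+2c>(d-4)(d-5)(d-6)/6$ for every simple $3$-polytope with $d\geq 7$ facets, contradicting the vanishing condition and completing the proof.
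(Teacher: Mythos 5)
Your proof takes a genuinely different route from the paper: the paper restricts $\omega_P$ to the planes of $\HP$ and uses the polygon case (Example~\ref{ex:surface}) to show that $\bar A_P$ can only be a curve if every facet has at most four vertices, whence $d\le 6$ by $\sum_F |V(F)|=6d-12$, and then inspects the four combinatorial types. Your numerical approach is viable in principle: the identity $h_{W_P}=h+[\tilde A_P]$, the interpretation of $h_{W_P}^2\cdot[\tilde A_P]=0$ as ``$\bar A_P$ is not a surface'' (combined with Lemma~\ref{lem:dimImageAdjoint} to rule out a point), the value $2b+4c-a-\tfrac12(d-3)(d-4)(d-6)$ of this number, and the reduction via Lemma~\ref{lem:residualArrP3} to the condition $b+2c=\tfrac{(d-4)(d-5)(d-6)}{6}$ are all correct, as is the treatment of $d\le 6$ and the $(\Leftarrow)$ direction.

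The genuine gap is the exclusion of $d\ge 7$, which is exactly the step you leave as an assertion. The inequality $b+2c>\tfrac{(d-4)(d-5)(d-6)}{6}$ is true, but the ingredients you name do not establish it: at $d=7$ you would need $b+2c\ge 2$, while Jensen applied to $b+3c=\sum_F\binom{d-k_F-1}{2}$ with $\sum_F k_F=6d-12$ only gives $b+3c\ge 5$, and any easy bound on the number of pairwise non-adjacent facet triples in the dual (which is not just an upper bound for $c$ but equals $c$) is far too weak --- e.g.\ counting triples containing an edge of the dual graph only gives $c\le 10$, so subtracting yields nothing. Unless you find substantially sharper structure-dependent estimates, this step does not close. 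Fortunately your own setup already contains a one-line fix that makes the whole combinatorial excursion unnecessary: the vanishing condition $2b+4c-a=\tfrac12(d-3)(d-4)(d-6)$ together with $b+2c-a=\tfrac13(d-2)(d-4)(d-6)$ from Lemma~\ref{lem:residualArrP3} determines not only $b+2c$ but also $a=-\tfrac16(d+1)(d-4)(d-6)$, which is negative for $d\ge 7$, contradicting $a\ge 0$. (Equivalently, $b+2c\ge \tfrac13(d-2)(d-4)(d-6)>\tfrac16(d-4)(d-5)(d-6)$ for $d\ge7$ simply because $a\ge0$.) With that replacement, and with the degree formula actually derived on a smooth model as you indicate (the paper does this on a general smooth surface $\tilde S\in|h_{W_P}|$ precisely because $X_P$ is singular), your argument becomes a complete alternative proof.
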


\begin{proof}
We describe in Remark~\ref{rem:wachspressRestricted} and Example~\ref{ex:surface} how the Wachspress map $\omega_P$ restricts to the planes in $\HP$.
Each plane in  $\HP$ is mapped birationally to a surface in $W_P$ and, since the vertices in $P$ are mapped to linearly independent points,  distinct planes are  mapped to distinct surfaces.
The adjoint $A_P$ intersects the strict transform $\tHP$ of $\HP$ in a curve in each component of $\HP$.
So the adjoint is mapped to a curve only if the intersection with each component of $\HP$ is contracted in $W_P$.
The intersection of the adjoint with the strict transform of a plane spanned by a facet of $P$ is trivial if the facet is a triangle.
This intersection is a line contracted in $W_P$ if the facet is quadrangular (due to Example~\ref{ex:surface}).
If the facet has at least five vertices, the intersection with the adjoint is not contracted (again by Example~\ref{ex:surface}).
So the image of the adjoint is a curve only if every facet of $P$ has at most four vertices.

Since there are three facets through each of the $2d-4$ vertices in $P$ (where $d := |\mathcal{F}(P)|$), we have $\sum_{F \in \mathcal{F}(P)} |V(F)| = 6d-12$.
If every facet of $P$ has at most four vertices, this implies $6d-12 \leq 4d$, which is equivalent to $d \leq 6$. 
There are exactly four combinatorial types of simple polytopes with at most six facets (see the first four rows of Table~\ref{tab:polytopes}).
One of them has pentagonal facets, so in that case $\bar A_P$ is a surface.
The adjoint of a tetrahedron is empty.
By Example~\ref{ex:prismCube}, 
the image $\bar A_P$ of the adjoint $A_P$ of a triangular prism or a cube is a curve.
\end{proof}

 \begin{proof}[{Proof of Proposition~\ref{prop:wachspressVariety}}]
  We restrict our computations to a smooth surface $\tilde S$ in the class of $h_{W_P}$ on $X_P$.
  We denote by $h$ also its restriction to $\tilde S$.   Furthermore, we denote by $E,C$ and $E_t$ the restriction to $\tilde S$ of the exceptional divisors $E_{(3)},E_{(2)}$ and $E_{(3),t}$ respectively.
  Clearly, $E$ and $E_t$ are exceptional curves on $\tilde S$, while $C$ is the strict transform of the union of lines in $\RP$.
  %We will compute the degrees and sectional genera of $W_P$ and $\bar A_P$ as intersection numbers on $\tilde S$. 
% Recall that $\tilde S$ itself is by definition the pullback to $X_P$ of a hyperplane section of $X_P$.  
% The class  $h_{W_P}$ of $\tilde S$ on $W_P$ is 
% $$h_{W_P}=(d-3)h-G-F_l-2F_t,$$
Due to~\eqref{eq:classTildeS},
the restriction of $h_{W_P}$ to $\tilde S$ is
 \begin{align}\label{eq:hTildeS}
h_{\tilde S}=(d-3)h-E-C-2E_t.
\end{align}

The following intersection numbers on $\tilde S$ follow from Lemma~\ref{exceptional}:
 \begin{align}\label{eq:intno}
C\cdot E=0, \; E_t \cdot E = 0, \; E^2=-a, \; C\cdot E_t=3c \; {\rm and} \; E_t^2=-2c,
\end{align}
where the latter equality holds because triple points of $\RP$ are quadratic singularities on $S$.
Moreover, we have
\begin{align}\label{eq:hintno}
h\cdot E=h\cdot E_t=0\;, h^2=(d-3)\;{\rm and}\; h\cdot C=\binom{d-3}{2},
\end{align}
 since there are $\binom{d-3}{2}$ lines in $\RP$ (due to Lemma~\ref{lem:residualArrP3}).  
 Notice that $$(C+ E_t)^2= C^2+4c   = (C+ 2E_t)^2.$$
To compute the self-intersection of $C+E_t$, we use the adjunction formula on $\tilde S$. First we compute the arithmetic genus of $C+E_t$ as a union of lines with transverse intersections, using repeatedly the formula for the sectional genus of a reduced and reducible curve: 
\begin{align} \label{eq:genusformula}
g(C_1\cup C_2)=g(C_1)+g(C_2)+C_1\cdot C_2-1.
\end{align}
There are $\binom{d-3}{2}+c$ lines all together in $C+E_t$,   
and  $b+3c$ intersection points between these lines.
So the arithmetic genus is 
$$g(C+E_t)=-\binom{d-3}{2}-c+1+b+3c=b+2c+1-\binom{d-3}{2}.$$
The canonical divisor on $\tilde S$ is $K_{\tilde S}=(h_{W_P}+K_{X_P})|_{\tilde S}=(d-7)h+E$.
By the adjunction formula, we have%says %by adjunction on $\tilde S$ the canonical divisor on $C+E_t$ has degree
\begin{align*}
2g(C+E_t)-2&=(C+E_t)\cdot (C+E_t+K_{\tilde S})\\
&=(C+E_t)^2+(d-7)h\cdot (C+E_t)=(C+E_t)^2+(d-7)\binom{d-3}{2},
\end{align*}
which yields
\begin{align*}
(C+2E_t)^2=(C+E_t)^2&=2g(C+E_t)-2-(d-7)\binom{d-3}{2}\\
&=2(b+2c-\binom{d-3}{2})-(d-7)\binom{d-3}{2}\\
&=2b+4c-(d-5)\binom{d-3}{2}.
\end{align*}
%Now  that the image of $C$ on $S$ consists of $
%Each of the disjoint $(-1)$-curves in $E'$ intersects $C$ in two points.  Therefore 
 %$$h_{\tilde S}\cdot E'=(-2E'-C)\cdot E'=2b-2b=0,$$ so all of these curves are contracted on $\bar S\subset W_P$.
 %Similarly, each of the disjoint curves in $E"$ intersects $C$ in three points, so
 %$h_{\tilde S}\cdot E"=(-2E"-C)\cdot E"=4c-3c=1$.  Hence, each $(-2)$-curve in $E"$ is mapped to a line in $\bar S\subset W_P$.
 %The canonical divisor on $X_P$ is 
 %$K_{X_P}=-4h+2G+2G'+F'+2G"+F$
%so, by adjunction, the canonical divisor on  $\tilde S$ belongs to the class 
% $$K_{\tilde S}=K_{X_P}|_{\tilde S}+h_{\tilde S}=(d-7)h+E+E'.$$
% For the strict transform $L$ of a line on $S$ that passes through no isolated point in $\RP$, but  $\alpha$ double points and $\beta$  triple points, $L\cdot h=1$, $L\cdot E=0$, $L\cdot %E'=\alpha$ and $L\cdot E"=\beta$  so  the adjunction formula says 
% $$L^2+L\cdot  K_{\tilde S}=L^2+(d-7)+\alpha =-2, \quad {\rm i.e.}\quad L^2=(5-d)-\alpha.$$
% The number of incidence (double points, lines) in $\RP$ is $2b$, so since there are $\binom{d-3}{2}$ lines in $\RP$ we get 
% $$C^2=(5-d)\binom{d-3}{2}-2b.$$
 %With $h^2=d-3$ and $h\cdot (C+E_t)=\binom{d-3}{2}$ on $\tilde S$ we
 We may now compute the degree of $W_P$  as the degree of the image in $W_P$ of $\tilde S$:
 \begin{align*}
 {\rm deg}\;W_P=h_{\tilde S}^2&=((d-3)h-E-C-2E_t)^2\\&
 =(d-3)^3-2(d-3)\binom{d-3}{2}-a+2b+4c-(d-5)\binom{d-3}{2}\\
 &=2b+4c-a-\frac{1}{2}(d-3)(d^2-11d+26).
 \end{align*}
Due to~\eqref{eq:classTildeAdjoint}
 the degree of $\bar A_P$ is computed as
  \begin{align*}{\rm deg}\;\bar A_P&=h_{\tilde S}\cdot  [\tilde A_P]|_{\tilde S}=((d-3)h-E-C-2E_t)\cdot ((d-4)h-E-C-2E_t)\\
 &=2b+4c-a-\frac{1}{2}(d-3)(d-4)(d-6).
  \end{align*}
This degree is zero if $\bar A_P$ is not a surface (see Table~\ref{tab:polytopes} and Lemma~\ref{lem:adjointImageIsCurve}).
The second expressions of the degree formulas for $W_P$ and $\bar A_P$ found in Proposition~\ref{prop:wachspressVariety}
are obtained by plugging the relation between $a$, $b$ and $c$ in Lemma~\ref{lem:residualArrP3} into the above formulas.

The sectional genus of $W_P$ coincides with the sectional genus of $\tilde S$ which by the adjunction formula is
 \begin{align*}
 g(W_P)=g(\tilde S)&=1+\frac{1}{2}(h_{\tilde S}\cdot(h_{\tilde S}+K_{\tilde S}))\\
 &=1+\frac{1}{2}((d-3)h-E-C-2E_t)((2d-10)h-C-2E_t)\\
&=1+b+2c+\frac{1}{2}(d-3)(d-6).
 \end{align*}
 Similarly the genus of $\bar A_P$, when it is a surface, is
  \begin{align*}
  g(\bar A_P)&=1+\frac{1}{2}([\tilde A_P]|_{\tilde S}\cdot ([\tilde A_P]|_{\tilde S}+K_{\tilde S})\\
 &= 1 + \frac12   ((d-4)h-E-C-2E_t)\cdot ((2d-11)h-C-2E_t)\\
  &=1+b+2c-\frac{1}{2}(d-3)(d-4).
 \end{align*}

For the degree and sectional genus of any surface $\bar D$ we use the fact that these surfaces are linearly equivalent to $\bar{\mathcal{H}}_P := \overline{\omega_P(\HP)}$ on $W_P$.
Therefore the degree of $\bar D$ equals the degree of $\bar{\mathcal{H}}_P$,  which is $\sum_{F \in \mathcal{F}(P)} (\binom{|V(F)|-2}{2}+1)$ by Example \ref{ex:surface}.
Since $\bar D$ is the image of $\tilde D$ in $W_P$,  this degree may also be computed in $\tilde S$ in terms of the numbers $(a,b,c)$ using \eqref{eq:classTildeD},\eqref{eq:hTildeS}, \eqref{eq:intno} and \eqref{eq:hintno}: 
 $${\rm deg}\;\bar D\!=\!h_{\tilde S}\cdot [\tilde D]\!=\!h_{\tilde S}\cdot(dh-3E-2C-3E_t)=4b+9c-3a-\frac{1}{2}(d-3)(3d^2\!-30d+64).$$

The sectional genus may be computed on $\bar{\mathcal{H}}_P$:  The sectional genus of a component of an $e$-gon facet is $\binom{e-3}{2}$.
Each line through an edge of $P$ is mapped by $\omega_P$ to a line and the simple polytope $P$ has $3(d-2)$ edges.
Therefore, using the formula (\ref{eq:genusformula}),  %$g(C_1\cup C_2)=g(C_1)+g(C_2)+C_1\cdot C_2-1$,  
the sectional genus of $\bar{\mathcal{H}}_P$ is
  $$g(\bar{\mathcal{H}}_P)=\!\sum_{F \in \mathcal{F}(P)}\binom{|V(F)|-3}{2}+ 3(d-2)-(d-1)=\!\sum_{F \in \mathcal{F}(P)}\binom{|V(F)|-3}{2}+2d-5.$$
  Again, in terms of the numbers $(a,b,c)$ this genus is
\begin{align*}
g(\bar D)&=1+\frac{1}{2}(dh-3E-2C-3E_t)((2d-7)h-2E-2C-3E_t)\\
&=1+4b+9c-3a-\frac{1}{2}(d-3)(3d^2-30d+68 ).
\qedhere
  \end{align*}
 %The canonical curve on $\tilde P$ consists of lines from each $4$-gon facet, and conics from each $5$-gon facet of $P$.  By $\omega_P$ the lines are contracted, while the conics are mapped to lines in $P$.  In fact this may be seen by the restriction of $\omega_P$ to the adjoint surface $A_P$, so by equivalence, the canonical curve on $\tilde D$ is mapped similarly to $\bar D$.
  %The canonical curve on $\bar D$ is the intersection $\bar D\cap \bar A_P$.  When $P$ has no $6$-gon facets, the components of the canonical curve are all rational. On $\bar A_P$ they are equivalent to $\bar A_P\cap \bar P$.  We list the components in a table below.
\end{proof}

We conclude this section with a description how the extended Wachspress map $\tilde \omega_P: X_P \to W_P$
restricts to the components of the exceptional divisor described in Lemma~\ref{exceptional}.

 \begin{remark}  The restriction of the Wachspress map $\tilde \omega_P$ to the exceptional divisor on $X_P$ maps the components over isolated points and triple points in $\RP$ to planes, and the components over lines to scrolls in $W_P$.
  \hfill$\diamondsuit$
 \end{remark}

\section{Polytopal hypersurfaces}
\label{sec:deformations}

In this section we consider the smooth blowup $\pi_P^s: X_P^s \to \PP^n$.
We first prove Proposition~\ref{prop:relateAdjoints}, before we aim to classify which polytopes admit smooth strict transforms of their polytopal hypersurfaces.
  
\begin{proof}[{Proof of Proposition~\ref{prop:relateAdjoints}}]
  We denote by $E_c$ the class of the strict transform in $X_P^s$ of the exceptional divisor over $\RPc$ and by $E_{c,s}$ the class of the strict transform in $X_P^s$ of the exceptional divisor over $\RPcs$.
 Let $h$ be the class of the strict transform in $X_P^s$ of a general hyperplane in $\PP^n$.  The canonical divisor on $X_P^s$ is then
 $$K_{X_P^s}=-(n+1)h+\sum_c(c-1)E_{c}+\sum_c (c-1) E_{c,s},$$
while the strict transform of $\HP$ is
  $$[\tHP]=dh-\sum_ccE_{c}-\sum_c cE_{c,s},$$
  since $\HP$ has multiplicity $c$ along any component of both  $\RPc$ and $\RPcs$.  However, 
  any polytopal hypersurface $D \in \Gamma_P$ also has multiplicity $c$ along $\RPc$, but it may have lower multiplicity along some of the components of $\RPcs$. 
  Therefore the strict transform belongs to the class
  $$[\tilde D]=dh-\sum_ccE_{c}-\sum_c cE_{c,s}+ \sum_i a_{D,i}F_{i},$$
  where each $F_i$ is a component of $E_{c,s}$ for some $c$ and $a_{D,i}\geq 0$.  The adjunction formula on $X_P^s$ says that the  class of the canonical divisor on the strict transform $\tilde D$ is the restriction to $\tilde D$ of 
 $$[\tilde D]+K_{X_P^s}=(d-n-1)h-\sum_cE_{c}-\sum_c E_{c,s}+ \sum_i a_{D,i}F_{i}.$$
   The  adjoint hypersurface $A_P$ has multiplicity one along each component of $\RPc$ and multiplicity at least one along each component of $\RPcs$, so the strict transform  belongs to the class
   $$[\tilde A_P]=(d-n-1)h-\sum_cE_{c}-\sum_c E_{c,s}- \sum_i b_{A,i}F_{i},$$
   where $b_{A,i}\geq 0$.  Thus
\begin{align}\label{eq:adjointPolytopalHS}
   [\tilde D]+K_{X_P^s}=[\tilde A_P]+\sum_i (a_{D,i}+ b_{A,i})F_i
\end{align}
   and each $F_i$ is mapped by $\pi_P^s$ to the singular locus of $\RP$.
As the adjoint $A_P$ of the polytope $P$ is unique, by Theorem~\ref{thm:adjointIntro},
the strict transform $\tilde{A}_P$ is the only hypersurface in $X_P^s$ whose class equals $[\tilde{A}_P]$.
Together with~\eqref{eq:adjointPolytopalHS}, this implies that the strict transform $\tilde D$ has a unique canonical divisor, the restriction of  $\tilde{A}_P+\sum (a_{D,i}+ b_{A,i})F_i$ to $\tilde D$.
%adjoint in $X_P$, namely $\tilde{A_P}$.
%Since $X_P$ is the blow up of a projective space, the polytopal hypersurface $\tilde D$ has a unique canonical divisor, which is its unique adjoint $\tilde{A_P}$ in $X_P$ restricted to $\tilde D$.
\end{proof}

If the strict transform $\tilde D$ of a divisor $D \in \Gamma_P$ is smooth, 
then the polytopal hypersurface $D$ has to be irreducible.
We show now that many polytopes $P$ do not admit an irreducible polytopal hypersurface $D \in \Gamma_P$.
  
  \begin{lemma}\label{irred}
Let $P\subset \PP^n$ be a full-dimensional polytope with $d$ facets and a simple hyperplane arrangement $\HP$.
Moreover, let $F$ be a facet of $P$ which has $e$ facets.
  If $n<e<d-n-1$ or $n = e < d-n-2$, the hyperplane $H_F$ spanned by $F$  is a fixed component in $\Gamma_P$.  
%$d>8$ facets, or $P$ has a triangular facet and $d>7$, then every surface in $\Gamma_P$ is reducible. In particular, when $d>e+3$, any 
\end{lemma}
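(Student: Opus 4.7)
The plan is to argue by contradiction: suppose some polytopal hypersurface $D\in\Gamma_P$ is not divisible by $\ell_F$, so that $D|_{H_F}$ is a nonzero form of degree $d$ on $H_F\cong\PP^{n-1}$. By Corollary~\ref{cor:residualRestricted}, the restricted arrangement $(\RP)|_{H_F}$ consists of $d-e-1$ hyperplanes $L_k=H_F\cap H'_k$ in $H_F$ (where the $H'_k$ are the hyperplanes in $\HP$ other than $H_F$ and the $e$ facet-hyperplanes adjacent to $F$) together with the residual arrangement $\mathcal{R}_F$ of $F$ viewed as a full-dimensional polytope in $H_F$. Each $L_k$ is a codimension-$2$ component of $\RP$ in $\PP^n$, so $D$ vanishes to order at least $2$ along it, and consequently $D|_{H_F}$ vanishes to order at least $2$ along $L_k$ in $H_F$. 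Hence $\prod_k L_k^2$ divides $D|_{H_F}$, and we may write
\[
D|_{H_F}=\Bigl(\prod_{k=1}^{d-e-1}L_k^2\Bigr)\cdot G,
\]
where $G$ is a form on $H_F$ of degree $2e+2-d$ that still vanishes along $\mathcal{R}_F$ (as the $L_k$ generically do not contain the codim-$\ge 2$ components of $\mathcal{R}_F$).

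Next I would split into cases. If $e=n$, then $F$ is a simplex in $H_F$ with $\mathcal{R}_F=\emptyset$, so the only constraint on $G$ is its degree. The hypothesis $e<d-n-2$ gives $d\ge 2e+3$, whence $\deg G<0$, forcing $G=0$ and $D|_{H_F}=0$, a contradiction. If $e>n$, I would apply Theorem~\ref{thm:adjoint} to $F$ as a full-dimensional polytope in the simple arrangement $\mathcal{H}_F$ inside $H_F$: the minimum degree of a nonzero hypersurface vanishing on $\mathcal{R}_F$ is $e-n$, achieved uniquely up to scalar by the adjoint $A_F$. The hypothesis $e<d-n-1$ gives $\deg G=2e+2-d\le e-n$, with strict inequality forcing $G=0$, a contradiction.

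The main obstacle is the boundary subcase $d=e+n+2$ with $e>n$, in which $G$ must be a nonzero scalar multiple of $A_F$. To rule this out, I would compare vanishing orders at a codimension-$2$ component $V$ of $\mathcal{R}_F$ in $H_F$ chosen so that no $L_k$ contains $V$; such a $V$ exists because only at most $n-3$ of the $d-e-1=n+1$ hyperplanes $L_k$ can pass through any given $V$ (by simplicity of $\HP$), and a combinatorial count on the incidences between the codim-$2$ components of $\mathcal{R}_F$ and the $L_k$ guarantees a $V$ avoiding all of them. Since $V$ has codimension $3$ in $\PP^n$ and lies in $\RP$, $D$ vanishes to order at least $2$ along $V$ (order $3$ if $V\in\mathcal{R}_{P,3}$, and otherwise $V$ lies in a codim-$2$ component of $\RP$ giving order $2$), hence so does $D|_{H_F}$. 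On the other hand, $A_F$ vanishes to order exactly $1$ along each codim-$2$ component of $\mathcal{R}_F$, since every such component lies in $\mathcal{R}_{F,2}$ and the strict-transform class in Proposition~\ref{prop:relateAdjoints} applied to $F$ has coefficient $-1$ along the corresponding exceptional divisor. Thus $(\prod_k L_k^2)\cdot A_F$ vanishes to order only $1$ along $V$, contradicting the required order at least $2$. This completes the contradiction in every case, so $\ell_F$ divides every $D\in\Gamma_P$ and $H_F$ is therefore a fixed component of $\Gamma_P$.
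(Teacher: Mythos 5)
Your reduction---restricting $D$ to $H_F$, splitting off the $d-e-1$ double hyperplanes $L_k$, and bounding the degree of the residual form $G$ via the minimal degree $e-n$ of forms on $H_F$ vanishing on $\RF$---is the same skeleton as the paper's proof, and it correctly settles the case $e=n$ and the case $\deg G<e-n$. The genuine gap is in your boundary subcase $d=e+n+2$. The paper never needs that detour: it records that the residual form is not merely vanishing but \emph{singular} (multiplicity at least two) along $\RF$, and no form of degree $e-n$ can be singular along the nonempty $\RF$, since (for instance) its first partials would be forms of degree $e-n-1$ vanishing on $\RF$, contradicting the uniqueness of the adjoint in degree $e-n$ from Theorem~\ref{thm:adjoint}. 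You only extract simple vanishing of $G$ along $\RF$, and your patch for the resulting boundary case $G=\lambda A_F$ rests on an unproven ``combinatorial count'' producing a codimension-two component $V$ of $\RF$ lying on no $L_k$. In $\PP^3$ this avoidance is immediate from simplicity (a residual point of the polygon $F$ already lies on three planes of $\HP$), but for general $n$ you neither prove it nor can it be proved in this generality: $\RF$ need not contain \emph{any} codimension-two component, e.g.\ when $F$ is a simple polytope other than a simplex in which every two facets share a ridge (duals of $2$-neighborly simplicial polytopes, which exist in dimension at least four); and even when such components exist, your bound that at most $n-3$ of the $n+1$ hyperplanes $L_k$ pass through a fixed $V$ does not by itself produce a $V$ avoiding all of them.

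Two further weak points: you import ``$A_F$ has multiplicity exactly one along each codimension-two component of $\RF$'' from the proof of Proposition~\ref{prop:relateAdjoints}, where it is an assertion rather than something you verify and which your contradiction genuinely needs (if $A_F$ had a worse singularity at your chosen $V$, no contradiction arises there); and your parenthetical that the $L_k$ ``generically'' do not contain components of $\RF$ is too weak as stated, since the lemma is claimed for every polytope with simple $\HP$, not a generic one---this containment is exactly what must be excluded for the multiplicity along $\RF$ to pass to the residual form. The clean repair is to follow the paper: carry the full multiplicity-two condition along $\RF$ over to $G$ and observe that this already rules out $\deg G=e-n$, making the entire boundary-case apparatus unnecessary.
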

\begin{proof}
 By Corollary \ref{cor:residualRestricted}, the intersection $\RP\cap H_F$ is the union of $d-e-1$ hyperplanes in $H_F$ and $\RF$, the residual arrangement of $F$.
Any hypersurface $D$ in $\Gamma_P$ must be singular along the $d-e-1$ hyperplanes in $H_F$ and have multiplicity  at least two along $\RF$. 
  Therefore, if $D$ does not contain $H_F$, then $D \cap H_F$ is a hypersurface in $H_F$ which contains the $d-e-1$ double hyperplanes. 
The residual hypersurface in $H_F$ has degree $d- 2(d-e-1)=2e-d+2$ and is singular along $\RF$.
If $e > n$, the facet $F$ is not a simplex and its residual arrangement $\mathcal{R}_F$ is not empty.
  Since the adjoint hypersurface $A_F$ is the unique hypersurface in $H_F$ of degree $e-n$ that contains $\RF$ (by Theorem~\ref{thm:adjointIntro}), any hypersurface that is singular along 
the non-empty residual arrangement $\RF$ must have degree at least $e-n+1$.
  Thus, if $e > n$, we have $2e-d+2\geq e-n+1$, i.e. $e\geq d-n-1$. 
If $e = n$, so if $F$ is a simplex, we can only conclude that $2e-d+2 \geq 0$, i.e. $e \geq d-n-2$.  
%  If $e>3$, the $e$-gon has a unique adjoint,  a unique plane curve of degree $e-3$ through the $\binom{e}{2}-e$ isolated points.  The residual curve must therefore have degree at least $e-2$.    Therefore $d\leq e+4$.  If $e=3$, the residual curve has degree $8-d\geq 0$.
  \end{proof}

This lemma suffices to find all polytopes in $\PP^2$ and in $\PP^3$ which have polytopal hypersurfaces with smooth strict transforms under $\pi_P^s$.

\begin{proof}[{Proof of Proposition~\ref{prop:TypesInDim2}}]
If $P$ is a $d$-gon, every facet of $P$ has exactly two facets.
Hence, if $d > 6$, then Lemma~\ref{irred} implies that the only divisor in $\Gamma_P$ is $\HP$.
Since $\HP$ is reducible, its strict transform $\tHP$ cannot be smooth.
 
For all $d\leq 6$, the linear system $\Gamma_P$ has positive dimension and  base points only at  the points in $\RP$, so by Bertini's theorem a general polygonal curve $D \in \Gamma_P$ is irreducible  with simple nodes exactly at the points in $\RP$.  The strict transform $\tilde D$ is a smooth elliptic curve.
\end{proof}

\begin{corollary} \label{cor:irredP3}
Let $P$ be a full-dimensional polytope in $\PP^3$ with a simple plane arrangement~$\HP$
and an irreducible polytopal surface in $\Gamma_P$.
The combinatorial type of $P$ is one of the nine combinatorial types in Table~\ref{tab:polytopes}.
\end{corollary}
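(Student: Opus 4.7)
The plan is to apply Lemma~\ref{irred} facet-by-facet, combine the resulting constraints with the identity $\sum_F e_F = 6(d-2)$ for simple $3$-polytopes to bound $d$, and then finish by invoking the explicit case analysis carried out in Subsections~\ref{ssec:prisms} and \ref{ssec:Nprisms}.

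Since $\Gamma_P$ contains an irreducible hypersurface of degree $d$, no facet plane $H_F$ can be a fixed component of $\Gamma_P$. Applying Lemma~\ref{irred} with $n=3$ to each facet $F$ of $P$ with $e_F$ edges forces, for every $F$, either $e_F = 3$ together with $d \leq 8$, or else $e_F \geq 4$ together with $e_F \geq d - 4$. Equivalently, every facet satisfies $e_F \in \{3\} \cup \{d-4, d-3, \ldots\}$, and the value $e_F = 3$ is excluded as soon as $d \geq 9$.

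Next I would combine this with the combinatorial identity $\sum_F e_F = 6(d-2)$, which follows from $3V = 2E$ (valid for any simple $3$-polytope) and Euler's formula. If $d \geq 9$, then every facet would satisfy $e_F \geq d - 4 \geq 5$, so
\[
6(d-2) \;=\; \sum_F e_F \;\geq\; d(d-4),
\]
giving $d^2 - 10d + 12 \leq 0$. The real roots of this quadratic are $5 \pm \sqrt{13}$, so $d \leq 5 + \sqrt{13} < 9$, contradicting $d \geq 9$. Hence $d \leq 8$.

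For the remaining values $d \in \{4,5,6,7,8\}$, Lemma~\ref{irred} imposes no further constraint on the facet sizes, and the corollary reduces to a finite combinatorial verification: one must check, among the simple $3$-polytopes with at most $8$ facets, which ones admit an irreducible member of $\Gamma_P$. I would complete the proof by appealing to the explicit type-by-type description of $\Gamma_P$ given in Subsections~\ref{ssec:prisms} and \ref{ssec:Nprisms}, which identifies precisely the nine combinatorial types listed in Table~\ref{tab:polytopes} as those admitting an irreducible polytopal surface. The main obstacle is this last step: there are substantially more simple $3$-polytopes with $d \leq 8$ facets than nine, and the remaining combinatorial types must be excluded by fixed components of $\Gamma_P$ that are \emph{not} facet hyperplanes and hence lie outside the reach of Lemma~\ref{irred}. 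Identifying these additional fixed components is what the detailed subsection analysis accomplishes.
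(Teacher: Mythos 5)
Your reduction to $d \le 8$ is correct and is essentially the paper's own argument (the paper first deduces $d\le 9$ from the uniform bound $e_F\ge d-5$ and then eliminates $d=9$ separately; your direct use of $e_F\ge d-4$ when $d\ge 9$ is an equivalent variant). The genuine gap is the last step, which you yourself flag as ``the main obstacle'' but do not fill. There are $23$ combinatorial types of simple $3$-polytopes with at most $8$ facets, and only nine of them appear in Table~\ref{tab:polytopes}; the content of the corollary is precisely that the remaining $14$ types (the twelve types with $d=8$ that have a triangular facet, and the two types with $d=7$ shown in Figure~\ref{fig:combType7}) admit no irreducible member of $\Gamma_P$. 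Subsections~\ref{ssec:prisms} and~\ref{ssec:Nprisms} cannot supply these exclusions: together with Remark~\ref{rem:smoothPolytSurfInTable} they treat only the nine listed types and establish the \emph{positive} direction (existence of irreducible, even smooth-after-blowup, polytopal surfaces), which is not what the corollary asserts. The exclusion of the other $14$ types is carried out inside the paper's proof of this very corollary, so an appeal to the later subsections is circular for exactly the cases that matter.

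The missing arguments are geometric and go beyond Lemma~\ref{irred}, even though they again end by exhibiting a facet plane as a fixed component. For $d=8$ with a triangular facet, the paper restricts a hypothetical irreducible $D\in\Gamma_P$ to the plane $H_F$ of a facet $F$ with $e=|V(F)|\le 6$ adjacent to the triangle: by Corollary~\ref{cor:residualRestricted} the curve residual to the $7-e$ double lines has degree $2(e-3)$, must have multiplicity two at the $\binom{e}{2}-e$ points of $\mathcal{R}_F$ and multiplicity \emph{three} at the isolated point of $\RP$ produced by the triangular facet; uniqueness of the adjoint curve $A_F$ (for $e\in\{4,5\}$), respectively a pencil computation (for $e=6$), shows no such curve exists. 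For $d=7$, the two excluded types have a hexagonal facet whose plane contains three non-collinear isolated points of $\RP$ plus six points on lines of $\RP$; intersecting a hypothetical $D$ with the plane cubic through these nine points and comparing complete-intersection divisors forces the three isolated points to be collinear, a contradiction, so that plane is a fixed component. These extra multiplicity conditions at isolated points of $\RP$ are exactly what Lemma~\ref{irred} does not see, and without arguments of this kind your proof is incomplete for the $14$ remaining types.
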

\begin{proof} 
Let $d$ denote the number of facets of $P$.
Since there are three facets through each of the $2d-4$ vertices in $P$, we have $\sum_{F \in \mathcal{F}(P)} |V(F)| = 6d-12$. 
By Lemma~\ref{irred}, since $\Gamma_P$ contains an irreducible element, each facet of $P$ has at least  $d-5$ edges. 
Therefore $d(d-5)\leq \sum_{F \in \mathcal{F}(P)} |V(F)| = 6d-12$, which implies $d\leq 9$.  
%So if $P$ has degree $\geq 9$  it must have $e$-gon facets with $e< 6$.   The planes of these facets are, by Lemma \ref{irred}, components of any surface in $\Gamma_P$.  
If $d=9$, Lemma~\ref{irred} further implies that $P$ cannot have triangular or quadrangular facets.
Thus each of the nine facets has at least five edges, 
so $9 \cdot 5 \leq \sum_{F \in \mathcal{F}(P)} |V(F)| = 6\cdot 9-12$, a contradiction.
Hence, we have shown that $d \leq 8$.

All combinatorial types of simple three-dimensional polytopes with at most six facets are depicted in Table~\ref{tab:polytopes}.
Hence, we have to investigate the cases that $P$ has seven or eight facets.

Let us first assume that  $d=8$.
There are exactly $14$ combinatorial types of simple three-
\linebreak[4]
dimensional polytopes with eight facets.
Two of these are depicted in the last two rows of Table~\ref{tab:polytopes}.
The other $12$ have a triangular facet. 
We will show now that these $12$ combinatorial types do not admit an irreducible divisor $D \in \Gamma_P$.
For this, we assume for contradiction that $P$ has a triangular facet. 
At least one of the triangular facets of $P$ has a neighboring facet $F$ with $e := |V(F)| \leq 6$.
Due to the triangular facet, the residual arrangement $\RP$ has an isolated point $p$, which is contained in the plane $H_F$ spanned by the facet $F$.
By Corollary~\ref{cor:residualRestricted}, the intersection $\RP \cap H_F$ consists of $7-e$ lines and the residual arrangement $\mathcal{R}_F$ of the facet $F$ which is $\binom{e}{2}-e$ points, among them $p$.
An irreducible polytopal surface $D \in \Gamma_P$ intersects $H_F$ in a curve of degree eight, which is double along the $7-e$ lines.
The residual curve $C$ in $H_F$ has degree $8-2(7-e) = 2(e-3)$ and multiplicity at least two at the $\binom{e}{2}-e$ points of $\mathcal{R}_F$.
At the isolated point $p$, the curve $C$ must even have multiplicity three.
We observe that $e \neq 3$ (since $\mathcal{R}_F \neq \emptyset$).
If $e \in \lbrace 4,5 \rbrace$, there is a unique curve of degree $2(e-3)$ with multiplicity two at the $\binom{e}{2}-e$ points (namely the adjoint $A_F$ doubled), but this curve does not have multiplicity three at $p$.
If $e=6$, there is a pencil of curves of degree $2(e-3)$ with multiplicity two at the nine points of $\mathcal{R}_F$, but none with the additional multiplicity three at $p$.
This contradicts the construction of the curve $C$.

Let us now assume that $d=7$.
There are exactly five combinatorial types of simple three-dimensional polytopes with seven facets.
Three of them are shown in Table~\ref{tab:polytopes}.
We will show in the following that the remaining two combinatorial types do not admit an irreducible divisor $D \in \Gamma_P$.
Both combinatorial types are depicted in Figure~\ref{fig:combType7}.
In both cases there is a hexagonal facet $F$.
The plane $H_F$ of this facet contains three isolated points $p_1, p_2, p_3$ of $\RP$ that are not collinear.  
In fact the plane $H_F$ intersects $\RP$ in exactly six points $q_1, \ldots, q_6$ on lines in $\RP$ in addition to the isolated points.
Through the nine points there is an irreducible plane cubic curve $C$.  
Any irreducible polytopal surface  $D \in \Gamma_P$ would intersect the plane $H_F$ in a curve $C_D$ of degree seven with double points at $q_1, \ldots, q_6$ and triple points at $p_1, p_2, p_3$.
We consider the intersection on $C$ with $C_D$  and with the union $C_L$ of the six lines spanned by the edges of the hexagon $F$.
The latter is a curve with a node at each of the nine points, so on $C$ the intersection with $C_L$ forms a divisor of degree $18$ that is a complete intersection.
Unless $C$ is a component of $C_D$, their interection forms a divisor of degree $21$ that is also a complete intersection.
The three isolated points $p_1,p_2,p_3$ form a divisor on $C$ that is precisely the difference between these two divisors.
It is therefore also a complete intersection, i.e. the intersection of $C$ with a line, which contradicts the observation that $p_1, p_2, p_3$ are not collinear.
If $C$ is a component of $C_D$, then we may use a similar argument for $C_D\setminus C$, using the fact that $q_1, \ldots, q_6$ do not lie on a conic.  
These contradictions imply that the plane $H_F$ must be a component of any polytopal surface.
\end{proof}

\begin{figure}
\centering
\includegraphics[width=0.25\textwidth]{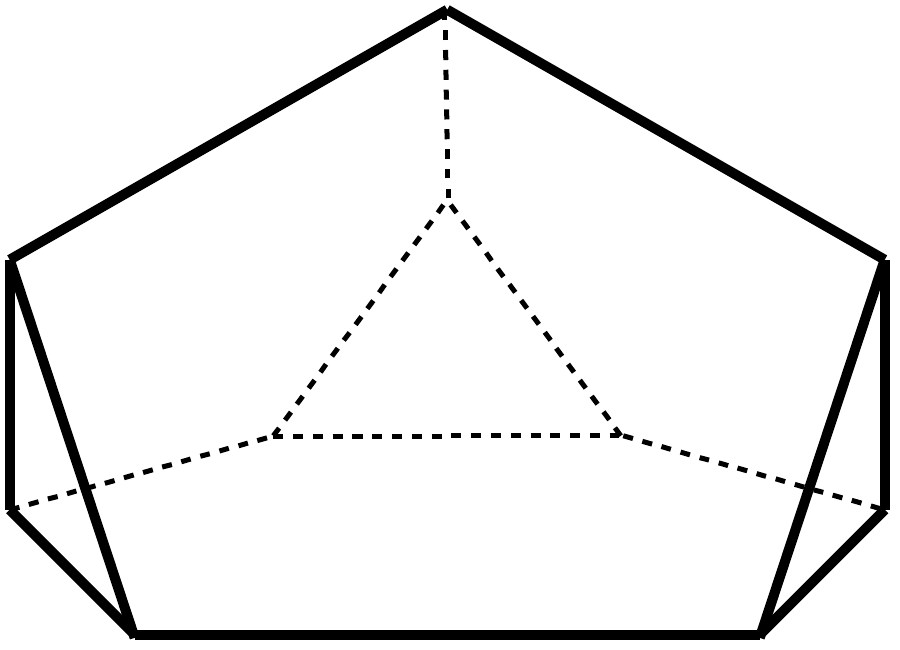}
\hspace*{35mm}
\includegraphics[width=0.25\textwidth]{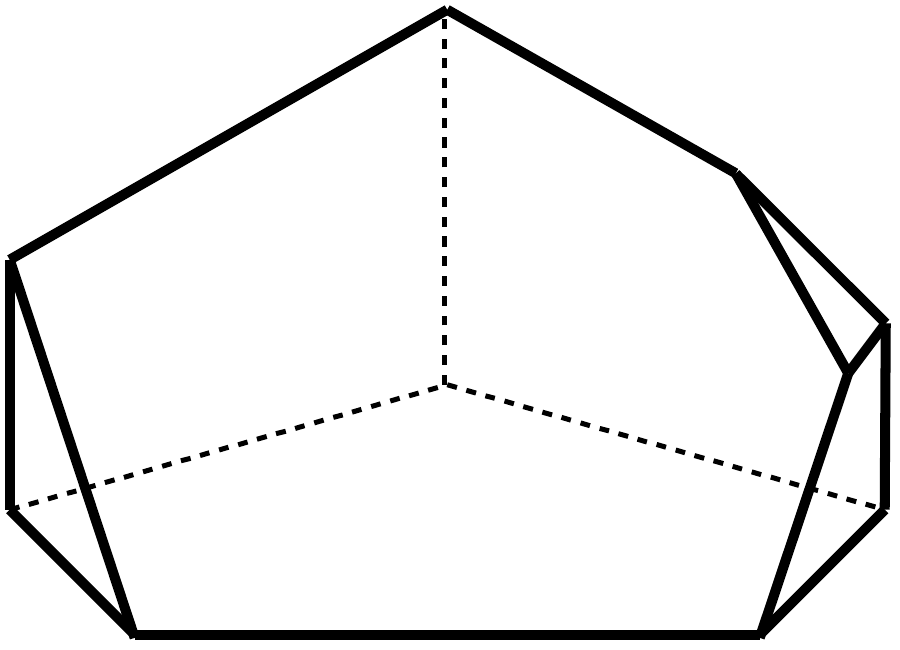}
\caption{Combinatorial types of simple polytopes in $3$-space with $7$ facets not shown in Table~\ref{tab:polytopes}.
Their vectors of facet sizes are $(6,5,5,5,3,3,3)$ and $(6,5,5,4,4,3,3)$.}
\label{fig:combType7}
\end{figure}

\begin{remark} \label{rem:smoothPolytSurfInTable}
We verified
with symbolic computations in \texttt{Macaulay2}~\cite{m2}
that,
for each combinatorial type $\mathcal{C}$ in Table~\ref{tab:polytopes},
the general polytopal surface $D \in \Gamma_P$ of a general polytope $P$ of type $\mathcal{C}$ is irreducible
and its strict transform $\tilde D$ in $X_P^s$ is smooth.
 \hfill$\diamondsuit$
\end{remark}

Corollary~\ref{cor:irredP3} and Remark~\ref{rem:smoothPolytSurfInTable} show that
the combinatorial types of three-dimensional simple polytopes which admit smooth strict transforms of polytopal surfaces
are exactly the combinatorial types listed in Table~\ref{tab:polytopes}.
Our next task is to determine the birational type of the general polytopal surfaces associated to these combinatoral types.
This is immediate for tetrahedra. 
For a general polytope $P$ of one of the other combinatorial types, we determine the canonical curves on its polytopal surfaces, starting with the canonical curve on the strict transform  $\tHP$ of  $\HP$.

\begin{example}When $P$ is a tetrahedron, its residual arrangement $\RP$ is empty.
Any quartic surface in $\PP^3$ belongs to $\Gamma_P$, so $\dim \Gamma_P = 34$.
\hfill$\diamondsuit$
\end{example}

\begin{lemma} \label{canonicalcurve} 
Let $P \subset \PP^3$ be a full-dimensional polytope with $d$ facets and a simple plane arrangement $\HP$.
%Assume that the vector of facets sizes is $(e_1,...,e_d)$.
On $\tHP$, the canonical curve $K_{\tHP}$ consists of exceptional curves and strict transforms of curves in the planes of~$\HP$. 
 The former are all contracted by the Wachspress map, while the latter are the strict transforms of the unique adjoint curve of the polygonal facet in each plane.
 These adjoint curves have degree $e-3$ in the plane of an $e$-gon, while their images by the Wachspress map are pairwise disjoint and have degree $\binom{e-3}{2}$ respectively.
 In particular, the degree of the image of $K_{\tHP}$ by the Wachspress map is 
$\sum_{F \in \mathcal{F}(P)}\binom{|V(F)|-3}{2}.$
%=4b+9c-3a-\frac{3}{2}(d-3)(d-4)(d-6).$$
%Its component in the plane of an $e$-gon is the unique adjoint curve to the $e$-gon.  
%The plane curve components of $K_{\tHP}$ are pairwise disjoint.
%Furthermore, the bicanonical system $|2K_{\tHP}|$  has a fixed curve of degree $2(e-3)$  in the plane of each $e$-gon facet in $P$ with $e \in \lbrace 4,5 \rbrace$ and a pencil of elliptic curves of degree $6$ in each plane of a hexagonal  facet.
\end{lemma}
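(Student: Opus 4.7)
The plan is to apply adjunction on the smooth blowup $X_P^s$, decompose the resulting canonical curve plane by plane, and then invoke the $2$D Wachspress theory from Example~\ref{ex:surface}.

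From the class computations in Section~\ref{sec:deformations}, in $\PP^3$ one has
$K_{X_P^s}=-4h+\sum_c(c-1)E_{c}+\sum_c(c-1)E_{c,s}$
and
$[\tHP]=dh-\sum_c c\,E_{c}-\sum_c c\,E_{c,s}$,
so adjunction gives
$K_{\tHP}=\bigl((d-4)h-\sum_cE_c-\sum_cE_{c,s}\bigr)\big|_{\tHP}$,
which is exactly the restriction of the class of the strict transform $\tilde A_P$ (up to components supported on exceptional divisors). To unpack this plane by plane, I would fix a plane $H$ spanned by an $e$-gon facet $F$ and invoke Corollary~\ref{cor:residualRestricted}: the restriction $A_P|_H$ is a plane curve of degree $d-4$ that is forced to contain the $d-e-1$ extraneous lines of $(\RP)\cap H$ as components, so the residual plane curve has degree $(d-4)-(d-e-1)=e-3$, passes through $\mathcal{R}_F$, and by the uniqueness in Theorem~\ref{thm:adjoint} applied to $F\subset H$ it must be the adjoint $A_F$ of the polygon $F$. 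Hence $K_{\tHP}$ decomposes, plane by plane, into exceptional curves over $(\RP)\cap H$ together with the strict transform $\tilde A_F$.

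The exceptional curves all lie over the base locus $\RP$ of the Wachspress map (Theorem~\ref{thm:baseLocus}) and hence are contracted by $\tilde\omega_P$. For each adjoint component $\tilde A_F$, Remark~\ref{rem:wachspressRestricted} states that the Wachspress coordinates restricted to $\tilde H$ share a common factor vanishing on the $d-e-1$ extraneous lines; dividing this out recovers the polygonal Wachspress map $\omega_F$. Since $\tilde A_F$ is disjoint from the strict transforms of those extraneous lines (they do not contain any edge of $F$), the image $\tilde\omega_P(\tilde A_F)$ coincides with $\omega_F(A_F)$, and Example~\ref{ex:surface} gives its degree as $\binom{e-3}{2}$. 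Summing over the facets yields the stated total $\sum_{F}\binom{|V(F)|-3}{2}$.

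For pairwise disjointness of the image curves in $W_P$, the key observation is that $\tilde A_F$ meets the strict transform in $X_P^s$ of any edge line of $F$ only at points of $\mathcal{R}_F\subset\RP$ (since $A_F$ has degree $e-3$ and, by Theorem~\ref{thm:adjoint}, avoids the vertices of $F$); because these points are blown up, $\tilde A_F$ is disjoint from the strict transforms of the edges of $F$. Thus if two planes of $\HP$ meet along an edge of $P$, the adjoints in those planes cannot meet in $X_P^s$. If two planes instead meet along a line of $\RP$, their intersection in $X_P^s$ lies in an exceptional divisor which $\tilde\omega_P$ contracts onto a scroll, and a ruling analysis on this scroll separates the two adjoint images. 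I expect this last disjointness check along the contracted exceptional scrolls over lines of $\RP$ to be the main technical obstacle; everything else reduces cleanly to plane-by-plane adjunction together with the base-locus contraction from Theorem~\ref{thm:baseLocus}.
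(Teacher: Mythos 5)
Your identification of the facet adjoints is a sound variant of the paper's argument, but it takes a slightly different route: you obtain the class of $K_{\tHP}$ by global adjunction on $X_P^s$ (essentially Proposition~\ref{prop:relateAdjoints} applied to $D=\HP$) and then restrict the actual adjoint surface $A_P$ to each plane $H_F$, peeling off the $d-e-1$ lines of $\RP\cap H_F$ via Corollary~\ref{cor:residualRestricted} and invoking the uniqueness in Theorem~\ref{thm:adjoint} to recognize the residual curve as $A_F$. The paper never uses $A_P$ here; it works on $X_P$ and computes the restriction of $K_{\tHP}$ to each component $\tilde H_F$ as $K_{\tilde H_F}$ plus the double curve $\tilde{\mathcal H}_F$, which gives directly the class $(e-3)h-\sum E_p-\sum E_\ell=[\tilde A_F]$ with no leftover pieces on that component. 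This intrinsic computation is what allows the clean statement that everything else in $K_{\tHP}$ is exceptional for $X_P^s\to X_P$ and hence contracted, because $\tilde\omega_P$ factors through $X_P$. Your justification for contraction --- that the exceptional components ``lie over the base locus $\RP$'' --- is not a valid inference: exceptional divisors over lines of $\RP$ map onto scrolls, and the exceptional curves over the points of $\mathcal{R}_F$ inside a plane are mapped to \emph{lines} by the polygonal Wachspress map (Example~\ref{ex:surface}), so lying over $\RP$ does not force contraction; one has to identify which exceptional components actually occur, which is exactly what the component-wise computation settles. Your use of Remark~\ref{rem:wachspressRestricted} and Example~\ref{ex:surface} for the degree $\binom{e-3}{2}$, and your edge-line disjointness argument ($A_F\cap L=\mathcal{R}_F\cap L\subset\RP$, separated by the blowups, so any intersection of $\tilde A_F$ and $\tilde A_{F'}$ would have to lie on the strict transform of $L$ and cannot), agree with the paper up to the same implicit transversality assumptions.

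The step you flag as the ``main technical obstacle'' is not an obstacle, and your framing of it contains a misconception. If $H_F\cap H_{F'}=L$ is a line of $\RP$, then $L$ is a blowup center, and since the two planes meet transversally along $L$ their strict transforms meet the exceptional divisor over $L$ in two disjoint sections (cf.\ Lemma~\ref{exceptional}); hence $\tilde H_F$ and $\tilde H_{F'}$ are already disjoint in $X_P$ (and in $X_P^s$). In particular $\tilde A_F\subset\tilde H_F$ and $\tilde A_{F'}\subset\tilde H_{F'}$ cannot meet at all: your premise that ``their intersection in $X_P^s$ lies in an exceptional divisor'' is false, and no ruling analysis on the image scroll is needed. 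This is precisely the one-line argument the paper uses for this case; as written, your proposal leaves it open.
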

\begin{proof}

%The canonical divisor on $\tHP$ is 
%$$K_{\tHP}\cong-4h+\sum_p 2F_p+\sum F_l+\sum 2F_t$$ and its restriction to 
The  blowup $\pi_P$ resolves the base locus of the Wachspress map $\omega_P$.
Thus the exceptional locus of the morphism $X_P^s\to X_P$ is contracted by the Wachspress morphism $\tilde \omega_P$.
So for the lemma it suffices to consider the strict transform of $H_P$ and its components in~$X_P$.

We consider the strict transform $\tilde H_F$ of a plane $H_F$ in $\HP$ spanned by an $e$-gon facet $F$.
Let $E_p$ be the restriction of the exceptional divisor over an isolated point $p\in \RP$ and
let $E_\ell$ be the restriction of the exceptional divisor that dominates a line $\ell\subset \RP$.
We denote by $h$ the pullback to $X_P$ of the class of a plane in $\PP^3$.
We observe that
$\tilde H_F$ is transverse to the strict transform of the other planes along the strict transform $\tilde{\mathcal{H}}_F$ of the union $\mathcal{H}_F$ of the lines spanned by the edges of the $e$-gon $F$.
The curve $\tilde{\mathcal{H}}_F$ belongs to the class $eh-\sum_{p\in H_F} 2E_p -\sum_{\ell\not\subset H_F}2E_\ell$ restricted to $\tilde H_F$.
The class of the canonical divisor on $\tilde H_F$ is $-3h+\sum_{p\in H_F} E_p+\sum_{\ell\not\subset H_F}E_\ell$.
The restriction  $K_{\tHP,F}$ of the canonical divisor on $\tHP$ to $\tilde H_F$ is equivalent to the sum of the canonical divisor on $\tilde H_F$ and the intersection $\tilde{\mathcal{H}}_F$ of $\tilde H_F$ with the residual $\tHP\setminus \tilde H_F$.  
%The degree of the canonical curve $K_{\tHP}$ is computed as an intersection number on $X_P$. It is 
%$$h_{W_P}\cdot K_{\tHP}=h_{W_P}\cdot \tilde D\cdot \tilde A_P=d(d-4)-h_{W_P}\cdot \sum_k 2G_k^2=d(d-4)- 2\binom{d-3}{2}=3(d-4).$$
%The canonical divisor on $\tilde H_e$ is the restriction of $-3h_{W_P}+\sum_i E_i+\sum_j F_j$.  The restriction  $K_{\tHP,e}$ of the canonical divisor on $\tHP$ to $\tilde H_e$ is equivalent to the sum of the canonical divisor on $\tilde H_e$ and the intersection $\tilde C_e$ of $\tilde H_e$ with the residual $\tHP\setminus \tilde H_e$.   
So
$$K_{\tHP,F}\cong(e-3)h
-\sum_{\substack{\text{isol. \!pt.} \\ p \in \RP, \\  p\in H_F}} E_p
-\sum_{\substack{\text{line} \\ \ell \subset  \RP, \\ \ell\notin H_F}}E_\ell,$$ 
which is the class of the strict transform $\tilde A_F$ of the unique adjoint curve $A_F$ of the $e$-gon $F$. 
Thus, the canonical curve on $\tHP$ is the union of the strict transforms of the adjoint curves of the facets of $P$.
Due to Remark~\ref{rem:wachspressRestricted} and Example~\ref{ex:surface}, the degree of the image of the canonical curve by the Wachspress map is $\sum_{F \in \mathcal{F}(P)}\binom{|V(F)|-3}{2}.$
%The degree may also be computed by the adjunction, 
%$h_{W_P}\cdot K_{\bar{\mathcal{H}}_P}=2 g(\bar{\mathcal{H}}_P)-2-{\rm deg}(\bar{\mathcal{H}}_P)$ together with the formulas from  Proposition \ref{prop:wachspressVariety}.

It is enough to show that the strict transforms of the adjoint curves are pairwise disjoint.
For this, we consider two facets $F$ and $F'$ of $P$ with $e$ and $e'$ vertices, respectively.
The planes $H_F$ and $H_{F'}$ spanned by these facets intersect in a line $L$.

We assume first that the line $L$ contains an edge of $P$.
In that case, the adjoint $A_F$ intersects $L$ precisely in the $e-3$ points of intersection on $L$ with the other lines in $\mathcal{H}_F$ which are not spanned by the two neighboring edges in $F$ of the edge spanning $L$.
The analogous statement holds for $A_{F'}$. 
Hence, the adjoint curves $A_F$ and $A_{F'}$ intersect exactly at the isolated points of $\RP$ which are contained in $L$.
These points are blown up on $X_P$.
Since  $H_F$ and $H_{F'}$ intersect transversally along $L$, the adjoints $A_F$ and $A_{F'}$ meet transversally along their points of intersection, so their strict transforms do not intersect on $X_P$. 
 
 If $L$ does not contain any edge of $P$, then $L$ is a line in the residual arrangement $\RP$ and is blown up on $X_P$.  
Since  $H_F$ and $H_{F'}$ meet transversally along $L$, their strict transforms do not intersect on  $X_P$. 
Therefore  the two curves $\tilde A_F$ and $\tilde A_{F'}$ are also disjoint.
%The bicanonical system restricts to each plane of $\tHP$  as the class of twice the unique adjoint curve.  When $e=3,4,5$ twice the adjoint curve is still unique in its linear system.  In the hexagonal case, $e=6$,  the adjoint curve is a plane cubic passing through nine residual points of intersection of the $6$-lines in the hexagon. Twice  this curve is of course singular in the $9$ points, but so is also the hexagon itself, so there is at least a pencil of sextic plane curves singular at the nine points.  Since the adjoint curve is unique, there is at most a pencil of such sextics, so the lemma follows.
\end{proof}

%Any smooth surface $\tilde D$ 

%Let $D$ be any surface in $\Gamma_P$. 
%The canonical curve $K_{\tilde D}$ on $\tilde D$ is linearly equivalent to $K_{\tHP}$ on the unique common adjoint $\tilde A_P$.    The arithmetic genus $g(K_{\tilde D})$ of $K_{\tilde D}$ is computed by adjunction on $X$:
%$$2g(K_{\tilde D})-2=\tilde A_P\cdot \tilde D\cdot(\tilde D+\tilde A_P+K_X)=2d(d-4)^2+4\binom{d-3}{2} G_i^2-(6d-16)\binom{d-3}{2}$$
%If $A_P$ is smooth, a line on $A_P$ has, by adjunction, selfintersection $G_i^2=-2+8-d=6-d$ so in that case $2g(K_{\tilde D})-2=-3(d-4)(d^2-9d+20)$.

%Any surface $D\in \Gamma_P$ has a canonical curve that decomposes into a plane curve of degree $(e-3)$ for each $e$-gon facet of $P$, since these are linearly equivalent curves on the adjoint $A_P$.  
\begin{corollary}\label{K3orelliptic} 
Let $P \subset \PP^3$ be a full-dimensional polytope with a simple plane arrangement~$\HP$
and a smooth strict transform $\tilde D$ in $X_P^s$ of a polytopal surface $D \in \Gamma_P$.
%Let $D\in \Gamma_P$ and assume that the strict transform $\tilde D\subset X_P$ is a smooth surface.  Then $P$ has no $e$-gon facets with $e>6$.
If $P$ has a hexagonal facet, then the general such $D$ is an elliptic surface.
If $P$ has only triangular, quadrangular and pentagonal facets, then general such $D$ is a $K3$-surface.
\end{corollary}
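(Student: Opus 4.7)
The proof starts from Proposition~\ref{prop:relateAdjoints}, which for a smooth strict transform $\tilde D$ of a general $D \in \Gamma_P$ exhibits the canonical divisor as
$$K_{\tilde D} = (\tilde A_P + E_P)|_{\tilde D},$$
with $E_P$ supported on exceptional divisors over the singular locus of $\RP$ and hence contracted by the Wachspress map. For a generic polytopal hypersurface the multiplicities $a_{D,i}$ appearing in Proposition~\ref{prop:relateAdjoints} vanish, so $[\tilde D] = [\tHP]$ in $X_P^s$, and the adjunction calculation carried out for $\tHP$ in Lemma~\ref{canonicalcurve} transports to $\tilde D$. Modulo the exceptional components, the canonical divisor of $\tilde D$ is therefore represented by a sum over the facets $F$ of $P$ of the strict transforms of the adjoint curves $A_F \subset H_F$, each of degree $|V(F)|-3$.

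The argument then pivots on the arithmetic genus of these adjoint curves. If every facet of $P$ has at most five edges, each $A_F$ is empty ($|V(F)|=3$), a line ($|V(F)|=4$), or a smooth plane conic ($|V(F)|=5$); in all cases a rational curve. If $P$ carries a hexagonal facet $F$, then $A_F$ is a plane cubic through the nine points of $\mathcal R_F$, and for a general polytope of the relevant combinatorial type these nine points lie in sufficiently general position that $A_F$ is a smooth elliptic curve.

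In the hexagonal case I would verify, using the intersection calculus on $X_P^s$ developed in Section~\ref{sec:barycentric}, that the strict transform $\tilde A_F \subset \tilde D$ satisfies $\tilde A_F^2 = 0$ and $\tilde A_F \cdot K_{\tilde D} = 0$. Riemann--Roch on $\tilde D$ then produces a pencil of curves linearly equivalent to $\tilde A_F$ whose generic member is an irreducible smooth curve of arithmetic genus one, and this pencil defines an elliptic fibration $\tilde D \to \PP^1$, exhibiting $\tilde D$ as an elliptic surface.

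In the non-hexagonal case I would compute $K_{\tilde D}^2 = 0$ and $\chi(\mathcal O_{\tilde D}) = 2$ by the same intersection calculus; combined with the explicit effective representative of $K_{\tilde D}$ described above (whose components are all rational curves, which rules out Enriques), the Enriques--Kodaira classification identifies $\tilde D$ as a blow-up of a K3 surface. The minimal K3 is obtained by contracting the $(-1)$-curves coming from quadrangle adjoints and, for the non-minimal entries of Table~\ref{tab:polytopes}, from pentagon adjoints as well. The principal obstacle is the intersection-theoretic bookkeeping on $X_P^s$ with its many exceptional classes $E_c, E_{c,s}, F_i$: it must be carried out case by case for the nine combinatorial types in Table~\ref{tab:polytopes}, but once $K_{\tilde D}^2$ and $\chi(\mathcal O_{\tilde D})$ are settled the birational type follows from Enriques--Kodaira.
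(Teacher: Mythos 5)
Your overall strategy is the paper's: read off the (unique) canonical curve of $\tilde D$ from Proposition~\ref{prop:relateAdjoints} and Lemma~\ref{canonicalcurve}, and let the dichotomy ``hexagonal facet (plane cubic adjoint) versus at most pentagonal facets (rational adjoint curves)'' decide the birational type. But two of your key steps do not hold up. First, your claim that $K_{\tilde D}^2=0$ in the non-hexagonal case is false in general: for most rows of Table~\ref{tab:polytopes} the surface is a \emph{non-minimal} K3, so $K_{\tilde D}$ is a nonzero effective sum of exceptional curves (e.g.\ the pentagon adjoints become $(-1)$-lines) and $K_{\tilde D}^2<0$; the Enriques--Kodaira argument as you set it up ($K^2=0$, $\chi=2$) therefore never gets off the ground, and even with the correct $K^2$ you would still need to know that the rational components of the canonical curve are exceptional curves of the first kind (a properly elliptic surface can also have an effective canonical divisor supported on rational curves). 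Second, you never establish regularity, and your substitute --- computing $\chi(\mathcal{O}_{\tilde D})=2$ ``by the same intersection calculus'' --- is not available: the calculus of Section~\ref{sec:barycentric} computes degrees and genera of divisor classes, not holomorphic Euler characteristics. The paper gets $p_g=1$ from the uniqueness of the canonical curve and $q=0$ from the sequence $0\to\mathcal{O}_{X_P^s}(K_{X_P^s})\to\mathcal{O}_{X_P^s}(\tilde A_P)\to\mathcal{O}_{\tilde D}(K_{\tilde D})\to 0$ together with the vanishing $h^1(\PP^3,\mathcal{I}_{\RP}(d-4))=0$ of Theorem~\ref{thm:adjoint}; this cohomological input is genuinely needed (with $q>0$ a surface whose effective canonical divisor has only rational components could a priori be a blown-up abelian surface), and it is entirely missing from your proposal.

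In the hexagonal case your Riemann--Roch construction of an elliptic pencil also has a gap: by Serre duality $h^2(\mathcal{O}_{\tilde D}(\tilde A_F))=h^0(\mathcal{O}_{\tilde D}(K_{\tilde D}-\tilde A_F))$, and this is nonzero because $K_{\tilde D}-\tilde A_F$ is effective (in both elliptic rows of Table~\ref{tab:polytopes} there are two hexagonal facets, plus further components), so Riemann--Roch only gives $h^0-h^1+h^2=2$ and $h^0\geq 2$ does not follow without controlling $h^1$. An explicit fibration is also more than the statement requires: since only rational curves are contracted in passing to a minimal model, the plane cubic components of the canonical curve survive, so the minimal model of the regular surface $\tilde D$ has a nonzero effective canonical divisor containing elliptic curves and hence is a regular elliptic surface and in particular not a K3 --- which is how the paper concludes this case.
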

\begin{proof} 
We first show that $\tilde D$ is regular, i.e. $h^1({\cal O}_{\tilde D}(K_{\tilde D}))=0$.
Since %$h^1({\cal O}_{\tilde D}(K_{\tilde D})=0$. i.e. if $\tilde D$ is a smooth surface, it is regular:
 $h^1({\cal O}_{X_P^s}(K_{X_P^s}))=h^2({\cal O}_{X_P^s}(K_{X_P^s}))=0$, we deduce from the cohomology of the exact sequence
\begin{align*}
\begin{array}{ccccccccc}
0 & \longrightarrow & \mathcal{O}_{X_P^s} (K_{X_P^s}) & \longrightarrow & \mathcal{O}_{X_P^s}(\tilde A_P) & {\longrightarrow} & \mathcal{O}_{\tilde D}(K_{\tilde D}) & \longrightarrow & 0,\\
\hspace*{1mm} \\
\end{array}
\end{align*}
that $h^1({\cal O}_{\tilde D}(K_{\tilde D}))=h^1({\cal O}_{X_P^s}(\tilde A_P))$. 
But  $h^1({\cal O}_{X_P^s}(\tilde A_P))=h^1 (\PP^3, \mathcal{I}_{\RP}(d-4))=0$ by Theorem~\ref{thm:adjoint}, so $\tilde D$ is regular.

Next, the smooth surface $\tilde D$ has a unique canonical curve, so $\chi(\tilde D)=2$.   
If $P$ has no hexagonal facets, then, by Lemma \ref{canonicalcurve} and Theorem~\ref{thm:TypesInDim3}, the canonical curve $K_{\tHP}$  is the union of exceptional rational curves contracted by the Wachspress map and rational curves mapped to lines,
hence so is also $K_{\tilde D}$.
These curves must be exceptional curves of the first kind on $\tilde D$, i.e. a minimal model of $\tilde D$ is a $K3$-surface.
If $P$ has  a hexagonal facet, then, by Lemma \ref{canonicalcurve}, $K_{\tHP}$ contains in addition plane cubic curves, hence so does also the  canonical curve on $\tilde D$.
In particular, a minimal model of $\tilde D$ is a regular elliptic surface.
\end{proof} 

\begin{proof}[{Proof of Theorem~\ref{thm:TypesInDim3}}]
If the strict transform $\tilde D$ in $X_P^s$ of a polytopal surface $D \in \Gamma_P$ is smooth, 
then $D$ is, of course, irreducible. 
In that case, Corollary~\ref{cor:irredP3} implies that the combinatorial type of $P$ is listed in Table~\ref{tab:polytopes}.

By Remark~\ref{rem:smoothPolytSurfInTable} every combinatorial type in Table~\ref{tab:polytopes} admits a smooth strict transform $\tilde D$ in $X_P^s$ of a polytopal surface $D \in \Gamma_P$.
Their birational types are determined by Corollary~\ref{K3orelliptic}.
\end{proof}

In the remainder of this article, we provide more detailed descriptions of the Wachspress maps and polytopal surfaces associated to general polytopes of the combinatorial types listed in Table~\ref{tab:polytopes}. 
We use the notation from Proposition~\ref{prop:wachspressVariety} in what follows.
 
 \subsection{Prisms}
 \label{ssec:prisms}
 
  \begin{example}  
In Example \ref{ex:prismCube} we already described $\RP$, $A_P$ and $\omega_P$ for triangular prisms $(d=5)$ and perturbed cubes $(d=6)$.

When $P$ is a triangular prism, then $\dim \Gamma_P = 23$,  $W_P=\PP^1\times \PP^2\subset\PP^5$ in the Segre embedding and the image of the adjoint plane is a line $L$.  
%there are no such lines, each pair of quadrangular facets share a $1$-dimensional face. The planes of the three quadrangular facets intersect in an isolated point in $\RP$, while the planes of the triangular facets intersect in a line, as explained in Example \ref{firstex}.  
%   $d=5$ and  dim\;$\Gamma_P=23$.   The variety  
Let $D$ be a general polytopal surface for $P$. Then $\bar D$ is a $K3$-surface of degree $8$ and genus $5$ with Picard rank $2$: it is the intersection of the Segre threefold with a quadric hypersurface. The surface $D$ is the projection of $\bar D$ from three collinear points, namely the intersection of $\bar D$ with the line $L$. 

 When $P$ is a cube ($d=6$), then $\dim \Gamma_P = 26$ and $W_P = \PP^1\times \PP^1\times \PP^1 \subset \PP^7$ in the Segre embedding.
 For a general polytopal surface $D$, the image $\bar D\subset W_P$  is a $K3$-surface of degree $12$ and genus $7$: it is the intersection of $W_P$ with a quadric hypersurface. The image of the adjoint quadric surface is a twisted cubic curve that intersects $\bar D$ in $6$ points.   The surface $D$ is the projection of $\bar D$ from these $6$ points.
 \hfill$\diamondsuit$
 \end{example} 

From here on we assume that $d>6$.
Any two planes of quadrangular facets in the prism that do not share an edge, intersect in a line that belongs to the residual arrangment $\RP$.  
 In fact there are $(d-2)(d-5)/2$ such lines.  
The two planes of $(d-2)$-gon facets of the prism intersect in the remaining line in $\RP$, that we denote by $L_P$.
There are no isolated points in $\RP$ and the line $L_P$ is disjoint from the other lines in $\RP$.  
The double and triple points of $\RP$ therefore all lie on the intersections of three planes of quadrangular facets. 
If exactly two of the facets share an edge in $P$, there are two lines in $\RP$ through the point, 
and if no two of the three facets share an edge, there are three lines through the point that span~$\PP^3$.
The number of double points in $\RP$ is therefore $b=(d-2)(d-6)$, while the number of triple points is $c=\binom{d-2}{3}-(d-2)-(d-2)(d-6)=(d-2)(d-6)(d-7)/6$.
%When $d>5$, the singular points on $C$ appear as triple intersections in $\HP$ where all three planes contains quadrangular facets, and at most two of the three planes share a face of $P$.  If  two of the planes share a face in $P$, there are two lines in $C$ through the point, and if no two of the three planes share a face, there are three lines through the point that span $\PP^3$.
In $X_P$ there is a pencil $|B_P|$ of surfaces: the strict transforms of planes through the line $L_P$. 
The  planes of the $d-2$ quadrangular facets of $P$ cut each plane through $L_P$ in a $(d-2)$-gon. 
So the restriction of $\omega_P$ to any of the surfaces $B_P$ is the Wachspress map with respect to that polygon.
By Example~\ref{ex:surface}, the image $\bar B_P$ of such a surface has degree $\binom{d-4}{2}+1$ and spans a $\PP^{d-3}$, 
which means that $W_P$ lies in a rational normal $\PP^{d-3}$ scroll  in $\PP^{N-1}$, where $N = 2(d-2)$.
In particular the degree of this scroll is one more than its codimension, i.e. $N-d+3$, and its ideal is generated by $\binom{N-d+3}{2}$ quadrics.
%   The image of the adjoint surface $A_P$ has degree $\frac{1}{6}(d+1)(d-4)(d-6)$.
%\begin{example}
% When $P$ is a cube ($d=6$). The $3$-fold $W_P\subset \PP^7$ lies in three $\PP^3$-scrolls over $\PP^1$.  In fact $\RP$ is three skew lines, and $W_P$ is the $\PP^1\times \PP^1\times \PP^1$ in its Segre embedding in $\PP^7$.   Furthermore the general surface $\bar D$ in $\Gamma_P$ is the intersection of $W_P$ with a quadric hypersurface, so dim\;$\Gamma_P=26$,  and  $\bar A_P$ is a twisted cubic curve in a $\PP^3$ that intersects $\bar D$ in six points.  
 % $\bar D$ is a $K3$-surface of degree $12$ and genus $7$ with Picard rank $3$, it is the intersection of the threefold  $\PP^1\times\PP^1\times \PP^1$ with a quadric hypersurface.  The surface $D$ is the projection of $\bar D$ from six points that span a $\PP^3$.

% \end{example} 
 \begin{example}
When $P$ is a pentagonal prism ($d=7$; see row $6$ in Table~\ref{tab:polytopes}), then 
computations with \texttt{Macaulay2} show that 
dim\;$\Gamma_P=12$ and that the ideal of $W_P\subset\PP^9$ is arithmetically Gorenstein, generated by $15$ quadrics.  
$\bar A_P$ is a rational quartic scroll in a $\PP^5$.  Indeed, the $3$-fold $W_P\subset \PP^9$ lies in a $\PP^4$-scroll over $\PP^1$.  Each $\PP^4$ intersects $W_P$ in a surface $\bar B_P$, in this case a quartic Del Pezzo surface.  There are no isolated points and no triple points in $\RP$, so there is an equivalence  $h_{W_P}\cong -K_{X_P}$ on $X_P$ (where $h_{W_P}$ is the pullback of the class of a hyperplane section on $W_P$ to $X_P$), and the general surface $S$ equivalent to $h_{W_P}$ is a smooth quartic surface containing $\RP$, the union of a $5$-cycle of lines and a disjoint line $L_P$.  The  $3$-fold $W_P$ therefore has at most isolated singularities: it  is a $3$-fold of degree $14$ and any smooth curve section is a tetragonal canonical curve.  
The adjoint surface $A_P$ is a cubic surface through $\RP$ that is mapped to $\bar A_P$ in a $\PP^5\cap W_P$.  The plane sections $B_P\cap A_P$ that contain $L_P$ are mapped to lines in  $\bar A_P$, so $\bar A_P$ is a rational normal quartic scroll.  
Moreover, the image $\bar D$ of a general polytopal surface $D \in \Gamma_P$ 
is a non-minimal $K3$-surface with two $(-1)$-lines of degree $18$  and genus $11$. 
%with Picard rank $?+2$. 
The surface $D$ is the projection of $\bar D$ from $\mathrm{span}\lbrace \bar A_P\rbrace$, a $\PP^5$ that contains the two exceptional lines and intersects $\bar D$ in five more points.
 \hfill$\diamondsuit$
\end{example} 

\begin{example}
When $P$ is a hexagonal prism ($d=8$; see row $8$ in Table~\ref{tab:polytopes}), then 
computations with \texttt{Macaulay2} show that 
dim\;$\Gamma_P=3$ and that the ideal of $W_P\subset\PP^{11}$ is  generated by $22$ quadrics and $4$ cubics.
Residual to $W_P$ in the quadrics is a rational $\PP^2$-scroll of degree $5$ that intersects $W_P$ in  the image $\bar A_P$ of the adjoint surface,  a $K3$-surface of degree $12$ and genus $7$.
Indeed, the $3$-fold $W_P\subset \PP^{11}$ lies in a $\PP^5$-scroll over $\PP^1$.  Each $\PP^5$ of this scroll intersects $W_P$ in the image of a surface $B_P$, in this case a surface of degree $7$ and genus $3$. The adjoint surface $A_P$ is a quartic surface, and $\bar A_P$ in a $\PP^7\cap W_P$  is a $K3$-surface of degree $12$ and genus $7$.   The plane sections $B_P\cap A_P$ that contain $L_P$ are mapped to plane cubic curves in  $\bar A_P$.  So $\bar A_P$ lies in a rational normal $3$-fold scroll of degree $5$ in  a $\PP^7$ and this scroll is contained in every quadric that contains $W_P$. 
Moreover, the image $\bar D$ of a general polytopal surface $D \in \Gamma_P$ 
is a minimal elliptic surface of degree $26$ and genus $17$. % with Picard rank $2 ?$. 
The surface $D$ is the projection of $\bar D$ from $\mathrm{span}\lbrace \bar A_P\rbrace$, a $\PP^7$ that contains two plane cubic curves and intersects $\bar D$ in six more points.
 \hfill$\diamondsuit$
\end{example} 
 
\subsection{Non-prisms}
\label{ssec:Nprisms}
\begin{example}
When $P$ is a non-cube simple polytope with six facets (see row $4$ in Table~\ref{tab:polytopes}), then 
computations with \texttt{Macaulay2} show that 
dim\;$\Gamma_P=17$ and that the ideal of  $W_P\subset\PP^7$  is generated by $7$ quadrics.
The residual arrangement $\RP$ consists of three lines $L_1,L_2,L_3$ where $L_1$ and $L_3$ are skew lines that both intersect $L_2$, and two isolated points that span a line $L_0$ that does not intersect any of the other three lines $L_1, L_2, L_3$.   
%A general  Wachspress coordinate defines a smooth cubic surface in $\PP^3$.  These coordinates define a morphism $\omega_P$ on the blowup $X$ of $\PP^3$ along $\RP$.   On $X$ the union of the lines through each of the two points that meet the lines $L_i$ form altogether six planes that are contracted to lines in $W_P$.
 The adjoint surface $A_P$ is a smooth quadric surface that contains $\RP$. 
 It is mapped by $\omega_P$ to a smooth quadric surface in a $\PP^3\subset \PP^7$.    
The planes through $L_0$ are mapped to Del Pezzo surfaces of degree $4$ in~$W_P$.  Each one of them is a complete intersection of two quadrics in a $\PP^4$, so $W_P$ is contained in a five-dimensional rational normal $\PP^4$-scroll  over $\PP^1$ of degree $3$ in $\PP^7$. 
 %  The line $L_0$ is mapped to a line $\bar L_0$ in $W_P$, so the scroll $Y$ is the cone over $\PP^1\times \PP^2$ with vertex $\bar L_0$.
% A $\PP^6$ section of $W_P$ that contains one of these Del Pezzo surfaces, contains residually the image of a quadric surface through $L_1\cup L_2\cup L_3$, a rational normal scroll of degree $4$.
%In the scroll $Y$ the threefold $W_P$ is linked in two quadric sections to the union of two quadric threefolds.
Moreover, the image $\bar D$ of a general polytopal surface $D \in \Gamma_P$ 
 is a non-minimal $K3$-surface with two $(-1)$-lines of degree $14$ and genus $9$. The surface $D$ is the projection of $\bar D$ from $\mathrm{span}\lbrace \bar A_P\rbrace$, a $\PP^3$ that is spanned by the exceptional lines and intersects $\bar D$ in two more points.
  \hfill$\diamondsuit$
\end{example}

\begin{example}
When $P$ is a simple polytope with seven facets, including two hexagonal facets (see row $7$ in Table~\ref{tab:polytopes}), then  
computations with \texttt{Macaulay2} show that 
dim\;$\Gamma_P=4$ and that the ideal of $W_P\subset\PP^9$ is generated by $12$ quadrics and $3$ cubics.  
Residual to $W_P$ in the quadrics is a Segre threefold, $\PP^1\times \PP^2$, that intersects $W_P$ in  the surface $\bar A_P$,  a rational surface of degree $7$ and genus $3$.
The residual arrangement $\RP$ consists of six lines and three collinear isolated points. % $q_1,q_2,q_3$. 
%The lines are lines of intersection of five of the planes in $P$, and form a curve $C_0$ of arithmetic genus $3$.  The isolated points all lie on the line of intersection of the remaining two planes of $P$, the planes of hexagonal facets.  Three lines in $C_0$ pass trough the same point and span $\PP^3$, so any surface that contain $C_0$ is singular at this point.
%Still, $C_0$ is determinantal, defined by the $3\times 3$-minors of a $3\times 4$-matrix with linear entries.   
%Since $P$ is simple, the line $L$ of the three isolated points in $\RP$ does not intersect $C_0$, so the  adjoint surface does not contain that line.    A general  Wachspress coordinate defines a nodal quartic surface in $\PP^3$.  
Let $L$ be the line containing the three isolated points. 
The Wachspress map $\omega_P$ maps the  line $L$ to a line $\bar L$ in $W_P$.
 The planes through $L$ generate a pencil.  The residual arrangement $\RP$ intersects each of these planes in $9$ points.   These planes are mapped by $\omega_P$ to rational surfaces  of degree $7$ and genus $3$ that each span a $\PP^5$.   Any two of these surfaces span $\PP^9$.  Therefore $W_P$ is contained in a rational normal  $\PP^5$-scroll  over $\PP^1$,  which has degree $4$ and is a cone with vertex $\bar L$ over a $\PP^1\times \PP^3$.
The unique  plane cubic curve through the nine points is the intersection of each of these planes with the adjoint surface $A_P$.  
%The unique cubic plane cucurve These planes are, by $\omega_P$ mapped to surfaces of degree $7$ and genus $3$
These plane cubic curves are mapped by $\omega_P$ to plane cubic curves in $W_P$. 
% The adjoint surface $A_P$, is a nodal cubic surface that contains $\RP$.  It is mapped by $\omega_P$ to a surface of degree $7$ and genus $3$ in a $\PP^5\subset \PP^9$. 
% The $\PP^5$ span of the adjoint surface intersects $Y$ in a Segre threefold scroll $\PP^1\times \PP^2$;  the planes in the Segre threefold  intersect the adjoint surface in the plane cubic curves.
 %The quadrics in the ideal of $W_P$ therefore all contain  this Segre threefold. 
Moreover, the image $\bar D$ of a general polytopal surface $D \in \Gamma_P$ 
  is a minimal elliptic surface of degree $22$ and genus~$15$. % with Picard rank $2$. 
The surface $D$ is the projection of $\bar D$ from $\mathrm{span}\lbrace \bar A_P\rbrace$, a $\PP^5$ spanned by two plane cubic curves that intersects $\bar D$ in three more points.
 \hfill$\diamondsuit$
\end{example} 
\begin{example}
When $P$ is obtained by cutting off a vertex of a perturbed cube (see row $5$ in Table~\ref{tab:polytopes}), then 
computations with \texttt{Macaulay2} show that 
dim\;$\Gamma_P=7$ and  the ideal of $W_P\subset\PP^9$ is generated by $14$ quadrics.
%$\bar A_P$ is a smooth Del Pezzo surface of degree $5$ in a $\PP^5$.
%Indeed, $\RP$ consists of one isolated point and  six lines with no triple points.  
%The lines form a curve $C$ of arithmetic genus $1$, while the point does not lie on any trisecant line to $C$.  
%There are at most two lines through any point on $C$.   %A general  Wachspress coordinate defines a smooth quartic surface in $\PP^3$.  
 %These coordinates define a morphism $\omega_P$ on the blowup $X$ of $\PP^3$ along $\RP$.  
% There are three lines $ L_1,L_2,L_3$ in $C$ that are intersections of a plane of a quadrangle and the plane of a pentagon.  A plane through $L_i$ is intersected by  four lines of $C$ outside $L_i$.  This plane is mapped by $\omega_P$ to a Del Pezzo quintic surface in a $\PP^5$ in $\PP^9$.  Any two in the same pencil spans $\PP^9$, so $W_P$ is contained in three rational normal $\PP^5$-scrolls $Y_1,Y_2,Y_3$ over $\PP^1$, each of codimension $3$, so of degree $4$.  
% \begin{remark} A Del Pezzo surface of degree $5$ is the intersection of  two  Segre threefolds inside a quadric in $\PP^5$, so it is natural to suggest that $W_P=Y_1\cap Y_2\cap Y_3$, i.e. that each intersection $Y_i\cap Y_j$ is a scroll that intersects each $\PP^5$ of either $Y_i$ or $Y_j$ in a Segre threefold.  
%\end{remark}
The adjoint surface $A_P$ is a smooth cubic surface that contains $\RP$.  It is mapped by $\omega_P$ to a Del  Pezzo surface of degree $5$ in a $\PP^5\subset \PP^9$.  
Moreover, the image $\bar D$ of a general polytopal surface $D \in \Gamma_P$ 
is a non-minimal $K3$-surface with three $(-1)$-lines of degree $19$  and genus $12$. %with Picard rank $?+3$. 
The surface $D$ is the projection of $\bar D$ from $\mathrm{span}\lbrace \bar A_P\rbrace$, a $\PP^5$ that is spanned by the three exceptional lines and intersects $\bar D$ in three more points.
 \hfill$\diamondsuit$
\end{example} 

\begin{example}
When $P$ is a non-prism simple polytope with eight facets, none of which are triangles (see last row of Table~\ref{tab:polytopes}), then 
computations with \texttt{Macaulay2} show that 
dim\;$\Gamma_P=1$ and that the ideal of $W_P\subset\PP^{11}$ is generated by $22$ quadrics.
The residual arrangement $\RP$ consists of ten lines with no triple points that form a curve $C$ of arithmetic genus $7$.   
%A general  Wachspress coordinate defines a smooth quintic surface in $\PP^3$.  These coordinates define a morphism $\omega_P$ on the blowup $X$ of $\PP^3$ along $\RP$ onto $W_P\subset \PP^{11}$.  
There are two lines in $C$ that are intersections of two planes of  pentagons. Let $L$ be one of these lines.  Then $L$ intersects two other lines in $C$, so a plane through $L$ is intersected by seven lines of $C$ outside $L$.  This plane is mapped by $\omega_P$ to a surface of degree $9$ and genus $3$ in a $\PP^7$ in $\PP^{11}$.
The images of any two planes in the same pencil span the whole $\PP^{11}$, so $W_P$ is contained in two rational normal $\PP^7$-scrolls  over $\PP^1$, each of codimension $3$, so of degree $4$.  For each line in $C$ of intersection between a plane of a quadrangle and plane of a pentagon there is a similar associated  $\PP^8$-scrolls  over $\PP^1$ that contains $W_P$, each of codimension $2$, so of degree $3$.
% and for each line in $C$ of intersection between two planes of a quadrangles there is a similar associated  $\PP^9$-scrolls  over $\PP^1$ that contains $W_P$, each of codimension $2$, so of degree $3$. 
%It is natural to suggest that $W_P=Y_1\cap Y_2\cap Y_3\cap Y_4$, i.e. that each intersection $Y_i\cap Y_j\cap Y_k$ is a scroll that intersects each $\PP^6$ of either $Y_i, Y_j$ or $Y_k$ in a threefold.  
The adjoint surface $A_P$ is a smooth quartic surface that contains $\RP$.
  It is mapped by $\omega_P$ to a surface of degree $12$ and genus $7$ in a $\PP^7\subset \PP^{11}$.  
Moreover, the image $\bar D$ of a general polytopal surface $D \in \Gamma_P$ 
 is a non-minimal $K3$-surface with four $(-1)$-lines of degree $24$  and genus $15$.
%with Picard rank $?+4$. 
The surface $D$ is the projection of $\bar D$ from $\mathrm{span}\lbrace \bar A_P\rbrace$, a $\PP^7$ that is spanned by the four exceptional lines and intersects $\bar D$ in four more points.
 \hfill$\diamondsuit$
\end{example}

\bigskip

\paragraph{Acknowledgements.}
This material is based upon work supported by the National Science Foundation under Grant No. DMS-1439786 while the authors were in residence at the Institute for Computational and Experimental Research in Mathematics in Providence, RI, during the Fall 2018 semester.
We are grateful to Rainer Sinn, Frank Sottile, Hal Schenck and Dmitrii Pasechnik for many insightful discussions.
We also thank the anonymous referees for helpful suggestions that improved our exposition. 

\bigskip

% Authors must disclose all relationships or interests that 
% could have direct or potential influence or impart bias on 
% the work: 
%
% \section*{Conflict of interest}
%
% The authors declare that they have no conflict of interest.

% BibTeX users please use one of
%\bibliographystyle{spbasic}      % basic style, author-year citations
%\bibliographystyle{spmpsci}      % mathematics and physical sciences
%\bibliographystyle{spphys}       % APS-like style for physics
%\bibliography{}   % name your BibTeX data base

% Non-BibTeX users please use

\end{document}